 \titleformat{\section}{\bfseries\Large}{\appendixname~\thesection:}{0.5em}{}%
 \titleformat{\subsection}{\bfseries\large}{\thesubsection}{0.5em}{}%
\DeclareMathAlphabet{\mathcal}{OMS}{cmsy}{m}{n}
\definecolor{myyellow}{RGB}{200, 200, 0}
\newtheorem{theorem}{Theorem}
\numberwithin{theorem}{section}
\newtheorem{prop}[theorem]{Proposition}
\newtheorem{lemma}[theorem]{Lemma}
\newtheorem{corollary}[theorem]{Corollary}
\theoremstyle{definition}
\newtheorem{definition}[theorem]{Definition}
\newtheorem*{remark}{Remark}
\newtheorem*{warn}{Warning}
\newtheorem{example}{Example}
\newtheorem{assumption}{Assumption}
\newtheorem{conjecture}{Conjecture}
\newcommand{\ext}{\mathrm{Ext}}
\newcommand{\bs}{\mathrm{BS}}
\newcommand{\kb}{\mathbbm{k}}
\renewcommand{\hom}{\mathrm{Hom}}
\newcommand{\im}{\mathrm{im } \,}
\newcommand{\gf}{\mathfrak{g}}
\newcommand{\uhom}{\underline{\mathrm{Hom}}}
\newcommand{\Ac}{\mathcal{A}}
\newcommand{\Cc}{\mathcal{C}}
\newcommand{\Oc}{\mathcal{O}}
\newcommand{\Mc}{\mathcal{M}}
\newcommand{\Uc}{\mathcal{U}}
\newcommand{\hf}{\mathfrak{h}}
\newcommand{\id}{\mathrm{id}}
\newcommand{\Db}{\mathbb{D}}
\newcommand{\Cb}{\mathbb{C}}
\newcommand{\jw}{\mathrm{JW}}
\newcommand{\sbim}{\mathbb{S}\textnormal{Bim}}
\newcommand\scalemath[2]{\scalebox{#1}{\mbox{\ensuremath{\displaystyle #2}}}}
\newcommand{\sbrac}[1]{\left[#1 \right]}
\newcommand{\abrac}[1]{\left\langle#1\right\rangle}
\newcommand{\paren}[1]{\left( #1 \right)}
\newcommand{\set}[1]{\left \{ #1 \right \}}
\newcommand{\un}[1]{\underline{#1}}
\newcommand{\hh}{\mathrm{HH}}
\newcommand{\ft}{\mathrm{FT}}
\newcommand{\hhh}{\mathrm{HHH}}
\newcommand{\h}{\mathrm{H}}
\newcommand{\cone}{\mathrm{Cone}}
\newcommand{\Zb}{\mathbb{Z}}
\newcommand\dboxed[1]{
\raisebox{-1ex}{
\begin{tikzpicture}
         \node[draw,dashed] {\text{$#1$}}; 
   \end{tikzpicture}}
}
\newcommand{\xequals}[1]{\overset{#1}{=\joinrel=\joinrel=}}
\newcommand{\xcong}[1]{\overset{#1}{\cong}}
\newcommand{\xsim}[1]{\overset{#1}{\simeq}}
\DeclareRobustCommand\widecheck[1]{{\mathpalette\@widecheck{#1}}}
\def\@widecheck#1#2{%
    \setbox\z@\hbox{\m@th$#1#2$}%
    \setbox\tw@\hbox{\m@th$#1%
       \widehat{%
          \vrule\@width\z@\@height\ht\z@
          \vrule\@height\z@\@width\wd\z@}$}%
    \dp\tw@-\ht\z@
    \@tempdima\ht\z@ \advance\@tempdima2\ht\tw@ \divide\@tempdima\thr@@
    \setbox\tw@\hbox{%
       \raise\@tempdima\hbox{\scalebox{1}[-1]{\lower\@tempdima\box
\tw@}}}%
    {\ooalign{\box\tw@ \cr \box\z@}}}
\tikzset{
uni/.style={circle,fill,draw,inner sep=0mm,minimum size=1mm},
  midarrow/.style={postaction={decorate,decoration={markings,mark=at position #1 with {\arrow{>}}}}},
  midarrow/.default=0.5,
  midarrowrev/.style={postaction={decorate,decoration={markings,mark=at position #1 with {\arrow{<}}}}},
  midarrowrev/.default=0.5,
  dot/.style={circle,fill,draw,inner sep=0mm,minimum size=1.3mm},  
  rdot/.style={circle,fill, color=red, draw,inner sep=0mm,minimum size=1.3mm},  
  bdot/.style={circle,fill, color=blue, draw,inner sep=0mm,minimum size=1.3mm},  
  pdot/.style={circle,color=Violet,fill=Violet,draw,inner sep=0mm,minimum size=1.3mm},  
  hdot/.style={circle,fill=white,draw,inner sep=0mm,minimum size=1.3mm}, 
  rhdot/.style={circle,color=red,fill=white,draw,inner sep=0mm,minimum size=1.3mm, }, 
  bhdot/.style={circle,color=blue,fill=white,draw,inner sep=0mm,minimum size=1.3mm, }, 
  yhdot/.style={circle,color=myyellow,fill=white,draw,inner sep=0mm,minimum size=1.3mm, }, 
  rkhdot/.style={circle,color=red,fill=white,draw,inner sep=0mm,minimum size=4mm, }, 
  rbkhdot/.style={circle,color=red,fill=white,draw,inner sep=0mm,minimum size=6mm, }, 
  rbbkhdot/.style={circle,color=red,fill=white,draw,inner sep=0mm,minimum size=6.7mm, }, 
  btridot/.style={rectangle,color=blue,fill=white,draw,inner sep=0.3mm,minimum size=1.7mm}, 
    rtridot/.style={rectangle,color=red,fill=white,draw,inner sep=0.3mm,minimum size=1.7mm}, 
  every picture/.style=thick
}
\newsavebox\lowerdot
\savebox\lowerdot{%
\begin{tikzpicture}[scale=0.3,thick,baseline]
 \draw (0,-0.5) to (0,0.5);
 \node at (0,-0.5) {$\bullet$};
\end{tikzpicture}%
}
\newsavebox\upperdot
\savebox\upperdot{%
\begin{tikzpicture}[scale=0.3,thick,baseline]
 \draw (0,-0.5) to (0,0.5);
 \node at (0,0.5) {$\bullet$};
\end{tikzpicture}%
}
\newcommand{\rzero}{\ \raisebox{-2ex}{\begin{tikzpicture}[xscale=0.25,yscale=0.3,thick,baseline]
 \draw[red] (0,0) -- (0,2.5); 
\end{tikzpicture}}}
\newcommand{\bzero}{\ \raisebox{-2ex}{\begin{tikzpicture}[xscale=0.25,yscale=0.3,thick,baseline]
 \draw[blue] (0,0) -- (0,2.5); 
\end{tikzpicture}}}
\newcommand{\ds}{\ \raisebox{-2ex}{\begin{tikzpicture}[xscale=0.25,yscale=0.3,thick,baseline]
 \draw[red] (0,0) -- (0,2);
 \node[rtridot] at (0,1) {$\scalemath{0.9}{d_s}$};
\end{tikzpicture}}}
\newcommand{\dt}{\ \raisebox{-2ex}{\begin{tikzpicture}[xscale=0.25,yscale=0.3,thick,baseline]
 \draw[blue] (0,0) -- (0,2); 
 \node[btridot] at (0,1) {$\scalemath{0.9}{d_t}$};
\end{tikzpicture}}}
\newcommand{\rhunit}{\ \raisebox{0ex}{\begin{tikzpicture}[xscale=0.25,yscale=0.3,thick,baseline]
 \draw[red] (0,0) -- (0,1);
 \node[rhdot] at (0,0) {};
\end{tikzpicture}}}
\newcommand{\runit}{\ \raisebox{0ex}{\begin{tikzpicture}[xscale=0.25,yscale=0.3,thick,baseline]
 \draw[red] (0,0.5) -- (0,1.5);
 \node[dot, red] at (0,0.5) {};
\end{tikzpicture}}}
\newcommand{\runitd}{\ \raisebox{-1ex}{\begin{tikzpicture}[xscale=0.25,yscale=0.3,thick,baseline]
 \draw[red] (0,0.5) -- (0,1.5);
 \node[dot, red] at (0,0.5) {};
\end{tikzpicture}}}
\newcommand{\bunit}{\ \raisebox{0ex}{\begin{tikzpicture}[xscale=0.25,yscale=0.3,thick,baseline]
 \draw[blue] (0,0.5) -- (0,1.5);
 \node[dot, blue] at (0,0.5) {};
\end{tikzpicture}}}
\newcommand{\bcounit}{\ \raisebox{0ex}{\begin{tikzpicture}[xscale=0.25,yscale=0.3,thick,baseline]
 \draw[blue] (0,-1) -- (0,0);
 \node[dot, blue] at (0,0) {};
\end{tikzpicture}}}
\newcommand{\rcup}{\ \raisebox{0ex}{\begin{tikzpicture}[xscale=0.25,yscale=0.3,thick,baseline]
 \draw[red] (-0.7,1)  .. controls (-0.9, 0.3) and (-0.4, -0.4) .. (0, -0.4);
 \draw[red] (0.7,1)  .. controls (0.9, 0.3) and (0.4, -0.4) .. (0, -0.4);
\end{tikzpicture}}}
\newcommand{\rhcup}{\ \raisebox{0ex}{\begin{tikzpicture}[xscale=0.25,yscale=0.3,thick,baseline]
 \draw[red] (-0.7,1)  .. controls (-0.9, 0.3) and (-0.4, -0.4) .. (0, -0.4);
 \draw[red] (0.7,1)  .. controls (0.9, 0.3) and (0.4, -0.4) .. (0, -0.4);
 \node[rhdot] at (0,-0.4) {};
\end{tikzpicture}}}
\newcommand{\rect}{
\begin{tikzpicture}[scale=0.3]
   \draw (0,0) rectangle (2.4,0.6);
\end{tikzpicture} 
}
\title{\textbf{Serre Duality and the Whitehead Link}}
\date{\relax }
\author{Cailan Li}
\begin{document}

\maketitle
\begin{abstract}
   We prove the action of the full twist  is a Serre functor in the homotopy category of dihedral Soergel Bimodules. Our proof relies on the representability of a certain partial trace functor, which we prove using the diagrammatics for Hochschild cohomology of Soergel Bimodules. Leveraging these representability results, we then compute the triply-graded link homology of the Whitehead link. 
\end{abstract}

\tableofcontents


\section{Introduction}

Categorification has become an essential perspective in modern representation theory and low-dimensional topology. By replacing algebraic structures with richer categorical analogues, one obtains invariants and structures that reflect deeper geometric or homological phenomena. 

\subsection{History}
The study of braid group actions on triangulated categories has a long and well-documented history, bridging together representation theory, topology, and geometry, see \cite{ST01}, \cite{KS02}, \cite{RZ03}. Seeking to categorify such actions, Rouquier \cite{Rou04} constructs a categorification of the braid group $B_W$ associated to a Coxeter group $W$, which associates to $\beta\in W$ a (Rouquier) complex $F^\bullet_\beta\in K^b(\sbim(\hf, W))$ in the homotopy category of Soergel bimodules. For $W=S_n$, Khovanov \cite{Kho07} applies the functor of Hochschild cohomology to $F^\bullet_\beta$, obtaining a complex whose cohomology yields a triply-graded link homology $\overline{\hhh}$ categorifying the HOMFLY–PT polynomial. \\

In \cite{Kal09}, Kálmán established a relation between the “top” and “bottom” parts of the HOMFLY–PT polynomial, expressed in terms of the full twist braid $\ft_n$ on $n$ strands. This relationship was subsequently categorified by Gorsky, Hogancamp, Mellit, Nakagane in \cite{GHMN19}, where the authors proved an analogue of Serre duality for $ K^b(\sbim(\Cb^n, S_n))$. A central step in their argument involved a form of relative Serre duality, which they further conjectured should hold for arbitrary Coxeter groups $(W, S)$ and realizations $\hf$. The setting is as follows.

\begin{itemize}
    \item $R:=\mathrm{Sym}^\bullet (V^*(-2))$ and $R^e=R\otimes_\kb R$. $R^e-\mathrm{gmod}$ are the same as graded $R$ bimodules. 
    \item $\sbim(\hf, W)$ denotes the full subcategory of $R^e-\mathrm{gmod}$ defined in \cite[Section 3.4]{SC} consisting of the Soergel Bimodules for $W$ associated to the realization $\hf$. It is monoidally generated by $B_s, \ s\in S$. 
    \item Given a subset $I\subset S$, the full, graded, monoidal, idempotent complete subcategory of $\sbim(\hf, W)$ generated by $B_s, s\in I$ is isomorphic to $\sbim(\hf, W_I)$.
    \item $\ft_S:=F^\bullet_{w_0}\otimes_R F^\bullet_{w_0} $ where $F^\bullet_{w_0}$ is the Rouquier complex associated to the longest element in $W$. $\ft_{S/I}:=\ft_{S}\otimes_R\ft_{I}^{-1}\cong \ft_{I}^{-1}\otimes_R\ft_{S}$.
\end{itemize}

\begin{conjecture}[Relative Serre Duality]
\label{ghmnconj}
   Given a realization $\hf$ of $W$, let $\pi_L, \pi_R:K^b(\sbim(\hf, W))\to  K^b(\sbim(\hf, W_I))$ be the left and right adjoints to the inclusion functor $K^b(\sbim(\hf, W_I))\to K^b(\sbim(\hf, W))$. Then 
    $$\pi_R(X^\bullet)\simeq \pi_L( \ft_{S/I}\otimes_R X^\bullet )\in K^b(\sbim(\hf,W_I))$$
    for all $X^\bullet\in K^b(\sbim(\hf,W))$. 
\end{conjecture}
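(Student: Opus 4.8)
The plan is to deduce Conjecture~\ref{ghmnconj} from an \emph{absolute} Serre-duality statement --- that the full twist $\ft_S$, with the shift dictated by the chosen normalization of $F^\bullet_{w_0}$, implements Serre duality on $K^b(\sbim(\hf,W))$ --- together with a formal manipulation of the parabolic adjunctions. The formal input is the general principle that for a functor $\iota\colon\mathcal{D}\to\mathcal{C}$ of triangulated categories with Serre functors $\mathbb{S}_{\mathcal{D}},\mathbb{S}_{\mathcal{C}}$ and adjoints $\pi_L\dashv\iota\dashv\pi_R$, chaining the two Serre isomorphisms through the $\pi_L\dashv\iota$ adjunction produces a natural isomorphism $\pi_R\simeq\mathbb{S}_{\mathcal{D}}\circ\pi_L\circ\mathbb{S}_{\mathcal{C}}^{-1}$. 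Applying this to $\iota\colon K^b(\sbim(\hf,W_I))\to K^b(\sbim(\hf,W))$, substituting $\mathbb{S}_{\mathcal{C}}^{\pm1}\simeq(-)\otimes_R\ft_S$ and $\mathbb{S}_{\mathcal{D}}^{\pm1}\simeq(-)\otimes_R\ft_I$ up to shift (sign convention as in this paper), and using that $\ft_S$ is central, that $\ft_{S/I}=\ft_S\otimes_R\ft_I^{-1}$, and that $\ft_I$ is an invertible --- hence rigid --- object of $K^b(\sbim(\hf,W_I))$ so that the projection formula $\pi_L(\iota(A)\otimes_R Y^\bullet)\simeq A\otimes_R\pi_L(Y^\bullet)$ holds, the two copies of $\ft_I$ cancel and one obtains $\pi_R(X^\bullet)\simeq\pi_L(\ft_{S/I}\otimes_R X^\bullet)$. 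The bookkeeping to watch is that the homological and internal shifts in $\mathbb{S}_{\mathcal{C}}$ and $\mathbb{S}_{\mathcal{D}}$ --- which depend only on $\ell(w_0)$, $\ell(w_{0,I})$ and the rank of $\hf$ --- cancel, which is exactly what the normalization in $\ft_{S/I}:=\ft_S\otimes_R\ft_I^{-1}$ is designed to ensure; and, since $\hom$-spaces in $K^b(\sbim(\hf,W))$ are finitely generated over $R^W$ rather than finite-dimensional, ``Serre functor'' must be understood in the appropriate graded sense, so this step really reduces the conjecture to a representability statement.

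\textbf{The representability step.} That $\ft_S$ implements Serre duality on $K^b(\sbim(\hf,W))$ is equivalent to a representability statement for a \emph{partial trace} functor: the operation that ``closes off the $S\setminus I$-colored strands'' of a Soergel bimodule complex should be representable, which simultaneously identifies it with the right adjoint $\pi_R$ and, after a twist by $\ft_{S/I}$, with $\pi_L(\ft_{S/I}\otimes_R-)$. In dihedral type this can be attacked head-on: $w_0=sts\cdots$ has a unique reduced word, so $F^\bullet_{w_0}$, $\ft_S$ and $\ft_{S/I}$ are explicit bounded complexes of two-colored Bott--Samelson bimodules, and the Hochschild cohomology groups $\hh^\bullet(\bsw)$ together with the maps between them admit a purely combinatorial cup/cap/dot description --- the two-colored Temperley--Lieb / Jones--Wenzl calculus recorded by this paper's $\jw$-box diagrammatics and the associated isotopy relations. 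The plan is then to (i) write down, diagrammatically, the complex computing $\pi_R(\bsw)$ as a complex of Bott--Samelson bimodules for $W_I$; (ii) do the same for $\pi_L(\ft_{S/I}\otimes_R\bsw)$; and (iii) exhibit an explicit homotopy equivalence between the two by a sequence of Gaussian eliminations, with the requisite cancellations forced by the dihedral Jones--Wenzl relations.

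\textbf{Main obstacle and conclusion.} The crux is step (iii): the two complexes grow in length with $\ell(w_0)$, the homotopy equivalence is delicate to pin down, and the whole argument is ultimately two-color Temperley--Lieb combinatorics, which does not obviously survive in higher rank --- this is why Conjecture~\ref{ghmnconj} remains open beyond the dihedral case. With (i)--(iii) in hand for dihedral $W$, the formal reduction of the first paragraph upgrades them to the full statement $\pi_R(X^\bullet)\simeq\pi_L(\ft_{S/I}\otimes_R X^\bullet)$ for every dihedral Coxeter system $(W,S)$ and every realization $\hf$; and these same representability results are precisely the input needed for the computation of $\overline{\hhh}$ of the Whitehead link.
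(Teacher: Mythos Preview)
Your proposal inverts the paper's logical flow and contains an internal circularity. The paper does \emph{not} assume absolute Serre duality and deduce the relative version; it proves the relative statement (Theorem~\ref{relsertheorem}) directly, and only \emph{afterwards} derives absolute Serre duality (Theorem~\ref{maintheorem}) from it. Your first paragraph therefore reduces Conjecture~\ref{ghmnconj} to a statement that, in the dihedral setting, is strictly harder and in fact is the paper's final output. And your ``representability step'' does not actually supply that missing absolute statement: your plan (i)--(iii) compares $\pi_R(\bsw)$ with $\pi_L(\ft_{S/I}\otimes_R\bsw)$, which is the \emph{relative} statement on generators, not absolute Serre duality. So either (i)--(iii) already proves the conjecture and the formal reduction is dead weight, or you still owe an independent proof of $\ft_S$ being a Serre functor --- and none is given.

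More importantly, the paper's method for the relative statement is structurally different from your brute-force plan (i)--(iii). The paper does not attempt a term-by-term homotopy equivalence between $\pi_R(\bsw)$ and $\pi_L(\ft_{S/I}\otimes_R\bsw)$. Instead it (a) identifies $\pi_s^\pm$ concretely as the kernel and cokernel of multiplication by $\rho_s^e=\rho_s\otimes1-1\otimes\rho_s$, and computes $\pi_s^\pm(B_w)$ on all indecomposables (\cref{partialprop}); (b) uses these to prove the vanishing $\pi_s^+(F_w)\simeq 0$ for $w\neq e,t$ (\cref{pis0}, \cref{preprop}), yielding semi-orthogonal decompositions $K^b(\sbim(\hf,I_2(m)))\simeq(\Uc_t^-\to K^b(\sbim(\hf,\langle t\rangle)))\simeq(K^b(\sbim(\hf,\langle t\rangle))\to\Uc_t^+)$; (c) shows $\ft_{\{s,t\}/t}\otimes_R(-)$ carries $\Uc_t^-$ to $\Uc_t^+$; and (d) makes the single computation $\pi_s^+(\ft_{\{s,t\}/t})\simeq R$ via link-splitting triangles. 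The relative statement then falls out in one line by bilinearity of $\pi_s^+$ over $\sbim(\hf,\langle t\rangle)$: writing $X^\bullet$ in its semi-orthogonal triangle and tensoring with $\ft_{\{s,t\}/t}$ kills the $\Uc_t^-$ piece after applying $\pi_s^+$, leaving $\pi_s^-(X^\bullet)\simeq\pi_s^+(\ft_{\{s,t\}/t})\otimes_R\pi_s^-(X^\bullet)\simeq\pi_s^+(\ft_{\{s,t\}/t}\otimes_R X^\bullet)$. None of these ingredients --- the $\rho_s^e$ description of the adjoints, the vanishing on Rouquier complexes, the semi-orthogonal machinery, or the pivotal computation $\pi_s^+(\ft_{\{s,t\}/t})\simeq R$ --- appear in your outline, and they are what makes the argument tractable compared to the Gaussian-elimination marathon you anticipate in step~(iii).
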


In the case $W=S_n$, with $\hf=\Cb^n$ and $W_I=S_{r}\times (S_1)^{n-r}$, this conjecture was established in \cite{GHMN19}. For Weyl Groups and their associated Cartan realizations, this was recently shown by Ho and Li in \cite{HL25}.

\subsection{Main Results}

In this paper we prove \cref{ghmnconj} for $W=I_2(m)=\abrac{s,t|\, s^2=t^2=(st)^m=e}$, the dihedral group, for any symmetric realization of $I_2(m)$ satisfying the non-degeneracy conditions stated in \cref{asump1} and \cref{asump2}. This furnishes the first non-crystallographic instance of the phenomenon. As a consequence, we show that tensoring with the complex $\ft_{\set{s,t}}$ defines a Serre functor on $K^b(\sbim(\hf, I_2(m)))$; namely

\begin{theorem}[Serre Duality]
\label{maintheorem}
    For any two complexes $X^\bullet, Y^\bullet \in K^b(\sbim(\hf, I_2(m)))$ one has the following homotopy equivalences of complexes right (or left) $R-$modules.
\[
\uhom_{R^e}(X^\bullet,Y^\bullet) \simeq \uhom_{R^e}(Y^\bullet, \mathrm{FT}_{\set{s,t}}^{-1} \otimes_R X^\bullet)^\vee \simeq \uhom_{R^e}(\mathrm{FT}_{\set{s,t}} \otimes_R Y^\bullet, X^\bullet)^\vee 
\]
\end{theorem}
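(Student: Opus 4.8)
The plan is to obtain \cref{maintheorem} as the $I=\emptyset$ case of \cref{ghmnconj} for $W=I_2(m)$, that is, from the relative Serre duality established in the body by the representability machinery; the passage between the two is a formal manipulation with internal homs, duals, and the centrality of the full twist. Write $\iota$ for the inclusion into $K^b(\sbim(\hf, I_2(m)))$ of the subcategory generated by the monoidal unit, i.e. $\sbim(\hf,W_\emptyset)$, the finite-rank graded-free $R$-modules regarded as $R$-bimodules in the evident way. Two structural facts are needed. First, the right adjoint of $\iota$ is the partial trace $\pi_R(X^\bullet)\simeq\uhom_{R^e}(R,X^\bullet)$ --- the functor one applies to Rouquier complexes to compute $\hh^\bullet$ --- which lands in perfect complexes of $R$-modules because the Hochschild cohomology of dihedral Soergel bimodules is free over $R$. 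Second, every $X^\bullet$ is dualizable in this monoidal category, with dual $(X^\bullet)^\vee$ built from the (co)units exhibiting the self-biadjointness of the $B_s$; hence $\uhom_{R^e}(X^\bullet,Y^\bullet)\simeq\pi_R\!\big((X^\bullet)^\vee\otimes_R Y^\bullet\big)$, and, comparing with the left-adjoint characterisation of $\pi_L$, also $\pi_L(X^\bullet)^\vee\simeq\uhom_{R^e}(X^\bullet,R)\simeq\pi_R\!\big((X^\bullet)^\vee\big)$, where $(-)^\vee=\uhom_R(-,R)$ is the $R$-linear dual (up to an overall grading shift and twist that I suppress). In particular $\pi_L\simeq(-)^\vee\circ\pi_R\circ(-)^\vee$, and $(-)^\vee$ is an involution on perfect complexes of $R$-modules.

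Feeding the $I=\emptyset$ case of \cref{ghmnconj}, namely $\pi_R(Z^\bullet)\simeq\pi_L(\ft_{\{s,t\}}\otimes_R Z^\bullet)$ (note $\ft_{S/\emptyset}=\ft_{\{s,t\}}$), into $\pi_L\simeq(-)^\vee\pi_R(-)^\vee$, and using that $\ft_{\{s,t\}}$ is central (it is the Rouquier complex of the central full-twist braid, so $\ft_{\{s,t\}}\otimes_R-\simeq-\otimes_R\ft_{\{s,t\}}$ and $(\ft_{\{s,t\}})^\vee\simeq\ft_{\{s,t\}}^{-1}$) together with the fact that duals reverse the order of $\otimes_R$, one computes
\begin{multline*}
\uhom_{R^e}(X^\bullet,Y^\bullet)\simeq\pi_R\!\big((X^\bullet)^\vee\otimes_R Y^\bullet\big)\\
\simeq\pi_R\!\big(\ft_{\{s,t\}}^{-1}\otimes_R(Y^\bullet)^\vee\otimes_R X^\bullet\big)^\vee\simeq\uhom_{R^e}\!\big(Y^\bullet,\ft_{\{s,t\}}^{-1}\otimes_R X^\bullet\big)^\vee,
\end{multline*}
which is the first equivalence of \cref{maintheorem}. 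The second follows formally: applying the first with $(X^\bullet,Y^\bullet)$ replaced by $(\ft_{\{s,t\}}\otimes_R Y^\bullet,X^\bullet)$ gives $\uhom_{R^e}(\ft_{\{s,t\}}\otimes_R Y^\bullet,X^\bullet)\simeq\uhom_{R^e}(X^\bullet,\ft_{\{s,t\}}^{-1}\otimes_R\ft_{\{s,t\}}\otimes_R Y^\bullet)^\vee\simeq\uhom_{R^e}(X^\bullet,Y^\bullet)^\vee$, and one applies $(-)^\vee$ to both sides. The real care needed here is that every equivalence be natural in $X^\bullet$ and $Y^\bullet$ (not merely object-wise) and that the suppressed internal shifts cancel --- routine once the adjunctions and duality data are set up functorially, but bookkeeping that should not be skipped.

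The substance, and the main obstacle, is \cref{ghmnconj} itself for $I_2(m)$: the representability of $\pi_R=\uhom_{R^e}(R,-)$ in the form $\pi_R\simeq\pi_L(\ft_{\{s,t\}}\otimes_R-)$. I would reduce to generators --- both sides are exact functors on $K^b(\sbim(\hf,I_2(m)))$, so it suffices to produce the natural equivalence on the Bott--Samelson generators $B_s,B_t$ and control its propagation through tensor products and cones --- which forces an explicit grip on the minimal (Rouquier) complex of $\ft_{\{s,t\}}=F^\bullet_{w_0}\otimes_R F^\bullet_{w_0}$ (for the dihedral group $\ell(w_0)=m$), whose terms and differentials are governed by the two-colour Temperley--Lieb / Jones--Wenzl calculus presenting $\sbim(\hf,I_2(m))$. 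The comparison is then carried out inside the diagrammatic model for $\ext^\bullet_{R^e}(-,-)$, where the non-degeneracy hypotheses of \cref{asump1} and \cref{asump2} are exactly what make the structure maps that appear --- the dots $d_s,d_t$, the Demazure operators, the Jones--Wenzl projectors --- non-zero and invertible where needed, so that the partial traces of $B_s$ and $B_t$ can be computed and matched with the $\ft_{\{s,t\}}$-twisted trace. I expect the crux to be this matching, together with making the Hochschild-cohomology diagrammatics interact correctly with $(-)^\vee$, so that the generator-level isomorphisms assemble into a genuine natural isomorphism of functors rather than a family of unrelated equivalences.
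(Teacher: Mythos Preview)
Your formal deduction in the first two paragraphs is correct and is essentially the argument of the paper's final computation: one rewrites $\uhom_{R^e}(X,Y)$ as $\hh^0((X)^\vee\otimes_R Y)$, applies the $I=\emptyset$ relative Serre duality $\hh^0(Z)\simeq\hh_0(\ft_{\{s,t\}}\otimes_R Z)$, and then uses the duality relation $\hh_0(B)^\vee\cong\hh^0(\Db B)$ (from freeness of Hochschild cohomology over $R$, via the universal coefficient spectral sequence) together with the identity $\omega\circ\Db=(-)^\vee$ to unwind. The paper tracks the three dualities $\Db$, $\omega$, $(-)^\vee$ separately throughout this manipulation, which is exactly where the ``bookkeeping'' you flag actually lives; your use of a single symbol $(-)^\vee$ for both the monoidal dual in $\sbim$ and the $R$-linear dual on the target is harmless on objects but obscures this step.

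Where your proposal diverges from the paper is in how the input $\pi_R\simeq\pi_L(\ft_{\{s,t\}}\otimes_R-)$ is obtained. The paper does \emph{not} attack the $I=\emptyset$ case directly, nor does it reduce to checking on the generators $B_s,B_t$ against the minimal complex of $\ft_{\{s,t\}}$. Instead it factors through the maximal parabolic: first it proves relative Serre duality for $I=\{t\}$, namely $\pi_s^-(X)\simeq\pi_s^+(\ft_{\{s,t\}/t}\otimes_R X)$, and then composes with the rank-one statement $\pi_t^-(Z)\simeq\pi_t^+(F_t^2\otimes_R Z)$ inside $K^b(\sbim(\hf,\langle t\rangle))$, using $\sbim(\hf,\langle t\rangle)$-bilinearity of $\pi_s^+$ to pull $F_t^2$ across. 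The $I=\{t\}$ step is itself proved not by matching on generators but by (i) computing $\pi_s^\pm(B_w)$ explicitly for \emph{every} indecomposable $B_w$ via the diagrammatic $\ext$-calculus, (ii) deducing semi-orthogonal decompositions of $K^b(\sbim(\hf,I_2(m)))$ with one piece $K^b(\sbim(\hf,\langle t\rangle))$ and the other spanned by positive (resp.\ negative) Rouquier complexes, and (iii) using the link-splitting triangle for $F_s^2\to R$ repeatedly to collapse $\pi_s^+(\ft_{\{s,t\}/t})$ down to $\pi_s^+(R)=R$. Your proposed route --- direct control of the length-$2m$ minimal complex of $\ft_{\{s,t\}}$ and assembling a natural isomorphism from generator-level checks --- is not wrong in principle, but it is precisely the brute-force computation the paper's semi-orthogonal machinery is designed to avoid, and the step you yourself flag as the crux (upgrading object-wise agreement to a natural transformation) remains unaddressed.
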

where $\uhom_{R^e}(-,-)$ is the complex of Homs and $\vee=\Db$\footnote{This equality holds for $R-$modules, but not for arbitrary $R-$bimodules.} is the $R-$linear dual, see \cref{defd}. 

\begin{remark}
    Serre functors for $\kb-$linear categories $\Cc$ typically use the $\kb-$linear dual $(-)^*=\hom_\kb(-, \kb)$ but this requires that the $\hom$ spaces in $\Cc$ are finite $\kb$ modules which is not the case for Soergel Bimodules.  
\end{remark}

\begin{remark}
The category of Soergel modules $\mathbb{S}\mathrm{Mod}(\hf, W)$ is obtained from $\sbim(\hf, W)$ by quotienting out the right (or left) action of $R$. Its objects are of the form
\[ \overline{M}= M\otimes_R R/R^+\cong M\otimes_R \kb \qquad M\in  \sbim(\hf, W)  \]
When $W$ is a Weyl group with Lie algebra $\gf$ and $\hf_{\mathrm{Cart}}$ denotes the Cartan realization, Soergel \cite{Soe90} established the following equivalence of categories $$D^b(\Oc_0)\cong K^b( \mathbb{S}\mathrm{Mod}(\hf_{\mathrm{Cart}}, W))$$
where $\Oc_0$ is the principal block of Category $\Oc$ for $\gf$.  In this setting, $\hom$ spaces in are finite $\kb$ modules and it was shown in \cite{bbm04} and \cite{MS08} that the Serre functor for $D^b(\Oc_0)$ is given by the action of the full twist. \\

Thus, applying the functor $\otimes_R \kb$ to \cref{maintheorem}, we obtain a version of Serre duality for dihedral Category $\Oc$
\[
\uhom_{C}(\overline{X^\bullet},\overline{Y^\bullet}) = \uhom_{C}(\overline{Y^\bullet}, \mathrm{FT}_{\set{s,t}}^{-1} \otimes_R \overline{X^\bullet})^*  \qquad \forall \overline{X^\bullet}, \overline{Y^\bullet}\in K^b( \mathbb{S}\mathrm{Mod}(\hf, I_2(m)))
\] 
where $C=R/(R^W_+)$ is the coinvariant algebra. See \cite[Section 6.4]{GHMN19} for more details. 
\end{remark}

A key step in our proof of \cref{ghmnconj} for dihedral groups is the explicit computation of the left and right adjoints to the inclusion functor $\sbim(\hf, \abrac{t} )\hookrightarrow \sbim(\hf, I_2(m)) $ on objects in $\sbim(\hf, I_2(m))$. In particular, using tools from the diagrammatics of Ext groups between Soergel bimodules developed in \cite{Li25}, we show that

\begin{theorem}
   Let $w\in I_2(m)$ and let $B_w\in \sbim(\hf, I_2(m))$ be the corresponding indecomposable bimodule. Then 
   \begin{enumerate}[(a)]
        \item $\pi^{\pm}_s(R)\cong R$.
       \item $\pi^{\pm}_s(B_s)\cong R(\pm 1)$.
       \item If $w$ contains at least one $t$, $\pi_s^-(B_w)\cong B_t (-\ell(w)+1) $ and $\pi_s^+(B_w)\cong B_t (\ell(w)-1) $.
   \end{enumerate}
\end{theorem}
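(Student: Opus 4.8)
The plan is to reduce the computation of $\pi_s^{\pm}(B_w)$ to the evaluation of a few families of graded bimodule $\hom$-spaces, and then to compute those spaces through degree-zero Hochschild cohomology using the diagrammatic calculus.

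\textbf{Formal cases and the reduction.} Part (a) is formal: $R=B_e$ already lies in $\sbim(\hf,\abrac{t})\subseteq\sbim(\hf,I_2(m))$, so both adjunction (co)units are isomorphisms at $R$ and $\pi_s^{\pm}(R)\cong R$; the same remark settles the subcase $w=t$ of (c) (here $B_t$ lies in $\sbim(\hf,\abrac{t})$ and $\ell(t)-1=0$). For the rest, the key structural observation is that $\sbim(\hf,\abrac{t})$ is Krull--Schmidt whose only indecomposables up to grading shift are $R$ and $B_t$, and that the shifts of $R,B_t$ generate $K^b(\sbim(\hf,\abrac{t}))$ as a triangulated category. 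Hence, to prove that a prescribed $P\in\sbim(\hf,\abrac{t})$ together with a map $\varepsilon\colon P\to B_w$ in $\sbim(\hf,I_2(m))$ computes $\pi_s^{+}(B_w)$ — i.e.\ that the partial trace functor $X^\bullet\mapsto\hom_{K^b}(\iota(X^\bullet),B_w)$ is represented by $P$ — it suffices by dévissage (both sides are cohomological in $X^\bullet$ and the comparison map induced by $\varepsilon$ is a natural transformation) to check that postcomposition with $\varepsilon$ gives isomorphisms $\hom_{R^e}(N(j),P)\xrightarrow{\ \sim\ }\hom_{R^e}(N(j),B_w)$ for $N\in\{R,B_t\}$ and all $j\in\Zb$; the $\hom_{K^b}$-groups in nonzero homological degree vanish automatically on both sides since $P$ and $B_w$ are concentrated in homological degree $0$. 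Dually, $\pi_s^-(B_w)$ is pinned down by the spaces $\hom_{R^e}(B_w,N(j))$. So everything reduces to identifying, as graded $R$-bimodules, the families $\hom_{R^e}(R,B_w)$, $\hom_{R^e}(B_t,B_w)$, $\hom_{R^e}(B_w,R)$, $\hom_{R^e}(B_w,B_t)$ with those of the candidate representing objects.

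\textbf{Computing the $\hom$-spaces.} Using self-duality of indecomposable Soergel bimodules and biadjunction for $B_t$ (self-biadjoint up to an explicit shift), all four families become shifts of degree-zero Hochschild cohomology: $\hom_{R^e}(R,B_w)\cong\hom_{R^e}(B_w,R)\cong\hh^0(B_w)$ and $\hom_{R^e}(B_t,B_w)\cong\hom_{R^e}(B_w,B_t)\cong\hh^0(B_t\otimes_R B_w)$ up to grading shifts. I would then feed these into the diagrammatics for Hochschild cohomology of dihedral Soergel bimodules (as developed in this paper, building on \cite{Li25}), running an induction on $\ell(w)$ with $B_w$ realized as the top summand of $B_{w'}B_s$ or $B_{w'}B_t$. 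The expected outcome: for $w=s$ the group $\hh^0(B_s)$ is purely a ``unit part'' concentrated in the single internal degree producing $R(\pm1)$, yielding (b); once $w$ contains a $t$, $\hh^0(B_w)$ has no unit part (so no $R$-summand of $\pi_s^{\pm}(B_w)$ appears), while $\hh^0(B_t\otimes_R B_w)$ is one-dimensional in exactly the internal degree matching $B_t(\pm(\ell(w)-1))$, the corresponding (co)unit $\varepsilon$ (resp.\ unit) being realized diagrammatically by the appropriate string of dotted generators.

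\textbf{Main obstacle.} The technical heart is the uniform Hochschild-cohomology computation: one must establish, simultaneously for all $w\in I_2(m)$, the vanishing of the ``wrong'' graded pieces of $\hh^0(B_w)$ and $\hh^0(B_t\otimes_R B_w)$, so that no spurious summands arise and the partial trace functor is genuinely represented by a single shifted bimodule rather than a larger complex. This vanishing is exactly where the diagrammatics for Hochschild cohomology — in particular the interaction of the dihedral Jones--Wenzl-type morphisms with the partial-trace closure of the $s$-colored strand — does the work; the adjunction bookkeeping and the reduction to $\hom$-spaces above are by comparison routine.
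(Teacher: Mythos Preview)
Your proposal rests on treating $\pi_s^{\pm}$ as the left/right adjoints to the inclusion $\sbim(\hf,\abrac{t})\hookrightarrow\sbim(\hf,I_2(m))$, but in the paper that is \emph{not} the definition: $\pi_s^{\pm}(M)$ are defined in \cref{pidef} as the cokernel and kernel of multiplication by $\rho_s^e=\rho_s\otimes 1-1\otimes\rho_s$ on $M$. The adjoint characterization you invoke is \cref{adjcor}, and that corollary is deduced \emph{from} the present theorem. So in the paper's logical structure your argument is circular: your ``formal'' proof of (a) (that $R$ already lies in the subcategory, hence adjunction (co)units are isomorphisms) and your entire reduction to representability presuppose exactly what \cref{adjcor} supplies. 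Under the actual definition, (a) is rather the observation that $\rho_s^e$ acts as zero on $R$.

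The paper's proof is more direct and does not pass through any adjunction. The key input is \cref{li72} (from \cite{Li25}), which identifies $\ker\rho_s^e$ on a Bott--Samelson with $\hom_{R^e}(B_t,-)(1)$ and the cokernel with its $\Db$-dual. Since the Jones--Wenzl projector commutes with $\rho_s^e$, one immediately gets $\pi_s^-(B_w)\cong\hom_{R^e}(B_t,B_w)(1)$ as bimodules (equation \eqref{eq2}). The right-hand side is then read off from the double-leaves basis: for $w=s$ the unique basis element is a broken line, giving $R(-1)$; for $w$ containing a $t$ the two basis elements assemble to $B_t(-\ell(w)+1)$. The $\pi_s^+$ statements follow by applying $\Db$.

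If one instead took the adjoint property as the \emph{definition}, your outline could be salvaged (Soergel's Hom formula gives the graded ranks of $\hom_{R^e}(R,B_w)$ and $\hom_{R^e}(B_t,B_w)$, and a $2\times 2$ linear system over $\Zb[v]$ recovers the Krull--Schmidt decomposition of the representing object), but two loose ends remain. First, the variance is swapped: the functor $X\mapsto\hom(\iota(X),B_w)$ is represented by the \emph{right} adjoint $\pi_s^-$, not $\pi_s^+$. Second, the claim that ``$\hh^0(B_w)$ has no unit part'' is not correct; $\hom_{R^e}(R,B_w)$ is always free of rank one over $R$, and what you actually need is that it matches $\hom_{R^e}(R,B_t(-\ell(w)+1))$, which it does.
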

where $\pi^-_s(M), \pi^+_s(M)$ are defined in \cref{pidef} and shown to be the right and left adjoints in \cref{adjcor}. Unlike the situation in type $A$, where \cite{GHMN19} bypassed such a calculation by appealing to special structural features, the dihedral case requires this explicit analysis. (see \cref{gensect} for more details). Moreover, for $m=3$, this gives explicit descriptions of the partial Hochschild cohomology functors used in the construction of triply-graded link homology. As a consequence, we determine the triply graded link homology of the Whitehead link (see \cref{fig:whitehead}):

\begin{theorem}
For the braid representative $\sigma_1^{-2}\sigma_2\sigma_1^{-1}\sigma_2$ of the Whitehead link,the Poincare series for $\overline{\hhh}$ will be
\begin{align*}
    \overline{\mathscr{P}(A, Q, T)}&=TQ^{-1}+T^2Q^{-3}+A\paren{T^{-1}Q^{-1} + Q^{-3}+\frac{Q^{-3}}{(1-Q^2)} +TQ^{-5}+T^2Q^{-7} } +A^2 \paren{ T^{-1}Q^{-5}+\frac{Q^{-7}}{(1-Q^2)}} 
\end{align*}
\end{theorem}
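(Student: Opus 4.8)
The plan is to compute $\overline{\hhh}$ directly from the braid word $\beta=\sigma_1^{-2}\sigma_2\sigma_1^{-1}\sigma_2$, following Khovanov's recipe: form the Rouquier complex $F^\bullet_\beta\in K^b(\sbim(\hf,I_2(3)))$ (note $S_3=I_2(3)$), apply the partial Hochschild cohomology functor corresponding to closing up the braid, and read off the homology. First I would write down $F^\bullet_\beta$ as a (large) complex and simplify it using Gaussian elimination / the standard reduction algorithm inside $K^b(\sbim)$, repeatedly cancelling contractible summands until the complex has a minimal form. For $I_2(3)$ the relevant Soergel bimodules are $R$, $B_s$, $B_t$, $B_{st}$, $B_{ts}$, $B_{sts}=B_{w_0}$, and the morphism spaces between them are completely understood diagrammatically, so this reduction is finite and mechanical, though bookkeeping-heavy.

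The key conceptual input is the previous theorem: closing up the braid around one of the two components amounts to applying a partial trace functor, and this partial trace is computed by the adjoints $\pi_s^{\pm}$ whose action on every indecomposable $B_w$ is given explicitly in that theorem (and, for $m=3$, this identifies the partial Hochschild cohomology functor used in link homology). So the second step is: after minimizing $F^\bullet_\beta$, apply $\pi_s^{\pm}$ termwise using the formulas $\pi^{\pm}_s(R)\cong R$, $\pi^{\pm}_s(B_s)\cong R(\pm1)$, and $\pi^{\pm}_s(B_w)\cong B_t(\mp(\ell(w)-1))$ for $w$ containing a $t$, keeping careful track of the grading shifts (internal $Q$-grading, homological $T$-grading, and the Hochschild $A$-grading). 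This reduces the two-component link to a computation of full Hochschild cohomology of a complex of bimodules over the smaller category $\sbim(\hf,\abrac t)$, i.e. essentially a $B_t$-strand, which in turn is the link homology of a $(2,k)$-torus-type closure that can be evaluated by the standard $\uhom_{R^e}(R,-)=\hh^\bullet$ diagrammatics. Summing the contributions with their shifts produces the Poincaré series, and I would then collect terms to match the stated formula, reconciling the geometric normalization conventions (framing/writhe corrections) so that the closure of $\beta$ is genuinely the Whitehead link.

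The main obstacle I expect is controlling the size of the intermediate complex: $\sigma_1^{-2}\sigma_2\sigma_1^{-1}\sigma_2$ has writhe $-1$ and five crossings, so the unreduced Rouquier complex has $2^5=32$ terms before cancellation, and the negative crossings mean the complexes are cohomologically spread out with many $B_w(-k)\to B_{w'}(-k')$ differentials whose coefficients must be tracked through several rounds of Gaussian elimination. A secondary subtlety is that one of the two link components is an unknot but the other is not, so the order in which the two partial traces are taken matters for intermediate simplicity (though not the final answer); choosing to first trace out the component that makes $F^\bullet_\beta$ collapse fastest — plausibly the $\sigma_2$ strand, since $\sigma_2$ appears only with positive exponent — is the practical key. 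Finally I would sanity-check the answer: setting $A\mapsto$ the HOMFLY–PT specialization must recover Kálmán-type symmetry and the known HOMFLY–PT polynomial of the Whitehead link, and the $\frac{1}{1-Q^2}$ denominators (reflecting the non-finite-dimensionality of $\hom$-spaces, consistent with the Remark after \cref{maintheorem}) must appear exactly with the coefficients shown.
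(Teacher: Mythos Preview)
Your plan is correct and matches the paper's approach: form the Rouquier complex, simplify via Gaussian elimination, apply the partial trace functors $\pi^\pm$ using \cref{partialprop}, then take the remaining Hochschild cohomology, with a final HOMFLY--PT sanity check. The one organizational trick the paper uses that you do not mention explicitly is the bilinearity of $\pi_t^\pm$ (\cref{lembilinear}): since $F_s^{-2}\in K^b(\sbim(\hf,\abrac{s}))$ one has $\pi_t^\pm(F_s^{-2}F_tF_s^{-1}F_t)\simeq F_s^{-2}\otimes_R\pi_t^\pm(F_tF_s^{-1}F_t)$, so the paper first simplifies only $F_tF_s^{-1}F_t$ and traces out the $t$-strand \emph{before} tensoring with $F_s^{-2}$, which sidesteps the full $2^5$-term expansion you flag as the main obstacle.
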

While computations in triply-graded link homology are generally formidable, the case of positive and negative braid links admits more accessible calculations. For example, $\overline{\hhh}$ of positive torus links $T(m,n)$ with $m,n \geq 0$ was determined in \cite{EH19}, \cite{HM19}, and partial results for all positive and negative braid links were obtained in \cite{last3li}. In contrast, our work provides the first genuinely nontrivial computations of triply-graded link homology in the setting of mixed braid links.

\begin{figure}[H]
    \centering
    \includegraphics[angle=90,origin=c, scale=0.1]{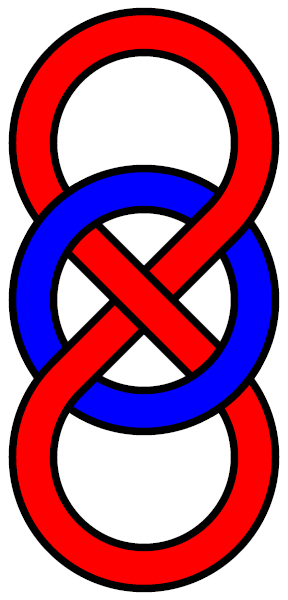}
    \vspace{-4ex}
    \caption{A planar diagram of the Whitehead link.}
    \label{fig:whitehead}
\end{figure}

   \vspace{-5ex}
\subsection{Outline}

\begin{itemize}
    \item In \cref{sect2} we recall the definition of a symmetric realization $\hf$ of a Coxeter system and review the tools needed from \cite{Li25} and \cite{GHMN19}.
    \item \cref{sect3} applies the tools from \cite{Li25} to derive certain vanishing results needed in \cref{sect4}.
    \item \cref{sect4} contains the proof of Relative Serre Duality, adapting the approach developed in \cite{GHMN19}. Our proof of Serre duality differs slightly, as \cite[Lemma 3.9]{GHMN19} is not correct as written. We provide an alternative argument, from which the main results of \cite{GHMN19} still follow.  
    \item In \cref{gensect} we conjecture an explicit description of the adjoints of the inclusion functor for general parabolic subgroups of Coxeter groups, and lightly sketch how \cref{ghmnconj} could be deduced from one or two central statements. 
    \item In \cref{whiteheadsect}, we compute the triply-graded link homology of the Whitehead link (Rolfsen $L5a1$) using tools from \cref{sect3}. This section may be read independently of \cref{sect4,gensect}. 
\end{itemize}

\section{Preliminaries}
\label{sect2} 

\begin{definition}
Let $\kb$ be a commutative ring. A symmetric realization of $(W,S)$ over $\kb$ is a triple $$\hf=(V, \set{\alpha_s}_{s\in S}\subset V^*, \set{\alpha_s^\vee}_{s\in S}\subset V)$$ where $V$ is a free, finite rank $\kb$ module such that for $\abrac{-,-}$ the natural pairing between $V, V^*$,                                                                                                                                                                                                                                             
\begin{enumerate}[(1)]
    \item $\abrac{\alpha_s^\vee, \alpha_s}=2$ for all $s\in S$.
    \item $a_{st}=a_{ts}$.
    \item The assignment $s(v)=v-\abrac{v, \alpha_s} \alpha_s^\vee$ for all $v\in V$ yields a representation of $W$.
    \item For each $s,t\in S$ let $a_{st}:=\abrac{\alpha_s^\vee, \alpha_t}$ and $m_{st}$ the order of $st$ in $W$. Write $a_{st}=-(q+q^{-1})$ for $q\in \kb$. Then $[m_{st}]_{q}=0$ where $[k]_q$ are the quantum numbers defined in Section 3.1 of \cite{DC}.
\end{enumerate}
\end{definition}

\begin{definition}
    A realization $\hf$ has fundamental weights if $\forall s\in S$, $\exists \rho_s\in V^*$ s.t. $\abrac{\rho_s, \alpha_t^\vee}=\delta_{st}$ for all coroots $\alpha_t^\vee$.
\end{definition}

It is clear from the definition that fundamental weights for $\hf$ exists $\iff \set{\alpha_s^\vee}$ are linearly independent. Throughout the remainder of the paper, until \cref{gensect}, we will work with the Coxeter system $(W=I_2(m), S=\set{s,t})$ and assume that our realization $\hf$ of $I_2(m)$ satisfies the following

\begin{assumption}
\label{asump1}
     $\hf$  has fundamental weights, $\mathrm{char}\ \kb=0$, and $\set{\alpha_s, \alpha_t}$ is linearly independent.
\end{assumption}

\begin{assumption}
\label{asump2}
    For all $k<m$, $[k]$ is invertible over $\kb$ and $q$ is a primitive $2m$-th root of unity. 
\end{assumption}

These assumptions are needed so that the results of \cite{Li25} hold.

\subsection{Notation}

\begin{itemize}
    \item $\boxed{M}$ around an object $M$ indicates that $M$ is placed in cohomological degree 0. 
    \item $(1)$ shifts the internal grading down by 1, e.g. $M(1)_d=M_{d+1}$. 
    \item $\bs(\underline{w})=B_{s_1}\otimes \ldots \otimes B_{s_k}$ for an expression $\underline{w}=(s_1\ldots, s_k)$, $s_i\in S$.  
    \item $B_w$ is the indecomposable Soergel Bimodule corresponding to $w\in W$. 
    \item Given an $R-$bimodule $M$, $\hh^k(M)=\ext_{R^e}^k(R, M)$ and $\hh_k(M)=\mathrm{Tor}^{R^e}_k(R, M)$.
    \item $\abrac{X_i^\bullet}_{i\in I}=$the span of $\set{X_i^\bullet}_{i\in I}$ which is the smallest graded triangulated subcategory containing the complexes $\set{X_i^\bullet}_{i\in I}$. Equivalently, close under taking $(1)$ and $[1]$ shifts, and taking cones and direct sums.  
    \item $\mathrm{Tot}$ is the total complex of a double complex. 
\end{itemize}

\subsection{Review of \cite{Li25}}

\begin{definition}
\label{defd}
Let $\Db: R^e-\mathrm{gmod}\to R^e-\mathrm{gmod}$ be the functor
\[ \Db(N):=\hom_{\_ R}(N, R) \]
where $\_ R$ means we take right $R$ module homomorphisms. This has a $R^e-\mathrm{gmod}$ structure defined as $(r\cdot f \cdot r^\prime)(b):=r \cdot f(b) \cdot r^\prime$.
\end{definition}

\begin{lemma}[{\cite[Theorem 7.2]{Li25}, \cite[Theorem 3.12]{Li25}}]
\label{li72}
Let $\rho_s^e:=\rho_s\otimes 1- 1\otimes \rho_s\in R^e$.  If \cref{asump1} and \cref{asump2} holds for the realization $\hf$, then we have isomorphism of bimodules
    \[\ker \rho_s^e( \bs(\un{w})) \overset{R^e}{\cong} \hom_{R^e}(B_t, \bs(\un{w}))(1) \qquad  \mathrm{coker} \rho_s^e(\bs(\un{w}))  \overset{R^e}{\cong} \Db(\hom_{R^e}(B_t, \bs(\un{w})(1)) \qquad \forall \un{w}\neq \emptyset\]
\end{lemma}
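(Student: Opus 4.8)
The plan is to recognise both sides of the first isomorphism as \emph{centralisers}. Since $B_t=R\otimes_{R^t}R(1)$ is generated as an $R^e$-module by $1\otimes 1$, with the only relations coming from balancing over $R^t$, one has $\hom_{R^e}(B_t,M)\cong Z_{R^t}(M)(-1)$ for every $R^e$-module $M$, where $Z_{R^t}(M)=\{m\in M:\ fm=mf\ \ \forall f\in R^t\}$; hence $\hom_{R^e}(B_t,\bs(\un{w}))(1)\cong Z_{R^t}(\bs(\un{w}))$. On the other hand $\ker\rho_s^e(\bs(\un{w}))=Z_{\rho_s}(\bs(\un{w}))=\{m:\rho_s m=m\rho_s\}$. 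Because $\langle\rho_s,\alpha_t^\vee\rangle=0$, the fundamental weight $\rho_s$ is $t$-invariant, so $\rho_s\in R^t$ and therefore $Z_{R^t}(\bs(\un{w}))\subseteq Z_{\rho_s}(\bs(\un{w}))$; this inclusion is the obvious natural map, and the content of the first statement is that it is surjective. It is convenient to package $\ker\rho_s^e(\bs(\un{w}))$ and $\mathrm{coker}\,\rho_s^e(\bs(\un{w}))$ as $H^0$ and $H^1$ of the two-term complex $\bs(\un{w})\xrightarrow{\rho_s^e}\bs(\un{w})$ (up to an internal shift, since $\deg\rho_s=2$).

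For the kernel I would argue by a graded-dimension count along the standard filtration. It is a standard fact (Soergel) that $\bs(\un{w})$ carries a graded $\Delta$-flag whose subquotients are the standard bimodules $R_x$, appearing with the usual light-leaves multiplicities; and on the subquotient $R_x$ the operator $\rho_s^e$ is multiplication by $\rho_s-x(\rho_s)\in R$, which vanishes exactly when $x$ stabilises $\rho_s$ — i.e.\ $x\in\{e,t\}$, since $\stab_{I_2(m)}(\rho_s)=\langle t\rangle$ — and is a nonzerodivisor otherwise. Passing to associated graded gives $\mathrm{gr}\,\ker\rho_s^e(\bs(\un{w}))\subseteq\ker(\mathrm{gr}\,\rho_s^e)=\bigl(\text{the sum of the $\Delta$-subquotients with }x\in\{e,t\}\bigr)$, so the graded dimension of $\ker\rho_s^e(\bs(\un{w}))$ is bounded above by that of this sum, which is finite in every degree. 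By Soergel's Hom formula the graded rank of $\hom_{R^e}(B_t,\bs(\un{w}))$ is the standard pairing of the characters, and since $\mathrm{ch}(B_t)$ is supported on $\{e,t\}$ this pairing picks out precisely the $R_e$- and $R_t$-multiplicities in the $\Delta$-flag of $\bs(\un{w})$ — the same bound. As $Z_{R^t}(\bs(\un{w}))\subseteq\ker\rho_s^e(\bs(\un{w}))$ and the two have equal finite graded dimensions, they coincide. This is exactly the point at which the diagrammatics of \cite{Li25} do the work: they furnish a light-leaves basis of $\hom_{R^e}(B_t,\bs(\un{w}))$ adapted to the filtration, realising the inclusion above as an equality explicitly.

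For the cokernel I would run the same filtration, now using that $M\mapsto M/\rho_s^e M$ is right exact: $\mathrm{coker}\,\rho_s^e(\bs(\un{w}))$ is filtered with subquotients $R_x/(\rho_s-x(\rho_s))R_x$, which equal $R_x$ for $x\in\{e,t\}$ and are torsion "divisor" modules for the remaining $x$. The crucial — and most delicate — point is that all the torsion subquotients cancel pairwise through the snake-lemma connecting maps; this uses the rigidity of a genuine Soergel bimodule (the extensions among its standard subquotients are as nondegenerate as possible), and in practice is carried out diagrammatically via the polynomial-forcing relations (a $\rho_s$-box slides past an $s$-coloured strand at the cost of a single correction term supported on smaller diagrams, since $\partial_s\rho_s=1$, and passes $t$-coloured strands freely, since $\partial_t\rho_s=0$), by writing down explicit generators of $\rho_s^e\bs(\un{w})$ and matching the quotient against the dual light-leaves basis. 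After the cancellation $\mathrm{coker}\,\rho_s^e(\bs(\un{w}))$ has a flag with subquotients $R_e$ and $R_t$ only — the same pieces as $\ker\rho_s^e(\bs(\un{w}))$, in the reverse order — so applying $\Db$ (and using $\Db(R_x)\cong R_{x^{-1}}=R_x$ for $x\in\{e,t\}$) converts the resulting $\nabla$-flag into a $\Delta$-flag and yields $\mathrm{coker}\,\rho_s^e(\bs(\un{w}))\cong\Db\bigl(\ker\rho_s^e(\bs(\un{w}))\bigr)=\Db\bigl(\hom_{R^e}(B_t,\bs(\un{w}))(1)\bigr)$, once the internal shifts are matched. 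Alternatively one can exhibit directly the pairing $\mathrm{coker}\,\rho_s^e(\bs(\un{w}))\otimes_\kb\ker\rho_s^e(\bs(\un{w}))\to R$ induced by the Frobenius/trace form on $\bs(\un{w})$ and prove it perfect by the same dimension count.

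The main obstacle is this torsion cancellation for the cokernel: showing the natural quotient maps are isomorphisms rather than mere surjections genuinely needs the Bott--Samelson structure and not just the $\Delta$-flag, which is why the argument ultimately routes through the explicit diagrammatic computation of \cite{Li25}. A secondary, purely bookkeeping hurdle is tracking all internal grading shifts and fixing the precise meaning of $\Db$, since the one-sided dual $\hom_{\_ R}(-,R)$ of \cref{defd} must be handled with care (it kills torsion and twists the bimodule structure) and the exact shape of the cokernel statement depends on this.
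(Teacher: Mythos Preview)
The paper does not prove this lemma: it is imported wholesale from \cite{Li25}, so there is no in-paper argument to compare your proposal against.

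Your centraliser reformulation is correct and gives a clean route to the kernel statement. Evaluation at $1\otimes_t 1$ identifies $\hom_{R^e}(B_t,M)(1)\cong Z_{R^t}(M)$ as sub-bimodules of $M$, and $\rho_s\in R^t$ gives $Z_{R^t}(M)\subseteq\ker\rho_s^e(M)$. The dimension count closes the gap: under \cref{asump1} one has $(V^*)^t=\kb\rho_s\oplus(V^*)^{s,t}$, and $(V^*)^{s,t}$ acts centrally on every Bott--Samelson, so $\mathrm{Stab}_{I_2(m)}(\rho_s)=\langle t\rangle$ and on each standard subquotient $R_x$ both functors vanish unless $x\in\{e,t\}$; Soergel's Hom formula (equivalently the double-leaves basis) then forces the lower and upper bounds to coincide.

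For the cokernel you correctly identify the torsion cancellation as the obstacle and defer to \cite{Li25}, which is exactly what the paper does. One comment on your duality alternative: self-duality $\Db(\bs(\un{w}))\cong\bs(\un{w})$ together with the fact that $\rho_s^e$ is multiplication by a fixed element of $R^e$ gives $\Db(\mathrm{coker}\,\rho_s^e)\cong\ker\rho_s^e$ immediately, using only left-exactness of $\Db$. But the lemma asserts the reverse, $\mathrm{coker}\cong\Db(\ker)$, and deducing this from $\Db(\mathrm{coker})\cong\ker$ requires the cokernel to be reflexive (equivalently free) as a right $R$-module --- which is precisely what is not automatic from the $\Delta$-filtration and is part of the content of \cite{Li25}. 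So neither of your two routes to the cokernel closes without that input; your assessment that the diagrammatics are ultimately needed is accurate.
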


\begin{warn}
 It is tempting to use the adjunction $\hom_{R^e}(B_t, \bs(\un{w}))\cong \hom_{R^e}(R,B_t\otimes_R \bs(\un{w})) $ above but this adjunction is only an isomorphism of right $R-$modules, not as bimodules.  
\end{warn}

\begin{lemma}
\label{li36}
  Given $B\in \sbim(\hf  ,I_2(m))$, if \cref{asump1} and \cref{asump2} holds for the realization $\hf$, then  $\ext^{k}_{R^e}(R, B)$ is a free $R-$module for all $k$. 
\end{lemma}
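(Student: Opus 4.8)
The plan is to reduce to Bott--Samelson bimodules, compute $\hh^\bullet$ through the small Koszul resolution of the diagonal, and feed in \cref{li72}.

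\textbf{Reduction to Bott--Samelson bimodules and the trivial case.} Every $B\in\sbim(\hf,I_2(m))$ is a graded direct summand of a finite sum of shifts of Bott--Samelson bimodules $\bsw$, and $\hh^k=\ext^k_{R^e}(R,-)$ is an additive functor, hence carries the relevant idempotent to an idempotent; so $\hh^k(B)$ is a graded summand of a sum of shifts of $\hh^k(\bsw)$. Since $R$ is a positively graded polynomial ring over the field $\kb$, a finitely generated graded projective $R$--module is graded free, so it suffices to prove $\hh^k(\bsw)$ is free over $R$ for every word $\underline w$. (Using the $W$--stable splitting $V=V_{\mathrm{ess}}\oplus V_{\mathrm{triv}}$ one further reduces to $\dim_\kb V=2$, the general case only acquiring a free exterior tensor factor.) For $\underline w=\emptyset$ we have $\bs(\emptyset)=R$ and the Hochschild--Kostant--Rosenberg isomorphism $\hh^k(R)\cong\Lambda^k_\kb(V^*)\otimes_\kb R$ is free.

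\textbf{A two--column spectral sequence.} Take $\dim V=2$; by \cref{asump1} the fundamental weights $\{\rho_s,\rho_t\}$ form a basis of $V^*$, so $(\rho_s^e,\rho_t^e)$ is an $R^e$--regular sequence generating $\ker(R^e\to R)$, and $\hh^\bullet(\bsw)$ is the cohomology of the length--two Koszul complex of the commuting multiplication operators $\rho_s^e,\rho_t^e$ on $\bsw$. Filtering so as to take $\rho_s^e$--cohomology first yields a spectral sequence, degenerate at $E_2$, whose two columns are $\ker\rho_s^e(\bsw)$ and $\mathrm{coker}\,\rho_s^e(\bsw)$ with surviving differential induced by $\rho_t^e$. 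For $\underline w\neq\emptyset$, \cref{li72} identifies these with $\hom_{R^e}(B_t,\bsw)(1)$ and $\Db\bigl(\hom_{R^e}(B_t,\bsw)(1)\bigr)$. The first is free over $R$ by the Soergel Hom formula. For the second: the $R^e$--structure on any $\Db(N)$ is \emph{symmetric} (as $R$ is commutative, $r\cdot f=f\cdot r$), so $\Db(N)$ is free over $R$ \emph{and} every $x^e$, in particular $\rho_t^e$, acts on it as $0$. Hence the induced differential on the second column vanishes, $\hh^2(\bsw)=\mathrm{coker}\,\rho_s^e(\bsw)$ is free, the second column's contribution to $\hh^1(\bsw)$ is this same free module, and since the first column contributes $\ker\bigl(\rho_t^e\mid\ker\rho_s^e(\bsw)\bigr)=\hh^0(\bsw)$ and $\mathrm{coker}\bigl(\rho_t^e\mid\ker\rho_s^e(\bsw)\bigr)$, everything comes down to the structure over $R$ of
\[
\hh^0(\bsw)=\ker\bigl(\rho_t^e\mid\hom_{R^e}(B_t,\bsw)(1)\bigr),\qquad \mathrm{coker}\bigl(\rho_t^e\mid\hom_{R^e}(B_t,\bsw)(1)\bigr).
\]

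\textbf{The remaining freeness --- the main obstacle.} The difficulty is that $\rho_t^e$ does not act on the Hom space by multiplication by an element of $R$. Note $\hom_{R^e}(B_t,\bsw)$ is the sub--bimodule of $R^t$--central elements of $\bsw$ (a morphism from $B_t=R\otimes_{R^t}R(1)$ is pinned down by the image of $1\otimes1$), and $\hh^0(\bsw)$ is its intersection with the $\rho_t$--central elements, i.e. the genuine center. I would finish along the geometric picture: $\bsw=R\otimes_{R^{s_1}}R\otimes\cdots\otimes_{R^{s_k}}R$ is a complete intersection in $\mathrm{Spec}\,R^{\otimes(k+1)}$, hence Gorenstein, finite free over $R$, with $\mathrm{Spec}\,\bsw=\bigcup_{v\le\underline w}\Gamma_v$ and $\Gamma_e\cong\mathbb A^2$ the diagonal. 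Writing $\bsw\hookrightarrow\prod_v\mathcal O(\Gamma_v)$, one checks $\hh^0(\bsw)=\mathrm{Ann}_{\bsw}(J)$ (where $J$ cuts out $\Gamma_e$) is carried isomorphically, as an $R$--module, onto the ideal of $R$ of functions vanishing on $\bigcup_{v\neq e}V^v$, a finite union of lines --- a principal ideal generated by a product of roots, hence free of rank one; equivalently, $\hh^0(\bsw)$ is (up to shift) the dualizing module of the smooth component $\Gamma_e$. For $\hh^1(\bsw)$ the subtlety is real: $\mathrm{coker}(\rho_t^e\mid\ker\rho_s^e(\bsw))$ may carry torsion such as $R/\alpha_t$ coming from components $\Gamma_v$ meeting the diagonal along a hyperplane, and one must show it is "rescued" --- i.e. the extension with the free module $\mathrm{coker}\,\rho_s^e(\bsw)$ is nontrivial in just the right way, so that $\hh^1(\bsw)$ is again the sections of a reflexive (hence free) sheaf on $\Gamma_e$. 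Pinning this down is the crux; I would do it either via the explicit diagrammatic free basis for $\hh^\bullet(\bsw)$ provided by \cite{Li25}, or by iterating \cref{li72} on the indecomposable Soergel summands of $\hom_{R^e}(B_t,\bsw)$ (legitimate since $\ker\rho_t^e$ and $\mathrm{coker}\,\rho_t^e$ commute with finite sums and summands) with an induction on $\ell(\underline w)$. It is here that \cref{asump1}, \cref{asump2} and the full strength of \cite{Li25} are used.
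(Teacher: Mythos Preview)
Your spectral--sequence strategy is reasonable in outline, but there is a genuine error in the key step. You assert that ``the $R^e$--structure on any $\Db(N)$ is symmetric (as $R$ is commutative, $r\cdot f=f\cdot r$), so \ldots every $x^e$, in particular $\rho_t^e$, acts on it as $0$.'' This is false. The bimodule structure on $\Db(N)=\hom_{-R}(N,R)$ uses the \emph{left} $R$--action on $N$ for one of the two actions (otherwise $\Db(B_s)\cong B_s$, cited immediately after \cref{soelem}, would be impossible, since $B_s$ is certainly not a symmetric bimodule). Consequently $\rho_t^e$ does \emph{not} annihilate $\mathrm{coker}\,\rho_s^e(\bsw)$, and your identification of $\hh^2(\bsw)$ with all of $\mathrm{coker}\,\rho_s^e(\bsw)$, together with the claimed splitting of the $\hh^1$ contribution, collapses. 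In concrete terms, $\mathrm{coker}\,\rho_s^e(B_s B_t)\cong B_t$ (up to shift) by \cref{li72} and \cref{partialprop}, and $\rho_t^e$ acts nontrivially on $B_t$.

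Beyond this, your final paragraph candidly admits that the genuine content---freeness of $\hh^1$ after the extension problem is resolved---remains unproved, and proposes to settle it ``via the explicit diagrammatic free basis for $\hh^\bullet(\bsw)$ provided by \cite{Li25}.'' But that is exactly the paper's proof: \cite[Theorem~7.12]{Li25} establishes freeness directly for the geometric realization by exhibiting such a basis, and \cite[Theorem~3.6]{Li25} transports it to other realizations. So once your Koszul/spectral--sequence scaffolding is repaired, the argument still lands on the same external input, and does not furnish an independent route.
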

\begin{proof}
    \cite[Theorem 7.12]{Li25} shows this for the geometric realization and \cite[Theorem 3.6]{Li25} shows that $\ext$ for other realizations can be gotten by tensoring over $\kb$ with a finite free module over $\kb$. 
\end{proof}

\subsection{Review of \cite{GHMN19}}

In this subsection, we collect some definitions and results from \cite{GHMN19} that are used later in this paper. 

\begin{definition}
Suppose $\Ac$ is a triangulated category that has a monoidal structure $\otimes$. Then $\Ac$ is a triangulated monoidal category with monoidal unit $\mathbbm{1}$ if $a\otimes -$ and $-\otimes a$ are triangulated functors for any $a\in \Ac$. 
\end{definition}

\begin{definition}
A \textit{unital idempotent} in $\Ac$ is a pair $(\mathbf{Q}, \eta)$ where $\mathbf{Q}\in \Ac$ and $\eta: \mathbbm{1}\to \mathbf{Q}$, s.t. $\eta \otimes 1_{\mathbf{Q} }, 1_{\mathbf{Q}}\otimes \eta: \mathbf{Q}\to \mathbf{Q}\mathbf{Q}$ are isomorphisms. A \textit{counital idempotent} in $\Ac$ is a pair $(\mathbf{P}, \epsilon)$ where $\mathbf{P}\in \Ac$ and $\eta:\mathbf{P}\to \mathbbm{1}$, s.t. $\epsilon \otimes 1_{\mathbf{P} }, 1_{\mathbf{P}}\otimes \epsilon: \mathbf{P} \mathbf{P}\to \mathbf{P}$ are isomorphisms.
\end{definition}

\begin{definition}
Given a triangulated monoidal category $\Ac$, an $A-$module category $\Mc$ is a triangulated category such that the action of $\Ac$ on $\Mc$ is triangulated.
\end{definition}

\begin{example}
    If $\Cc$ is an additive category, then $\Ac=K^b( \mathrm{End}(\Cc))$ is a triangulated monoidal category where $\mathrm{End}(\Cc)$ is the category of endofunctors of $\Cc$. Moreover, $\Mc=K^b(\Cc)$ is an $\Ac-$module category, where given $M^\bullet\in K^b(\Cc)$ and $F= [0\to \boxed{F}\to 0]\in \Ac$ the action of the endofunctor $F$ on $M^\bullet$ is given by
\[F\cdot M^\bullet= [\ldots \to F(M^i)\xrightarrow{F(d)} F(M^{i+1})\to \ldots ]\]
For general $F^\bullet\in \Ac$, we take the total complex for $F^\bullet\cdot M^\bullet$.
\end{example}

\begin{definition}[Two-Step]
Let $\Mc_1, \Mc_2\subset \Mc$ be full triangulated subcategories. Then $(\Mc_1, \Mc_2)$ is a a semi-orthogonal decomposition of $\Mc$, written $\Mc\simeq (\Mc_1\to \Mc_2)$ if 
\begin{enumerate}[(1)]
\item Each object $X$ of $\Mc$ fits into a distinguished triangle
\[ X_2\to X \to X_1\xrightarrow{+1} \qquad X_i\in \Mc_i \]
 \item $\hom_{\Mc}(\Mc_2, \Mc_1)=0$
\end{enumerate}
\end{definition}

\begin{lemma}[{\cite[Corollary 4.14]{GHMN19}}]
\label{ghmn414}
    If $\mathbf{P}$ (resp.\ $\mathbf{Q}$) is a counital (resp.\ unital) idempotent in $\mathcal{A}$ then each $\mathcal{A}$-module category $\mathcal{M}$ inherits a semi-orthogonal decomposition $\mathcal{M} \simeq (\ker \mathbf{P} \to \mathrm{im}\, \mathbf{P})$ (resp.\ $\mathcal{M} \simeq (\mathrm{im}\, \mathbf{Q} \to \ker \mathbf{Q})$).
\end{lemma}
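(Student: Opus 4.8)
The plan is to prove the two statements in parallel, since the unital case for $\mathbf{Q}$ is formally dual to the counital case for $\mathbf{P}$; I describe the counital case. Write $\epsilon_X\colon \mathbf{P}\cdot X\to X$ for the morphism in $\mathcal{M}$ induced by $\epsilon\colon\mathbf{P}\to\mathbbm{1}$ and the module action; this is natural in $X$ by bifunctoriality of the action. The candidate subcategories are $\ker\mathbf{P}=\{X\in\mathcal{M}: \mathbf{P}\cdot X\cong 0\}$ and $\mathrm{im}\,\mathbf{P}=\{X\in\mathcal{M}: \epsilon_X \text{ is an isomorphism}\}$, the latter being the essential image of the triangulated functor $\mathbf{P}\cdot(-)$. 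First I would check these are full triangulated subcategories: for $\ker\mathbf{P}$ this is immediate from exactness of $\mathbf{P}\cdot(-)$, and for $\mathrm{im}\,\mathbf{P}$ it follows from the five lemma for triangulated categories applied to the natural transformation $\epsilon$. Along the way one records the two consequences of the idempotent axioms that drive everything: under the associativity and unit constraints of the module action, $\mathbf{P}\cdot\epsilon_X$ is identified with $(1_{\mathbf{P}}\otimes\epsilon)\cdot 1_X$ and $\epsilon_{\mathbf{P}\cdot Y}$ with $(\epsilon\otimes 1_{\mathbf{P}})\cdot 1_Y$, both of which are isomorphisms since $\mathbf{P}$ is a counital idempotent.

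For axiom (1), given $X$ I complete $\epsilon_X$ to a distinguished triangle $\mathbf{P}\cdot X\xrightarrow{\epsilon_X}X\to C_X\xrightarrow{+1}$. Then $\mathbf{P}\cdot X\in\mathrm{im}\,\mathbf{P}$ tautologically, and applying the exact functor $\mathbf{P}\cdot(-)$ shows $\mathbf{P}\cdot C_X$ is the cone of the isomorphism $\mathbf{P}\cdot\epsilon_X$, hence zero, so $C_X\in\ker\mathbf{P}$. This is precisely the triangle $X_2\to X\to X_1\xrightarrow{+1}$ with $X_2\in\mathrm{im}\,\mathbf{P}$, $X_1\in\ker\mathbf{P}$ demanded by $\mathcal{M}\simeq(\ker\mathbf{P}\to\mathrm{im}\,\mathbf{P})$. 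For axiom (2) I must show $\hom_{\mathcal{M}}(\mathrm{im}\,\mathbf{P},\ker\mathbf{P})=0$: given $A\in\mathrm{im}\,\mathbf{P}$, $C\in\ker\mathbf{P}$ and $f\colon A\to C$, since $\epsilon_A$ is invertible it suffices to see that $f\circ\epsilon_A\colon\mathbf{P}\cdot A\to C$ vanishes; naturality of $\epsilon$ expresses this composite through $\epsilon_C\circ(\mathbf{P}\cdot f)$, and $\mathbf{P}\cdot f$ factors through $\mathbf{P}\cdot C\cong 0$, so it is zero and hence $f=0$.

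The unital statement is obtained by dualizing: replace $\epsilon\colon\mathbf{P}\to\mathbbm{1}$ by $\eta\colon\mathbbm{1}\to\mathbf{Q}$ and $\epsilon_X$ by $\eta_X\colon X\to\mathbf{Q}\cdot X$, use the distinguished triangle $K_X\to X\xrightarrow{\eta_X}\mathbf{Q}\cdot X\xrightarrow{+1}$ (so $K_X\in\ker\mathbf{Q}$ because $\mathbf{Q}\cdot\eta_X\cong(1_{\mathbf{Q}}\otimes\eta)\cdot 1_X$ is invertible, while $\eta_{\mathbf{Q}\cdot Y}\cong(\eta\otimes 1_{\mathbf{Q}})\cdot 1_Y$ is invertible), and run the mirror-image naturality argument to obtain $\hom_{\mathcal{M}}(\ker\mathbf{Q},\mathrm{im}\,\mathbf{Q})=0$; this gives $\mathcal{M}\simeq(\mathrm{im}\,\mathbf{Q}\to\ker\mathbf{Q})$.

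The argument is essentially the transfer to module categories of the standard Bousfield localization picture, so no single step is genuinely hard. The only points that need care are the coherence bookkeeping — keeping track of which of $\epsilon\otimes 1_{\mathbf{P}}$, $1_{\mathbf{P}}\otimes\epsilon$ (resp.\ their $\mathbf{Q}$-analogues) is being invoked, and how the associativity and unit constraints of the action intervene — together with the small subtlety that the essential image of an exact functor is not a priori a triangulated subcategory, which is what forces the five-lemma input when verifying that $\mathrm{im}\,\mathbf{P}$ (resp.\ $\mathrm{im}\,\mathbf{Q}$) is triangulated.
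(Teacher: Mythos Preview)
Your proof is correct and is exactly the standard Bousfield-style argument one expects. Note, however, that the paper does not give its own proof of this lemma: it simply quotes the result as \cite[Corollary 4.14]{GHMN19}. So there is nothing to compare against in the present paper; what you have written is essentially a self-contained reconstruction of the argument in \cite{GHMN19}, and it is sound.
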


\begin{lemma}[{\cite[Lemma 4.17]{GHMN19}}]
\label{ghmn417}
    Let $\{X_i\}_{i\in I}$ and $\{Y_j\}_{j\in J}$ be a collection of objects in $\mathcal{M}$. Let $\mathbf{E}$ be a unital or counital idempotent in $\mathcal{A}$. Assume that $\mathbf{E}(X_i) \simeq 0$ for all $i$ and $\mathbf{E}(Y_j) \simeq Y_j$ for all $j$. Then:
\begin{itemize}
    \item[(3)] If $\{X_i\}_{i\in I}$ and $\{Y_j\}_{j\in J}$ together span all of $\mathcal{M}$ as a triangulated category, then $\{X_i\}_{i\in I}$ spans the kernel of $\, \mathbf{E}$ while $\{Y_j\}_{j\in J}$ spans the image of $\, \mathbf{E}$.
\end{itemize}
\end{lemma}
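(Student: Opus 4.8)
The plan is to deduce this statement — which is \cite[Lemma 4.17(3)]{GHMN19} — directly from the semi-orthogonal decomposition of \cref{ghmn414} together with two standard facts about triangulated categories: that the class of objects admitting a two-step filtration by semi-orthogonal triangulated subcategories is itself triangulated, and that the filtration triangle of an object with respect to a semi-orthogonal decomposition is unique up to canonical isomorphism. Write $\mathcal{X} = \abrac{X_i}_{i\in I}$ and $\mathcal{Y} = \abrac{Y_j}_{j\in J}$ for the triangulated subcategories they span, so by hypothesis $\abrac{\mathcal{X},\mathcal{Y}} = \mathcal{M}$. I would treat the counital case $\mathbf{E}=\mathbf{P}$ in detail and note that the unital case $\mathbf{E}=\mathbf{Q}$ is entirely parallel, with the roles of $\ker$ and $\mathrm{im}$ exchanged as recorded in \cref{ghmn414}.

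\emph{Easy containments.} Since $\mathbf{E}$ is given by tensoring with a fixed object, it is a triangulated functor commuting with shifts and finite direct sums; hence the full subcategory $\ker\mathbf{E}$ is triangulated and closed under direct sums, and as it contains every $X_i$ it contains $\mathcal{X}$. Dually, using naturality of the counit $\epsilon:\mathbf{P}\to\mathbbm{1}$ and the map-of-triangles/five-lemma argument, $\mathrm{im}\,\mathbf{E}$ is a triangulated subcategory closed under direct sums containing every $Y_j$, hence contains $\mathcal{Y}$. (Here one uses the standard fact that for a counital idempotent $\mathbf{E}(Y_j)\simeq Y_j$ as an abstract object already forces $\epsilon_{Y_j}$ to be an isomorphism, so $Y_j\in\mathrm{im}\,\mathbf{E}$.) So $\mathcal{X}\subseteq\ker\mathbf{E}$ and $\mathcal{Y}\subseteq\mathrm{im}\,\mathbf{E}$, and only the reverse inclusions remain.

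\emph{Everything is filtered, then rigidity.} By \cref{ghmn414} we have $\mathcal{M}\simeq(\ker\mathbf{P}\to\mathrm{im}\,\mathbf{P})$; in particular $\hom_{\mathcal{M}}(\mathrm{im}\,\mathbf{P},\ker\mathbf{P})=0$, which upgrades to $\hom_{\mathcal{M}}(\mathcal{Y},\mathcal{X}[n])=0$ for all $n$ since $\mathcal{Y}\subseteq\mathrm{im}\,\mathbf{P}$, $\mathcal{X}\subseteq\ker\mathbf{P}$ and objects left-orthogonal to all shifts of $\mathcal{X}$ form a triangulated subcategory. Let $\mathcal{T}\subseteq\mathcal{M}$ be the full subcategory of objects $Z$ fitting into a distinguished triangle $B\to Z\to A\xrightarrow{+1}$ with $A\in\mathcal{X}$, $B\in\mathcal{Y}$. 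Using this $\hom$-vanishing and the octahedral axiom, $\mathcal{T}$ is a triangulated subcategory closed under direct sums; it contains each $X_i$ (triangle $0\to X_i\to X_i$) and each $Y_j$ (triangle $Y_j\to Y_j\to 0$), hence $\mathcal{T}\supseteq\abrac{\mathcal{X},\mathcal{Y}}=\mathcal{M}$. So every object of $\mathcal{M}$ admits such a triangle, and this triangle is an SOD triangle for $(\ker\mathbf{P}\to\mathrm{im}\,\mathbf{P})$ because $B\in\mathrm{im}\,\mathbf{P}$ and $A\in\ker\mathbf{P}$. If $Z\in\ker\mathbf{P}$, uniqueness of the SOD triangle identifies $B\to Z\to A$ with $0\to Z\to Z$, so $Z\simeq A\in\mathcal{X}$; hence $\ker\mathbf{P}\subseteq\mathcal{X}$, and combined with the easy containment, $\mathcal{X}=\ker\mathbf{E}$. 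Symmetrically, if $Z\in\mathrm{im}\,\mathbf{P}$ then $A\simeq0$ and $Z\simeq B\in\mathcal{Y}$, giving $\mathcal{Y}=\mathrm{im}\,\mathbf{E}$. For the unital idempotent $\mathbf{Q}$ the same argument runs with the SOD $\mathcal{M}\simeq(\mathrm{im}\,\mathbf{Q}\to\ker\mathbf{Q})$, so now $\{X_i\}$ span the sub-part and $\{Y_j\}$ the quotient-part of each filtration.

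\emph{Main obstacle.} There is no deep step here; the content is entirely in the two structural lemmas invoked in the previous paragraph — stability of two-step filtrations under cones, and uniqueness of SOD filtration triangles — both of which are standard and already underlie the proof of \cref{ghmn414}. The only points I would be careful about are bookkeeping ones: checking that $\mathrm{im}\,\mathbf{E}$ really is a triangulated subcategory (i.e.\ that the abstract isomorphism $\mathbf{E}(Y_j)\simeq Y_j$ in the hypothesis genuinely places $Y_j$ in the image as defined), and keeping straight which of $\ker\mathbf{E}$, $\mathrm{im}\,\mathbf{E}$ plays the role of "sub" versus "quotient" in the SOD for unital versus counital idempotents. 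Accordingly I would either cite these facts from \cite[Section 4]{GHMN19} or include the short octahedral verifications inline.
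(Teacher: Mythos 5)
Your proposal is correct, but note first that this lemma is imported: the paper states it as \cite[Lemma 4.17]{GHMN19} in its review section and gives no proof of its own, so there is no in-paper argument to compare against. Judged on its merits, your argument works. The easy containments $\abrac{X_i}\subseteq\ker\mathbf{E}$ and $\abrac{Y_j}\subseteq\im\mathbf{E}$ are handled correctly (including the point that an abstract isomorphism $\mathbf{E}(Y_j)\simeq Y_j$ already forces the structure map to be invertible, since the fixed subcategory is closed under isomorphism and contains $\mathbf{E}(Y_j)$), and the main step --- that the extension category $\mathcal{T}=\mathcal{Y}\ast\mathcal{X}$ is triangulated because $\hom(\im\mathbf{P},\ker\mathbf{P}[n])=0$ by \cref{ghmn414}, hence $\mathcal{T}=\mathcal{M}$, after which uniqueness of the semi-orthogonal filtration triangle identifies any $Z\in\ker\mathbf{P}$ with its $\mathcal{X}$-piece and any $Z\in\im\mathbf{P}$ with its $\mathcal{Y}$-piece --- is sound, granted the two standard facts you flag ($3\times3$/octahedron closure of two-step filtrations under cones, and uniqueness of SOD triangles), both available under exactly the orthogonality you have; also, since the paper's notion of span does not require closure under direct summands, no thickness issue arises for $\mathcal{T}$. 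Two comparisons: (1) your last step can be shortened by simply applying $\mathbf{P}$ to the triangle $B\to Z\to A$: if $Z\in\ker\mathbf{P}$ then $B\simeq\mathbf{P}B\simeq 0$, so $Z\simeq A\in\mathcal{X}$, and dually for $\im\mathbf{P}$, with no appeal to uniqueness of filtration triangles; (2) the argument in \cite{GHMN19} itself (as I recall) runs through complementarity rather than the $\ast$-construction: since $\mathbf{E}$ is triangulated, the preimage $\mathbf{E}^{-1}\abrac{Y_j}$ is a triangulated subcategory containing all generators, hence all of $\mathcal{M}$, giving $\im\mathbf{E}\subseteq\abrac{Y_j}$, and running the same argument with the complementary idempotent $\mathbf{C}$ (which kills the $Y_j$, fixes the $X_i$, and satisfies $\ker\mathbf{E}=\im\mathbf{C}$) gives $\ker\mathbf{E}\subseteq\abrac{X_i}$. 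That route avoids both of your auxiliary structural lemmas, while yours stays entirely inside the semi-orthogonal decomposition of \cref{ghmn414}; either is a legitimate proof.
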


\section{Partial Trace}
\label{sect3}

\begin{definition}
\label{pidef}
    Given $s\in S$, $M\in \sbim(\hf  ,I_2(m))$, let $\pi^-_s$, $\pi^+_s$ be the kernel, cokernel, respectively, of the action of $\rho_s^e=\rho_s\otimes 1- 1\otimes \rho_s$ on $M$. 
    \[ 0\to \pi^-_s(M)\to M\xrightarrow{\rho_s\otimes 1- 1\otimes \rho_s}M \to  \pi^+_s(M)\to 0 \]
\end{definition}

\begin{remark}
    Let $\eta$ be the inclusion map $ \pi^-_s(M)\to M$ above and $\epsilon$ the projection map $M \to  \pi^+_s(M)$. Then $(\pi^-_s, \eta)$, $(\pi^+_s, \epsilon)$  are counital/unital idempotents in $K^b(\mathrm{End}(\sbim(\hf  ,I_2(m))  ))$ respectively. 
\end{remark}

\begin{lemma}
\label{lem:jker}
    Let $J$ be the projector to a sub-bimodule $N\subset M$ and let $T: M\to M$ be a bimodule endomorphism. If $TJ=JT$ then $\ker(T|_N)=J(\ker T)$. 
\end{lemma}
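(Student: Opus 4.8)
The plan is to reduce everything to the two defining properties of the projector and the commutation hypothesis. Write $J\colon M\to M$ for the given idempotent, so that $J^2=J$ and $\im J=N$. The first observation I would record is that $J$ restricts to the identity on $N$: any $x\in N$ may be written $x=Jy$, and then $Jx=J^2y=Jy=x$. Note this uses idempotency, not merely $\im J=N$.

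Next I would check that $T|_N$ is genuinely an endomorphism of $N$, so that $\ker(T|_N)$ makes sense as stated. From $TJ=JT$ we get $T(N)=T(JM)=J(TM)\subseteq JM=N$, hence $T$ preserves $N$ and $T|_N\colon N\to N$ is well defined. Consequently $\ker(T|_N)=N\cap\ker T$, and the lemma becomes the identity $N\cap\ker T=J(\ker T)$.

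It then remains to prove this identity by the two inclusions. For $\subseteq$: if $x\in N\cap\ker T$ then $x=Jx$ since $J$ fixes $N$ pointwise, and $x\in\ker T$, so $x=Jx\in J(\ker T)$. For $\supseteq$: if $x\in\ker T$ then $Jx\in\im J=N$, while $T(Jx)=J(Tx)=J(0)=0$, so $Jx\in N\cap\ker T=\ker(T|_N)$. Combining the inclusions gives $\ker(T|_N)=J(\ker T)$, and one checks along the way that every map here is a morphism of $R$-bimodules, so the equality is one of sub-bimodules.

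There is essentially no obstacle: the statement is a formal consequence of idempotency and commutativity, valid in any abelian category (indeed any additive category with kernels), and in particular in $R^e\text{-}\mathrm{gmod}$. The only point that warrants a moment's care is the direction of the "$J$ fixes $N$ pointwise" step, which is where idempotency of $J$ is actually used.
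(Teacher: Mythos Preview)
Your proof is correct and follows exactly the same route as the paper's, which tersely records the two inclusions $\ker(T|_N)\subset J(\ker T)$ (using that $J$ fixes $N$ pointwise) and $J(\ker T)\subset \ker(T|_N)$ (using $TJ=JT$). You have simply spelled out the details the paper leaves implicit.
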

\begin{proof}
  We always have $\ker(T|_N)\subset J(\ker T)$ and $TJ=JT$ implies $J(\ker T)\subset \ker(T|_N)$. 
\end{proof}

\begin{theorem}
\label{partialprop}
   Let $w\in I_2(m)$ and $B_w\in \sbim(\hf, I_2(m))$. Then
   \begin{enumerate}[(a)]
        \item $\pi^{\pm}_s(R)\cong R$.
       \item $\pi^{\pm}_s(B_s)\cong R(\pm 1)$.
       \item If $w$ contains at least one $t$, $\pi_s^-(B_w)\cong B_t (-\ell(w)+1) $ and $\pi_s^+(B_w)\cong B_t (\ell(w)-1) $.
   \end{enumerate}
\end{theorem}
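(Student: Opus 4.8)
The plan is to compute $\pi^{\pm}_s(B_w)$ by identifying $\rho_s^e$ as a bimodule endomorphism of $\bs(\underline{w})$ for a reduced expression $\underline{w}$, and then transporting the kernel/cokernel computation from $\bs(\underline{w})$ to the indecomposable summand $B_w$ using the projector $J_{B_w}$ onto $B_w$ inside $\bs(\underline{w})$. Parts (a) and (b) are direct: for (a), $\rho_s^e$ acts on $R$ by $r\mapsto \rho_s r - r \rho_s = 0$ since $R$ is commutative, so both kernel and cokernel are $R$ itself (no shift needed as $R$ sits in degree $0$); for (b), $B_s = R\otimes_{R^s} R(1)$, and one checks directly that $\rho_s^e$ acting on $B_s$ has kernel and cokernel each isomorphic to $R$, with the grading shift $(1)$ on $B_s$ producing the $(\pm1)$ — this is essentially the $\underline{w}=(s)$ base case and can also be read off from \cref{li72} applied with $\underline{w}=(s)$ together with $\hom_{R^e}(B_t,B_s)$ being concentrated appropriately (or just a hands-on $2\times 2$ matrix computation in the standard basis of $B_s$).

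For part (c), the key input is \cref{li72}: if $\underline{w}$ is a reduced expression for $w$ (which, since $w$ contains a $t$, is nonempty), then
\[
\ker \rho_s^e(\bs(\underline{w})) \overset{R^e}{\cong} \hom_{R^e}(B_t, \bs(\underline{w}))(1), \qquad \operatorname{coker}\rho_s^e(\bs(\underline{w})) \overset{R^e}{\cong} \Db\big(\hom_{R^e}(B_t,\bs(\underline{w}))\big)(-1).
\]
The idea is then to decompose $\bs(\underline{w}) \cong B_w \oplus (\text{lower terms})$, where by the classification of indecomposables in the dihedral group every lower term $B_x$ with $x<w$ that appears has $\ell(x) \le \ell(w)-1$, and crucially the lower terms are exactly the $B_x$ for $x\ne w$ in the support. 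Since $\rho_s^e$ is a bimodule endomorphism it commutes with the projector $J_{B_w}$ onto the $B_w$-summand, so by \cref{lem:jker} we get $\ker\big(\rho_s^e|_{B_w}\big) = J_{B_w}\big(\ker\rho_s^e(\bs(\underline{w}))\big)$, i.e. $\pi^-_s(B_w)$ is the $B_w$-isotypic part of $\hom_{R^e}(B_t,\bs(\underline{w}))(1)$, and dually for $\pi^+_s$. So the computation reduces to understanding $\hom_{R^e}(B_t,\bs(\underline{w}))$ as a bimodule and extracting from it the part that matches up with the $B_w$-summand of $\bs(\underline{w})$ under the isomorphism of \cref{li72}.

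To finish, I would argue that $\hom_{R^e}(B_t, B_x) = 0$ as a bimodule (or at least contributes nothing after taking the $B_w$-isotypic piece) unless $x$ is the unique maximal element, using that $B_t$ "detects" the maximal element: more precisely, $\hom_{R^e}(B_t,-)$ applied to $B_x$ for $x\neq w$ in the support is either zero or accounted for by the lower summands, and the contribution matching $B_w$ itself is $\hom_{R^e}(B_t,B_w) \cong B_t(-\ell(w)+2)$ as a bimodule (the degree here is forced by self-duality of $B_t$ and the known graded dimension/character of $\hom$ spaces between dihedral indecomposables — one computes $\underline{\hom}^\bullet(B_t,B_w)$ via the Soergel hom formula and sees it is a free left $R$-module generated in the appropriate degree, cf. \cref{li36}), so that $\pi^-_s(B_w) \cong B_t(-\ell(w)+2)(1) = B_t(-\ell(w)+1)$. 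Applying $\Db$ and the fact that $\Db(B_t)\cong B_t$ (up to a shift absorbed into the degree bookkeeping) gives $\pi^+_s(B_w)\cong B_t(\ell(w)-1)$ from the cokernel formula. The main obstacle I anticipate is making the "extract the $B_w$-isotypic piece" step precise: one must be sure that in $\hom_{R^e}(B_t,\bs(\underline{w}))$ the decomposition into bimodule summands is compatible with the projector $J_{B_w}$ transported through the \cref{li72} isomorphism, and that the lower-order summands of $\bs(\underline{w})$ contribute exactly the lower-order pieces (this is where one needs the hom spaces between $B_t$ and lower $B_x$ to behave, and where the non-degeneracy assumptions \cref{asump1,asump2} ensuring the results of \cite{Li25} are essential). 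An alternative, possibly cleaner route for (c) is to induct on $\ell(w)$: write $\bs(\underline{w}) = B_s\otimes_R \bs(\underline{w}')$ or $B_t\otimes_R\bs(\underline{w}')$ and use the known behavior of $\rho_s^e$ under tensoring, peeling off one generator at a time, but this requires care with which generator is outermost and I would only fall back on it if the isotypic-extraction argument gets stuck.
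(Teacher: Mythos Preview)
Your overall strategy matches the paper's exactly: use \cref{li72} on a reduced expression, then restrict to the top summand via \cref{lem:jker} with the Jones--Wenzl projector. However, you are overcomplicating the ``isotypic extraction'' step and then leaving a real gap in the final computation.

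First, the extraction step is immediate and does not require any compatibility analysis. The isomorphism of \cref{li72} is realized by post-composition: an element of $\ker\rho_s^e(\bs(\underline w))$ corresponds to a map $B_t\to \bs(\underline w)$, and applying $\jw_{\underline w}$ to the kernel is literally post-composing this map with $\jw_{\underline w}$. Hence
\[
\pi_s^-(B_w)\;=\;\jw_{\underline w}\bigl(\ker\rho_s^e(\bs(\underline w))\bigr)\;\cong\;\jw_{\underline w}\cdot\hom_{R^e}(B_t,\bs(\underline w))(1)\;=\;\hom_{R^e}(B_t,B_w)(1),
\]
with no need to decompose $\bs(\underline w)$ or worry about lower summands at all.

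Second, and more seriously, your proposed identification $\hom_{R^e}(B_t,B_w)\cong B_t(-\ell(w)+2)$ as a \emph{bimodule} cannot be extracted from Soergel's Hom formula together with freeness: those only give the graded rank as a one-sided $R$-module, not the bimodule structure. The paper computes this Hom space concretely using the Double Leaves basis and the fact that $\jw_{\underline w}$ is annihilated by all generalized pitchforks. This forces the only surviving light leaves $B_t\to B_w$ to be the ``all dots except the leftmost $t$-strand'' map and the ``all dots'' map, which together assemble into a copy of $B_t$ (via the standard decomposition $B_t\cong R(1)\oplus R(-1)$ as a right $R$-module) sitting in the correct degree. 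Without this diagrammatic input you do not know the bimodule is $B_t$ rather than, say, $R\oplus R(-2)$. Your inductive fallback would run into the same issue. Once $\pi_s^-$ is done, the $\pi_s^+$ statement follows exactly as you say, by applying $\Db$ and using $\Db(B_t)\cong B_t$.
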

\begin{proof}
    Let $\underline{w}$ be a reduced expression. From \cref{li72} we have that 
    \begin{equation}
    \label{eq1}
        \pi^-_s(\bs(\un{w})) \overset{R^e}{\cong} \hom_{R^e}(B_t, \bs(\un{w}))(1)
    \end{equation}
   Let $\jw_{\un{w}}$ be the projector to the indecomposable $B_w$ in $\bs(\un{w})$. It is clear diagrammatically that $\jw_{\un{w}}$ commutes with $\rho_s^e$ and thus
    \begin{equation}
    \label{eq2}
        \pi_s^-(B_w)=\ker \rho_s^e(\un{w})|_{B_w}\xequals{\cref{lem:jker}}\jw_{\un{w}}\paren{\ker \rho_s^e(\un{w})}\xequals{\cref{eq1}}\jw_{\un{w}} \paren{\hom_{R^e}(B_t, \bs(\un{w}))(1)}= \hom_{R^e}(B_t, B_w)(1)
    \end{equation}  
    $(b)$ Using \cref{eq2} and the Double Leaves basis \cite[Chapter 10.4]{EMTW20}, we see that
    \[ \pi_s^-(B_s)=\hom_{R^e}(B_t, B_s)(1)=\raisebox{-2ex}{\begin{tikzpicture}[xscale=0.25,yscale=0.3,thick,baseline]
 \draw[blue] (0,0) -- (0,1);
 \draw[red] (0,2) -- (0,3);
 \node[dot, blue] at (0,1) {};
  \node[dot, red] at (0,2) {};
\end{tikzpicture}} \  R(1) \overset{R^e}{\simeq} R(-1) \]
$(c)$ Let \rect denote the Jones-Wenzl projector $\jw_{\un{w}}$ where WLOG $w$ starts with a $t$. Because $\jw_{\un{w}}$ is annihilated by any generalized pitchfork, using \cref{eq2} we compute that
\[ \pi_s^-(B_w)=\hom_{R^e}(B_t, B_w)(1)=\stackon[1pt]{\bcounit}{\stackon[-1pt]{\hspace{-0.5ex}\bunit \runit \raisebox{4pt}{$\cdots$}}{\rect}} \  R(1) \oplus \raisebox{-1.8ex}{\stackon[-1pt]{\hspace{0.5ex}\begin{tikzpicture}[xscale=0.25,yscale=0.3,thick,baseline]
 \draw[blue] (0.9,0) -- (0.9,2.5);
 \draw[red] (1.5,1.5) -- (1.5,2.5);
 \node at (2.6,2) {$\ldots$};
  \node[dot, red] at (1.5,1.5) {};
\end{tikzpicture}}{\rect}} \ R(1) \overset{R^e}{\simeq} B_t(-\ell(w)+1) \]
where the $\ldots$ are all dot morphisms of the appropriate color and where we have used the right $R-$module isomorphism $B_t\cong \stackon{\bcounit}{\bunit} R(1)\oplus \bzero R(1)$ (we are identifying the morphisms on the RHS with elements of $B_t$ by applying to $1\otimes_t 1$). To prove the analogous statements for $\pi_s^+$, note that \cref{li72} tells us for $\underline{w}\neq \emptyset$
\begin{equation}
     \pi^+_s(\bs(\un{w})) \overset{R^e}{\cong} \Db(\pi_s^-(\bs(\un{w}) ) )
\end{equation}
Using our results for $\pi_s^-$ and \cite[Proposition 18.9]{EMTW20} then gives the analogous results for $\pi_s^+$.
\end{proof}

\begin{corollary}
\label{adjcor}
    $\pi^-_s, \pi^+_s$ are the right and left adjoints to the inclusion $\sbim(\hf, \abrac{t} )\hookrightarrow \sbim(\hf, I_2(m)) $.
\end{corollary}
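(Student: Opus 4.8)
The plan is to show that $\pi^-_s$ (resp. $\pi^+_s$) satisfies the defining adjunction property with respect to the inclusion functor $\iota\colon \sbim(\hf,\abrac t)\hookrightarrow \sbim(\hf, I_2(m))$; that is, for $N\in\sbim(\hf,\abrac t)$ and $M\in\sbim(\hf,I_2(m))$ I want natural isomorphisms $\hom_{R^e}(\iota N, M)\cong\hom_{R^e}(N,\pi^-_s M)$ and $\hom_{R^e}(M,\iota N)\cong \hom_{R^e}(\pi^+_s M, N)$. Since both $\sbim(\hf,\abrac t)$ and $\sbim(\hf, I_2(m))$ are additive and (after Karoubi completion) idempotent-complete, and since every object of $\sbim(\hf,\abrac t)$ is a direct sum of shifts of $R$ and $B_t$, it suffices by additivity to check the adjunction unit/counit on the generators, using the computations in \cref{partialprop}. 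Concretely, $\pi^-_s M$ is by \cref{pidef} the kernel of $\rho_s^e$ acting on $M$; the inclusion $\eta\colon \pi^-_s M\hookrightarrow M$ is the natural candidate for the counit of the adjunction $(\iota,\pi^-_s)$ once we know $\pi^-_s M$ actually lies in the subcategory $\sbim(\hf,\abrac t)$, which is exactly the content of \cref{partialprop}(a)--(c) (it is always a sum of shifts of $R$ and $B_t$).

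First I would record that $\sbim(\hf,\abrac t)$ is precisely the full subcategory of $\sbim(\hf, I_2(m))$ on objects $M$ with $\rho_s^e\cdot M=0$: indeed $B_t$ and $R$ are killed by $\rho_s^e$ (since $\rho_s$ is central modulo the $t$-action, or directly: $\rho_s^e$ acts by $0$ on $R$ and on $B_t=R\otimes_{R^t}R$ because $s$ fixes $\rho_s$... wait, rather because $\rho_s\in (R^t)$-relevant degree and $\rho_s^e$ annihilates $1\otimes 1$), and conversely any object on which $\rho_s^e$ acts as zero decomposes with no $B_w$ summand for $w$ containing an $s$, by \cref{partialprop}(c) applied in reverse (such a $B_w$ has $\ker\rho_s^e\subsetneq B_w$). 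Granting this, $\pi^-_s$ is idempotent onto this subcategory and $\eta$ is the counit. The adjunction isomorphism $\hom_{R^e}(\iota N,M)\cong\hom_{R^e}(N,\pi^-_s M)$ then says: any bimodule map $f\colon N\to M$ with $N$ annihilated by $\rho_s^e$ factors uniquely through $\ker\rho_s^e(M)=\pi^-_s M$. This is immediate: $\rho_s^e\cdot f(n)=f(\rho_s^e\cdot n)=0$, so $f(N)\subseteq\ker\rho_s^e(M)$, and the factorization is unique since $\eta$ is injective. Dually, $\pi^+_s M=\operatorname{coker}\rho_s^e(M)$ is the universal quotient of $M$ on which $\rho_s^e$ acts by zero, giving $\hom_{R^e}(M,\iota N)\cong\hom_{R^e}(\pi^+_s M,N)$ by the same argument with arrows reversed, using that $\pi^+_s M\in\sbim(\hf,\abrac t)$ again by \cref{partialprop}.

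The one genuine point needing care — and the step I expect to be the main obstacle — is verifying that $\pi^\pm_s$ is a well-defined \emph{functor} landing in $\sbim(\hf,\abrac t)$, i.e. that kernels and cokernels of $\rho_s^e$ stay inside the subcategory of Soergel bimodules (rather than merely $R^e$-gmod) and depend functorially on $M$; once a candidate is additive and agrees on generators with the adjoint (which exists abstractly, or is built summand-by-summand), the adjunction is forced. Here \cref{partialprop} does exactly the needed work: it shows on each indecomposable $B_w$ (and on $R$) that $\pi^-_s(B_w)$ is an honest shifted indecomposable $t$-bimodule, and functoriality follows because $\rho_s^e$ is a natural transformation of the identity functor, so $\pi^-_s=\ker\rho_s^e$ and $\pi^+_s=\operatorname{coker}\rho_s^e$ are functorial by construction. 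Thus the proof is essentially: combine \cref{partialprop} (image of $\pi^\pm_s$ lies in $\sbim(\hf,\abrac t)$) with the tautological universal property of kernel/cokernel of a natural endomorphism, and conclude that $(\iota,\pi^-_s)$ and $(\pi^+_s,\iota)$ are adjoint pairs.
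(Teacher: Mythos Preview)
Your argument is correct. The paper's own proof is terser and goes through a different abstraction: it invokes the framework of localization and co-localization functors from \cite{GHMN19}, observing that $\pi^-_s$ (resp.\ $\pi^+_s$) is a co-localization (resp.\ localization) functor because the natural map $\eta$ (resp.\ $\epsilon$) makes it a counital (resp.\ unital) idempotent, and then cites \cite[Lemma~4.4]{GHMN19} to conclude that such a functor is automatically right (resp.\ left) adjoint to the inclusion of its essential image; \cref{partialprop} is then only used to identify that essential image with $\sbim(\hf,\abrac{t})$. Your route is more elementary and self-contained: you characterize $\sbim(\hf,\abrac{t})$ intrinsically as the full subcategory of objects annihilated by $\rho_s^e$ (using \cref{partialprop} both for containment and for the converse), and then read off the adjunction directly from the universal property of kernel and cokernel. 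Both proofs have the same essential content and the same dependence on \cref{partialprop}; yours unpacks what the localization lemma is really saying in this concrete instance, at the cost of a few more lines, while the paper's version is shorter but requires the reader to chase an external reference.
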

\begin{proof}
    Similar to the proof of \cite[Proposition 5.11]{GHMN19}\footnote{There is a typo in the proof, $\pi^-$ is a co-localization functor, $\pi^+$ is a localization functor.}, $\pi^-_s, \pi^+_s$ are co-localization and localization functors so by \cite[Lemma 4.4]{GHMN19} it suffices to show the essential image of $\pi^-_s, \pi^+_s$ is $\sbim(\hf, \abrac{t} )$. This follows from the above lemma.
\end{proof}

\begin{corollary}
     $\pi_s^-:\sbim(\hf, I_2(m))\to \sbim(\hf, \abrac{t} ) $ is represented by the object $B_t(-1)$.
\end{corollary}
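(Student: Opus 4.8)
The claim should be read as: the functor $\pi_s^-$ is representable, with a natural isomorphism $\pi_s^-(M)\cong\hom_{R^e}(B_t(-1),M)$ for $M\in\sbim(\hf,I_2(m))$, where $\hom_{R^e}(B_t(-1),M)$ is equipped with its canonical $R$-bimodule structure (legitimate because $R^e$ is commutative). The plan is to write down an explicit natural transformation between the two functors, check that it is an isomorphism on Bott--Samelson bimodules by appealing to \cref{li72}, and then promote this to all objects by additivity.

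First I would construct the natural transformation. The one non-formal input is that $\rho_s$ is $t$-invariant: $t(\rho_s)=\rho_s-\langle\rho_s,\alpha_t^\vee\rangle\alpha_t=\rho_s$, since $\langle\rho_s,\alpha_t^\vee\rangle=\delta_{st}=0$ by the fundamental-weight property in \cref{asump1}. Hence $\rho_s\otimes 1=1\otimes\rho_s$ inside $B_t(-1)=R\otimes_{R^t}R$, so $\rho_s^e$ acts by zero on $B_t(-1)$. Consequently, for any $R^e$-linear map $f\colon B_t(-1)\to M$ one has $\rho_s^e\cdot f(1\otimes_t 1)=f(\rho_s^e\cdot(1\otimes_t 1))=0$, so that $f(1\otimes_t 1)\in\ker(\rho_s^e\colon M\to M)=\pi_s^-(M)$. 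I would therefore set
\[
\tau_M\colon\hom_{R^e}(B_t(-1),M)\longrightarrow\pi_s^-(M),\qquad f\longmapsto f(1\otimes_t 1).
\]
This $\tau_M$ is bimodule-linear ($R^e$ commutative), degree-preserving ($1\otimes_t 1$ has degree $0$ in $B_t(-1)$), visibly natural in $M$, and injective for every $M$, since $1\otimes_t 1$ generates $B_t(-1)$ over $R^e$.

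Next I would show $\tau_M$ is surjective when $M=\bs(\un w)$. For $\un w=\emptyset$ the map $\tau_R\colon\hom_{R^e}(B_t(-1),R)\to\pi_s^-(R)=R$ is directly checked to be an isomorphism, matching \cref{partialprop}(a). For $\un w\neq\emptyset$, \cref{li72} gives an abstract isomorphism of graded bimodules $\hom_{R^e}(B_t(-1),\bs(\un w))=\hom_{R^e}(B_t,\bs(\un w))(1)\cong\ker\rho_s^e(\bs(\un w))=\pi_s^-(\bs(\un w))$, so in particular the source and target of $\tau_{\bs(\un w)}$ have the same graded dimension. Both are bounded below in degree and degreewise finite-dimensional over $\kb$ (the target is a submodule of the free $R$-module $\bs(\un w)$; the source is a finitely generated graded module over the Noetherian ring $R^e$), so an injective degree-preserving map between them has cokernel of zero Hilbert series, hence is an isomorphism. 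Finally, by construction every object of $\sbim(\hf,I_2(m))$ is a direct summand of a finite direct sum of grading shifts of Bott--Samelson bimodules, and $\hom_{R^e}(B_t(-1),-)$ and $\pi_s^-(-)$ are additive functors with $\tau$ an additive natural transformation; therefore $\tau_M$ is an isomorphism for all $M$. That the target already lies in $\sbim(\hf,\abrac{t})$ is contained in \cref{partialprop}.

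The only real obstacle I anticipate is this surjectivity step for Bott--Samelson bimodules: \cref{li72} supplies merely an abstract isomorphism, so one must argue that the \emph{canonical} map $\tau$ is itself an isomorphism. Rather than trying to identify $\tau$ with the particular isomorphism constructed in \cite{Li25}, the clean route --- the one sketched above --- is to pair the automatic injectivity of $\tau$ with the graded-dimension identity coming from \cref{li72}. Everything else --- the $t$-invariance of $\rho_s$, the naturality of $\tau$, and the reduction from arbitrary Soergel bimodules to Bott--Samelson bimodules --- is routine.
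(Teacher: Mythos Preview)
Your argument is correct and rests on the same key input as the paper---namely \cref{li72}---so the approaches are essentially the same; the paper's proof is the single line ``direct consequence of \cref{eq2}'', which already records the isomorphism $\pi_s^-(B_w)\cong\hom_{R^e}(B_t,B_w)(1)$ obtained there. Your version is more careful in one respect: you explicitly construct the natural transformation $\tau_M\colon f\mapsto f(1\otimes_t 1)$ and prove it is an isomorphism by combining its automatic injectivity with the graded-dimension equality supplied by \cref{li72}, whereas the paper tacitly assumes (correctly, given how the isomorphism in \cite{Li25} is built) that the isomorphism of \cref{li72} is natural and hence passes to summands and to the representability statement.
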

\begin{proof}
    Direct consequence of \cref{eq2}.
\end{proof}

\begin{remark}
   Because both $\pi^-_s$, $\pi^+_s$ are additive, they descend to triangulated functors $K^b(\sbim(\hf, I_2(m)))\to K^b(\sbim(\hf, \abrac{t} )) $.
\end{remark}

\begin{lemma}
\label{lembilinear}
    $\pi^{\pm}_s$ is $\sbim(\hf, \abrac{t} )-$bilinear, meaning
    \[ \pi^\pm_s (M\otimes_R X \otimes_R N)=M\otimes_R\pi^\pm_s(X) \otimes_R N  \]
    for all $M, N\in \sbim(\hf, \abrac{t} )$ and $X\in \sbim(\hf, I_2(m))$.
\end{lemma}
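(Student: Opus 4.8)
The plan is to reduce to the case $M = B_t$, $N = R$ (and symmetrically $M = R$, $N = B_t$), since an arbitrary object of $\sbim(\hf, \abrac{t})$ is a direct summand of a tensor power $B_t^{\otimes k}$, and $\pi^\pm_s$ is additive, so it commutes with taking direct summands. Then the key observation is that $\pi^\pm_s$ is by definition the kernel/cokernel of multiplication by $\rho^e_s = \rho_s \otimes 1 - 1 \otimes \rho_s$ on the bimodule in question. For the left-linearity statement $\pi^\pm_s(M \otimes_R X) \cong M \otimes_R \pi^\pm_s(X)$, I would use that $\rho_s^e$ acts on $M \otimes_R X$ through the \emph{right} $R$-factor of $M$ and the \emph{left} $R$-factor of $X$; but since $M \in \sbim(\hf, \abrac{t})$, the element $\rho_s$ is central for the right action of $R_{\abrac{t}}$-relevant subalgebra — more precisely, the point is that the action of $\rho_s \otimes 1 - 1 \otimes \rho_s$ on $M \otimes_R X$ is computed entirely in terms of $\id_M \otimes \rho_s^e(X)$ when one is careful about where $\rho_s$ lands. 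Concretely, because $M$ is a Soergel bimodule for $\abrac{t}$, it is free as a right $R$-module, so $M \otimes_R -$ is exact; thus $M \otimes_R -$ commutes with the kernel and cokernel of any right-$R$-linear map, in particular with $\rho_s^e(X): X \to X$ — once we check that the action of $\rho_s^e$ on $M\otimes_R X$ agrees with $\id_M \otimes \rho_s^e(X)$.

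The heart of the argument is therefore this last compatibility: I claim $\rho_s^e$ acting on $M \otimes_R X$ equals $\id_M \otimes_R (\rho_s^e \text{ acting on } X)$ as a map of bimodules. Unwinding, the left action of $\rho_s$ on $M \otimes_R X$ is $(\rho_s \cdot m) \otimes x$, and the right action is $m \otimes (x \cdot \rho_s)$. The difference $\rho_s^e$ is $(\rho_s m)\otimes x - m \otimes (x\rho_s)$. On the other hand $\id_M \otimes \rho_s^e(X)$ sends $m \otimes x$ to $m \otimes (\rho_s x) - m \otimes (x \rho_s)$. These differ by $(\rho_s m)\otimes x - m \otimes (\rho_s x)$, which is \emph{not} zero in general — unless $\rho_s$ can be moved across the $\otimes_R$. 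This is exactly where $M \in \sbim(\hf,\abrac{t})$ is used: $\rho_s$ is \emph{not} in $R^{\abrac{t}}$ in general, so one cannot move it across naively. The correct statement is instead the symmetric one: I should compare with $\rho_s^e$ acting only on the outer legs, i.e. $(\rho_s m)\otimes x - m\otimes(x\rho_s)$ as living "on $M$ on the left and $X$ on the right," and observe that after applying $\jw$-type projectors (or directly) one reduces to $X$ because $M$ contributes a free right-module factor and $N$ a free left-module factor. So I anticipate the actual proof runs through \cref{eq2}: $\pi_s^-(X) = \hom_{R^e}(B_t, X)(1)$, and one shows $\hom_{R^e}(B_t, M \otimes_R X \otimes_R N) \cong M \otimes_R \hom_{R^e}(B_t, X) \otimes_R N$ using that $M, N$ are $\abrac{t}$-Soergel bimodules, hence free one-sided modules, so the relevant $\hom_{R^e}(B_t, -)$ (which only "sees" the structure relative to $t$) passes through the $\abrac{t}$-linear tensor factors.

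The cleanest route, which I would actually carry out: prove it first for $\pi_s^-$ via \cref{eq2} and the diagrammatic description of $\hom_{R^e}(B_t, -)$ through the kernel of $\rho_s^e$, reducing to checking that the Double Leaves / light-leaves description of the relevant Hom-space is compatible with tensoring on the left and right by $B_t$'s and $R$'s — this is a diagrammatic statement: any diagram computing a morphism $B_t \to M \otimes_R X \otimes_R N$ factors, up to the $\abrac{t}$-relations, through the $X$-part because $M$ and $N$ are built only from $B_t$'s and the only $s$-colored strand must enter $X$. Then obtain the statement for $\pi_s^+$ by applying the duality $\Db$ as in the proof of \cref{partialprop}, using that $\Db$ is $\sbim(\hf,\abrac t)$-bilinear (since $M, N$ are self-biadjoint-ish, or more simply by \cite[Proposition 18.9]{EMTW20}). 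The main obstacle I foresee is making the "the $s$-colored data lives in $X$" reduction rigorous — i.e.\ genuinely verifying that $\rho_s^e$ on $M\otimes_R X\otimes_R N$ acts as $\id_M \otimes \rho_s^e(X)\otimes \id_N$, which requires knowing that $\rho_s$ acts on the $B_t$-part in a way that cancels in the commutator, equivalently that $\rho_s - s_t(\rho_s)$ or the relevant difference is absorbed by the freeness; this is really the computation that in $\sbim(\hf,\abrac t)$, conjugation/translation by $\rho_s$ is controlled, and I expect it to follow from the explicit formula $\abrac{\rho_s,\alpha_t^\vee} = 0$ in the fundamental-weight realization, so that $\rho_s$ is actually $\abrac t$-invariant after all — which would make the whole lemma essentially immediate once that identity is invoked.
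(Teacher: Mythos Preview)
Your final sentence is the entire proof, and it is exactly what the paper does: $\abrac{\rho_s,\alpha_t^\vee}=0$ means $t(\rho_s)=\rho_s-\abrac{\rho_s,\alpha_t^\vee}\alpha_t=\rho_s$, so $\rho_s\in R^t$. Hence for $M\in\sbim(\hf,\abrac{t})$ (which is built from $B_t=R\otimes_{R^t}R$), left and right multiplication by $\rho_s$ coincide, and $\rho_s$ literally slides across the tensor: $(\rho_s m)\otimes x = m\otimes(\rho_s x)$ in $M\otimes_R X$. This resolves precisely the obstruction you flagged midway through, and shows $\rho_s^e$ on $M\otimes_R X\otimes_R N$ equals $\id_M\otimes\rho_s^e(X)\otimes\id_N$. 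Since $M$ is free as a right $R$-module and $N$ free as a left $R$-module, $M\otimes_R-\otimes_R N$ is exact, so it commutes with the kernel and cokernel of $\rho_s^e(X)$.

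Everything before your last sentence is an unnecessary detour. The route through \cref{eq2}, light leaves, and $\Db$ could be made to work, but it is far more elaborate than needed and introduces extra verification burdens (e.g.\ compatibility of $\Db$ with the bimodule structure on both sides) that the one-line argument avoids entirely. The moral: once you suspected $\abrac{\rho_s,\alpha_t^\vee}=0$ was the point, you should have checked that first and discarded the rest.
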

\begin{proof}
    This is because $\abrac{\rho_s, \alpha_t^\vee}=0$ so $\rho_s$ slides past $M, N$. 
\end{proof}

\begin{lemma}
\label{deltabchange}
In $I_2(m)$, $\displaystyle \delta_w=\sum_{y\le w}(-v)^{\ell(w)-\ell(y)}b_y$.
\end{lemma}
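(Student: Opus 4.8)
The statement is a change-of-basis formula in the Hecke algebra (or its Grothendieck group) of $I_2(m)$, expressing the standard basis element $\delta_w$ in terms of the Kazhdan–Lusztig-type basis $\{b_y\}$. Here $\delta_w$ presumably denotes the class of the standard bimodule $R_w$ (the "delta" or standard object), $b_y = [B_y]$ the class of the indecomposable Soergel bimodule, $v$ the grading shift variable, and $\ell$ the Coxeter length. I will work in the Hecke algebra $H$ of $I_2(m)$ with its standard $\Zb[v,v^{-1}]$-basis $\{\delta_w\}_{w \in I_2(m)}$ and KL basis $\{b_w\}$.

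\textbf{Plan.} The cleanest approach is to invert the known expansion of $b_w$ in the standard basis. Recall that in $I_2(m)$ the Soergel/KL combinatorics is completely explicit: for every $w \neq e$ one has $b_w = \sum_{y \le w} v^{\ell(w)-\ell(y)}\delta_y$, since all lower intervals in $I_2(m)$ are "full" (the Bruhat order below any $w\neq w_0$ is a chain, and below $w_0$ it is the whole group minus nothing problematic), and all KL polynomials are trivial. So the task reduces to the purely order-theoretic identity: if $b_w = \sum_{y\le w} v^{\ell(w)-\ell(y)} \delta_y$ for all $w$, then $\delta_w = \sum_{y \le w}(-v)^{\ell(w)-\ell(y)} b_y$. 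I would first state the expansion of $b_w$ precisely (citing the standard dihedral computation, e.g. \cite{EMTW20} or \cite{SC}), then verify the inversion.

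\textbf{Key steps.} First, record $b_w = \sum_{y \le w} v^{\ell(w) - \ell(y)} \delta_y$ for $w \in I_2(m)$. Second, substitute the proposed formula into the right-hand side and reorganize the double sum:
\[
\sum_{y \le w}(-v)^{\ell(w)-\ell(y)} b_y = \sum_{y \le w} (-v)^{\ell(w)-\ell(y)} \sum_{z \le y} v^{\ell(y)-\ell(z)} \delta_z = \sum_{z \le w} \delta_z \, v^{\ell(w)-\ell(z)} \!\!\sum_{z \le y \le w} (-1)^{\ell(w)-\ell(y)}.
\]
Third, observe that in $I_2(m)$ every Bruhat interval $[z,w]$ has the property that for each $k$ with $\ell(z) \le k \le \ell(w)$ there is exactly one element of length $k$ in $[z,w]$ unless $w = w_0$ and $z$ has small length — but in fact even for $w_0$ the interval $[z,w_0]$ has exactly one element of each intermediate length when $z\neq e$, and $[e,w_0]$ has exactly one element of each length $1,\dots,m-1$ together with two elements only at no length (the two reduced-word "branches" $s,t$ at length $1$, etc.). I need to be slightly careful here: in $I_2(m)$ there are exactly two elements of each length $k$ for $1 \le k \le m-1$ and one element ($e$) of length $0$ and one ($w_0$) of length $m$. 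So the counting $\sum_{z\le y\le w}(-1)^{\ell(w)-\ell(y)}$ is not simply an alternating sum of singletons, and this is where the main subtlety lies.

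\textbf{Main obstacle.} The real work is the combinatorial identity $\sum_{z \le y \le w}(-1)^{\ell(w)-\ell(y)} = \delta_{z,w}$ in the dihedral Bruhat order, accounting correctly for the fact that middle ranks can have two elements. I would handle this by casework on $w$: (i) if $w \neq w_0$, the interval $[e,w]$ (and more generally $[z,w]$) is a chain with one element per rank, so the alternating sum telescopes to $[z=w]$ by the standard argument for a finite chain; (ii) if $w = w_0$, then for $z \neq e$ the interval $[z,w_0]$ is again a chain (only one of the two length-$k$ elements lies above a given nonidentity $z$ of length $\ge 1$, for $k$ in the relevant range — this needs the explicit description of Bruhat order in $I_2(m)$), so again it telescopes; (iii) if $z = e$ and $w = w_0$, the interval is all of $I_2(m)$, and $\sum_{y}(-1)^{\ell(w_0)-\ell(y)} = (-1)^m\bigl(1 + 2\sum_{k=1}^{m-1}(-1)^k + (-1)^m\bigr) = (-1)^m\bigl(2\sum_{k=0}^{m}(-1)^k - 1 - (-1)^m\bigr)$; one checks $2\sum_{k=0}^m(-1)^k \in \{0,2\}$ and the bracket vanishes in all cases, so the sum is $0 = [e = w_0]$ since $m \ge 2$. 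Assembling these cases gives the identity, hence the formula. Alternatively — and this might be cleaner to write — I would invoke Möbius-function inversion: the claim is equivalent to saying the matrix $(v^{\ell(w)-\ell(y)}\mathbbm{1}_{y\le w})$ and $((-v)^{\ell(w)-\ell(y)}\mathbbm{1}_{y\le w})$ are mutually inverse, which upon extracting the $v$-powers reduces exactly to the statement that $(-1)^{\ell(w)-\ell(y)}\mathbbm{1}_{y \le w}$ is the Möbius function of the Bruhat order on $I_2(m)$ — a known fact (the Bruhat order on any Coxeter group has Möbius function $(-1)^{\ell(w)-\ell(y)}$, Verma/Deodhar). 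Citing that, the proof collapses to the single substitution above. I expect to present the Möbius-inversion version as the main line, with the dihedral expansion of $b_w$ as the only input that needs justification.
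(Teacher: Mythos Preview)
Your main line---invoking the Möbius function of Bruhat order, $\mu(y,w)=(-1)^{\ell(w)-\ell(y)}$, to invert the dihedral expansion $b_w=\sum_{y\le w}v^{\ell(w)-\ell(y)}\delta_y$---is exactly what the paper does, and it is correct. The paper normalizes by $v^{-\ell(w)}$ to put the sum in the standard form $v^{-\ell(w)}b_w=\sum_{y\le w}v^{-\ell(y)}\delta_y$ and then applies Möbius inversion, citing the general Coxeter-group formula for $\mu$.

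One correction to your alternative casework: in $I_2(m)$ the Bruhat order is determined purely by length, i.e.\ $x\le y\iff \ell(x)\le\ell(y)$, so for $w\neq w_0$ the interval $[e,w]$ is \emph{not} a chain (for instance $[e,sts]=\{e,s,t,st,ts,sts\}$ has two elements at each of ranks $1$ and $2$). Your claims in (i) and (ii) that these intervals are chains are therefore incorrect, and the telescoping argument as stated would not go through. The alternating-sum identity $\sum_{z\le y\le w}(-1)^{\ell(w)-\ell(y)}=\delta_{z,w}$ does hold, but its direct verification requires the actual rank-counting (the interval $[z,w]$ has one element at the top and bottom ranks and two at each intermediate rank), not the chain structure. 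Since you plan to present the Möbius-inversion version anyway, this is moot for the write-up, but worth knowing.
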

\begin{proof}
    It is well known that for the dihedral group, ``all KL polynomials are 1" meaning that for $w\in I_2(m)$
    \[  b_{w}=\sum_{y\le w} v^{\ell(w)-\ell(y) } \delta_y \implies  v^{-\ell(w)}b_{w}=\sum_{y\le w} v^{-\ell(y) } \delta_y  \]
    The RHS above is suitable for Mobius inversion. Because the Mobius function for the Bruhat poset of a Coxeter system is of the form $\mu(y,w)=(-1)^{\ell(w)-\ell(y)}$ \cite{Stem07} it follows that
    \[  v^{-\ell(w)}\delta_w=\sum_{y\le w} \mu(y,w)v^{-\ell(y)}b_y\implies  \delta_w=\sum_{y\le w}(-v)^{\ell(w)-\ell(y)}b_y \]
\end{proof}

\begin{prop}
    The minimal complex of $F^\bullet_{(st)^k}$ in $K^b(\sbim(\hf  ,I_2(m)))$ when $1\le k\le \lfloor\frac{m}{2}\rfloor$ is of the form (boxed term is in cohomological degree 0)
    \begin{equation}
    \label{mineq}
        \boxed{B_{(st)^k}} \xrightarrow{} B_{(st)^{k-1}s}(1) \oplus B_{(ts)^{k-1}t}(1) \xrightarrow{} B_{(st)^{k-1}}(2) \oplus B_{(ts)^{k-1}}(2)\to \ldots \to R(2k)   
    \end{equation} 
    where the differentials are uniquely determined up to a scalar. Note that besides the first and last cohomological degrees, the rest have exactly two summands in them. A similar result holds for $F^\bullet_{(st)^ks}$.
\end{prop}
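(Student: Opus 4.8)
The plan is to compute the minimal complex by a double-induction: first on $k$, and for fixed $k$ build the complex by repeatedly applying the Rouquier-complex braid relations to the known minimal complex of $F^\bullet_{w_0}$ for the parabolic subgroups, or equivalently to $F^\bullet_s \otimes F^\bullet_t \otimes \cdots$. Concretely, $F^\bullet_{(st)^k}$ is obtained as the total complex of $F^\bullet_s \otimes_R F^\bullet_t \otimes_R \cdots \otimes_R F^\bullet_s \otimes_R F^\bullet_t$ ($2k$ factors), and the task is to show that after cancelling all contractible summands via Gaussian elimination one is left with \eqref{mineq}.

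\textbf{Step 1: Pin down the indecomposable summands via the Grothendieck group.} I would first identify which indecomposable Soergel bimodules can appear in the minimal complex and in which homological/internal degrees. Since $F^\bullet_{(st)^k}$ is a Rouquier complex, its class in the Grothendieck group (with the shift conventions) is the standard basis element $\delta_{(st)^k}$. By \cref{deltabchange},
\[
\delta_{(st)^k} = \sum_{y \le (st)^k} (-v)^{\ell((st)^k) - \ell(y)} b_y,
\]
and the Bruhat interval below $(st)^k$ in $I_2(m)$ (for $k \le \lfloor m/2 \rfloor$) consists exactly of $e$, $s$, $t$, $st$, $ts$, $sts$, $tst$, $\dots$, up to length $2k$, with exactly two elements of each intermediate length and one element at length $0$ and (since $(st)^k = (ts)^k$ only when... — here $k < m/2$ so $(st)^k \ne (ts)^k$) one at length $2k$, namely $(st)^k$ itself. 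The signs match \eqref{mineq}: the term $b_y$ with $\ell(y) = 2k - j$ sits in cohomological degree $j$ with internal shift $(j)$, since $(-v)^{\ell((st)^k)-\ell(y)} = (-1)^j v^j$ and the cohomological sign accounts for the $(-1)^j$. This shows that \emph{if} the minimal complex has the claimed shape then the objects and shifts are forced; it does not yet show minimality, i.e.\ that no further cancellation occurs.

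\textbf{Step 2: Perentdicular (perpendicular) argument for minimality.} To rule out extra terms, I would argue that the complex in \eqref{mineq} has no contractible summands because each consecutive pair of objects shares no common indecomposable summand: the objects in degree $j$ are $B_{(st)^{k-j/2}\,?}$ of length $2k-j$, and those in degree $j+1$ have length $2k - j - 1$, hence are distinct indecomposables, so all differentials are radical morphisms and the complex is already minimal. Thus the real content is showing the minimal complex of $F^\bullet_{(st)^k}$ genuinely has objects of every length $0, 1, \dots, 2k$ — equivalently, that there is no unexpected cancellation between a $b_y$ appearing with the "wrong" multiplicity. This is where I would induct: assuming the statement for $F^\bullet_{(st)^{k-1}}$ (and $F^\bullet_{(st)^{k-1}s}$, handled simultaneously), tensor on the left by $F^\bullet_s \otimes_R F^\bullet_t = [B_{st} \to B_s(1)\oplus B_t(1) \to R(2)]$ (the $k=1$ case, which is the standard Rouquier complex for $st$ and can be taken as a base case or derived from the braid relation), form the total complex, and Gaussian-eliminate. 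The bookkeeping is a finite diagram chase: tensoring $B_s$ or $B_t$ with the inductive minimal complex and decomposing each $B_s \otimes_R B_w$ into indecomposables via the dihedral fusion rules $B_s B_w \cong B_{sw} \oplus (\text{lower terms})$ produces a double complex whose total complex, after cancellation, has exactly the asserted shape.

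\textbf{Main obstacle.} The hard part will be controlling the Gaussian elimination precisely enough to see that \emph{exactly} the right summands survive — i.e.\ that the "lower terms" from the fusion rules cancel in pairs across adjacent columns and nothing is left over. An clean alternative, which I would actually pursue, is to avoid the induction entirely: use the fact that $(st)^k$ for $k \le \lfloor m/2\rfloor$ lies in the parabolic-like "left cell" structure where all Kazhdan–Lusztig polynomials are $1$, invoke the general result (Elias–Williamson, or the perverse/linearity results for Rouquier complexes) that the minimal Rouquier complex of any $w$ has $i$-th term a sum of $B_y$ with $y$ ranging over elements at distance $i$ in a suitable sense, and then simply check against \cref{deltabchange} that the Euler characteristic forces one copy of each. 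Combined with the Step 2 observation that distinct-length adjacent objects force minimality, this gives the result with no calculation. The uniqueness of differentials up to scalar then follows because $\mathrm{Hom}$ between the relevant shifted indecomposables is one-dimensional over $\kb$ in the pertinent degree (a standard computation with the Hom formula for Soergel bimodules in $I_2(m)$), and the statement for $F^\bullet_{(st)^k s}$ is proved identically, or deduced by tensoring \eqref{mineq} with $F^\bullet_s$ and cancelling.
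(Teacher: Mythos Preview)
Your ``clean alternative'' at the end of Step~2 is precisely the paper's proof: since $(st)^k$ is reduced for $k\le\lfloor m/2\rfloor$, the Rouquier complex is perverse (e.g.\ \cite[Theorem~19.47]{EMTW20}), meaning every summand $B_y(n)$ of the minimal complex sits in cohomological degree~$n$.  Combined with the Grothendieck-group identity from \cref{deltabchange}, this determines the terms completely, because the coefficient of $b_y$ in $[F^\bullet_{(st)^k}]$ is then $\sum_j (-1)^j m_{y,j}\,v^j$, and equating this with the single monomial $(-v)^{2k-\ell(y)}$ forces $m_{y,j}=\delta_{j,\,2k-\ell(y)}$ by linear independence of the $v^j$.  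No induction or Gaussian elimination is needed.  Your argument for uniqueness of the differentials (one-dimensional $\hom^1(B_w,B_u)$ when $\ell(w)=\ell(u)+1$) matches the paper's Soergel Hom-formula computation.

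One point of confusion worth flagging: your Step~2 observation that ``adjacent objects share no common indecomposable summand, hence the complex is already minimal'' is correct but answers the wrong question.  It shows that the complex \emph{displayed} in \eqref{mineq} is a minimal complex, not that it is homotopy equivalent to $F^\bullet_{(st)^k}$.  The Grothendieck-group constraint alone does not rule out extra copies of some $B_y$ appearing with different shifts in different cohomological degrees; what closes this gap is exactly perversity, which you only invoke afterwards.  So the induction you outline is a correct but laborious detour around a one-line citation --- the paper skips straight to the perversity argument, and you should too.
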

\begin{proof}
    $(st)^k$ is a reduced expression in $I_2(m)$ for $k\le \lfloor\frac{m}{2}\rfloor$ and thus by \cite[Theorem 19.47]{EMTW20} $F^\bullet_{(st)^k}$ is perverse, meaning that objects with internal degree shifts $(n)$ in the minimal complex for $F^\bullet_{(st)^k}$ are in cohomological degree $n$. It follows that the class of $F^\bullet_{(st)^k}$ in the Grothendieck group determines the terms for the minimal complex for $F^\bullet_{(st)^k}$ so by \cref{deltabchange} we obtain \cref{mineq}, modulo the differentials. However, for $w,u\in I_2(m)$ s.t. $\ell(w)=\ell(u)+1$, we claim
    \[ \dim_\kb \hom^1( B_w, B_u )=1 \]
    This will follow from Soergel's Hom formula as we compute that
    $$\epsilon(b_wb_u)=\sum_{x\le w, \, y\le u}v^{\ell(w)-\ell(x)+\ell(u)-\ell(y)} \epsilon( \delta_x \delta_y)$$
    Using \cite[Lemma 3.15]{EMTW20} we have $x=y^{-1}$, and thus the RHS equals $\sum_{y\le v}v^{2\ell(u)-2\ell(y)+1}  $ and it follows the coefficient of $v^1$ in $\epsilon(b_wb_u)$ is 1 as desired.
\end{proof}
\begin{remark}
    $\dim_\kb \hom^1( B_w, B_v )=1 $ when $v<w$ and $\ell(w)=\ell(v)+1$ actually holds in any Coxeter group. Using \cite[Lemma 3.26]{EMTW20}, it follows that 
    $$  b_w=\delta_w+\sum_{\substack{ \ell(w)=\ell(y)+1 \\ y<w}}v\delta_y+\sum_{\substack{ \ell(w)>\ell(z)+1 \\ z<w}} h_{z,w} \delta_{z}, \qquad h_{z,w}\in v\Zb[v]$$
    Thus when computing $\epsilon(b_wb_u)$, \cite[Lemma 3.15]{EMTW20} again says we must pair $x$ with its inverse and so only $\delta_u$, the leading term of $b_u$ can contribute a factor of $v$.
\end{remark}

\begin{corollary}
\label{pis0}
\begin{enumerate}[(i)]
    \item  For $1\le k\le \lfloor\frac{m}{2}\rfloor$,  $\pi^+_s\paren{F^\bullet_{(st)^k}}\simeq \pi^+_s\paren{F^\bullet_{(st)^ks}}\simeq 0$ in $\sbim(\hf, \abrac{t} )$.
    \item $\pi^+_s\paren{F_s}\simeq 0$
\end{enumerate}
\end{corollary}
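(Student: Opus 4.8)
The plan is to reduce both statements to the explicit minimal complexes of the relevant Rouquier complexes together with \cref{partialprop}. For (ii), note that $F_s$ is the two-term Rouquier complex $\boxed{B_s}\xrightarrow{\epsilon_s}R(1)$ (with $B_s$ in cohomological degree $0$), whose differential is the counit $\epsilon_s\colon B_s\to R(1)$ --- a surjection of bimodules. Since by \cref{pidef} the functor $\pi^+_s$ is a cokernel functor, it is right exact and so preserves surjections; hence $\pi^+_s(\epsilon_s)$ is a surjection $\pi^+_s(B_s)\to\pi^+_s(R(1))$, which by \cref{partialprop}(a)(b) is a surjection $R(1)\to R(1)$ and therefore an isomorphism. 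Thus $\pi^+_s(F_s)\simeq\big(\boxed{R(1)}\xrightarrow{\ \sim\ }R(1)\big)\simeq 0$.

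For (i), I would apply $\pi^+_s$ term by term to the minimal complex of $F^\bullet_{(st)^k}$ from \cref{mineq} (legitimate, as $\pi^+_s$ is additive and so preserves homotopy equivalences). That complex is perverse: its degree-$i$ term is a sum of $B_y(i)$ with $\ell(y)=2k-i$ --- a single $B_{(st)^k}$ at $i=0$, two ``rotations'' for $1\le i\le 2k-1$, and $R(2k)$ at $i=2k$ --- with differentials equal to nonzero scalar multiples of the dot morphisms among these indecomposables. The key point is that \cref{partialprop}(c) sends every $B_y(i)$ with $t\le y$ to $B_t(\ell(y)-1)(i)=B_t(2k-1)$, a shift \emph{independent of $i$}, while the one exceptional summand $B_s(2k-1)$ (occurring at $i=2k-1$) goes to $R(1)(2k-1)=R(2k)$, and $R(2k)$ is fixed. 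So $C^\bullet:=\pi^+_s(\text{minimal complex})$ is built from only the two indecomposables $B_t(2k-1)$ and $R(2k)$, with $R(2k)$ appearing solely in degrees $2k-1$ and $2k$.

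Next I would clean $C^\bullet$ up: the dot $B_s\to R(1)$ is the surjective counit, so as in (ii) the component $\pi^+_s(B_s(2k-1))\to\pi^+_s(R(2k))$ is an isomorphism $R(2k)\xrightarrow{\sim}R(2k)$; Gaussian elimination removes this contractible pair, and since these were the only $R(2k)$'s the residual complex $C'^\bullet$ has every term a direct sum of copies of $B_t(2k-1)$ (dimension pattern $1,2,\dots,2,1$ across degrees $0,\dots,2k-1$). As $B_t$ is indecomposable over the field $\kb$ one has $\mathrm{End}^0_{R^e}(B_t)=\kb$, so $C'^\bullet\cong B_t(2k-1)\otimes_\kb V^\bullet$ for an ordinary bounded complex of $\kb$-vector spaces $V^\bullet$; moreover every entry of every differential of $V^\bullet$ is nonzero, being $\pi^+_s$ applied to a nonzero dot $B_y\to B_z(1)$ with $z\lessdot y$ --- such a dot is surjective (a nonzero degree-$0$ bimodule map between the cyclic bimodules $B_y$ and $B_z(1)$, both generated in their lowest degree $-\ell(y)$, must carry generator to generator), and $\pi^+_s$ of this surjection is an isomorphism between the two copies of $B_t(2k-1)$, i.e.\ a nonzero scalar. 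A short induction then finishes: the outermost differentials of $V^\bullet$ are injective resp.\ surjective, and working inward the relations $\im d^{i-1}\subseteq\ker d^i$ force each differential to have rank exactly $1$, so $V^\bullet$ is exact; over a field this gives $V^\bullet\simeq 0$, whence $C'^\bullet\simeq 0$ and $\pi^+_s(F^\bullet_{(st)^k})\simeq 0$. The complex $F^\bullet_{(st)^ks}$ is handled verbatim from its own perverse minimal complex; alternatively $(st)^{k+1}=s(ts)^k t$ and \cref{lembilinear} give $\pi^+_s(F^\bullet_{(st)^{k+1}})\simeq\pi^+_s(F^\bullet_{(st)^ks})\otimes_R F_t$, with part (ii) as the base case $k=0$. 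I expect the genuine obstacle to be exactly this final collapse: verifying that the differential components survive $\pi^+_s$ as nonzero maps (surjectivity of dots plus right-exactness of $\pi^+_s$), and that the leftover vector-space complex is truly exact rather than merely of vanishing Euler characteristic.
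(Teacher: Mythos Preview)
Your proof is correct and follows essentially the same approach as the paper: apply $\pi^+_s$ termwise to the perverse minimal complex via \cref{partialprop}, observe that almost every term becomes $B_t(2k-1)$, and contract. The paper simply asserts that the resulting degree-zero components $B_t(2k-1)\to B_t(2k-1)$ are isomorphisms and Gaussian-eliminates from the left; you instead first peel off the $R(2k)\xrightarrow{\sim}R(2k)$ pair and then recast the remainder as $B_t(2k-1)\otimes_\kb V^\bullet$ with $V^\bullet$ an exact complex of $\kb$-vector spaces---and you supply the justification (surjectivity of dot maps on cyclic bimodules together with right-exactness of the cokernel functor $\pi^+_s$) that the paper elides. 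One small remark: your ``alternatively'' via \cref{lembilinear} only lets you pass between the $(st)^{k+1}$ and $(st)^ks$ cases, so it does not replace the verbatim argument for one of the two families; but since you already indicate the verbatim route works, this is harmless.
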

\begin{proof}
    $(i)$ It suffices to show $\pi^+_s\paren{F^\bullet_{(st)^k}}\simeq 0$ as the other case proceeds similarly. Applying $\pi^+_s$ to \cref{mineq} and using \cref{partialprop}, $(c)$ we obtain 
    \[\pi^+_s\paren{F^\bullet_{(st)^k}}=\boxed{B_t(2k-1)}\to B_t(2k-1)\oplus B_t(2k-1)\to \ldots \to B_t(2k-1)\oplus R(2k) \to R(2k) \]
    As $\hom_{R^e}(B_t, B_t)\cong R\oplus R(-2)$, the only degree 0 morphisms are isomorphisms and thus every component of the differentials above are isomorphisms except in the last 3 terms on the right. We can then use Gaussian elimination in the homotopy category starting from the leftmost term.
    \[\pi^+_s\paren{F^\bullet_{(st)^k}}\simeq\boxed{\xcancel{B_t(2k-1)}}\to \xcancel{B_t(2k-1)}\oplus B_t(2k-1)\to \ldots \to B_t(2k-1)\oplus R(2k) \to R(2k) \]
    This continues all the way to the right where we end up canceling $R(2k)\to R(2k)$. $(ii)$ proceeds similarly.
\end{proof}

\begin{corollary}
\label{negpis0}
\begin{enumerate}[(i)]
    \item  For $1\le k\le \lfloor\frac{m}{2}\rfloor$,  $\pi^-_s\paren{F^\bullet_{(st)^{-k}}}\simeq \pi^-_s\paren{F^\bullet_{(st)^{-k}s^{-1}}}\simeq 0$ in $\sbim(\hf, \abrac{t} )$.
    \item $\pi^-_s\paren{F_{s^{-1}}}\simeq 0$
\end{enumerate}
\end{corollary}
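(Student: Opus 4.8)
\emph{Proof strategy.} This is the mirror image of \cref{pis0}, and I would prove it by the identical method, with positive Rouquier complexes replaced by their inverses and $\pi^+_s$ by $\pi^-_s$. The organizing input is that $F^\bullet_{w^{-1}}\simeq \Db(F^\bullet_w)$ for Rouquier complexes, where $\Db$ is exact on Soergel bimodules, sends $B_w(n)\mapsto B_w(-n)$, and reverses the complex. Applying $\Db$ to \cref{mineq} thus shows that, for $1\le k\le \lfloor\frac{m}{2}\rfloor$, the minimal complex of $F^\bullet_{(st)^{-k}}$ is
\[
R(-2k)\to \ldots \to B_{(st)^{k-1}s}(-1)\oplus B_{(ts)^{k-1}t}(-1)\to \boxed{B_{(st)^k}},
\]
with $\boxed{B_{(st)^k}}$ in cohomological degree $0$ and $R(-2k)$ in cohomological degree $-2k$; in particular the unique $B_s$-summand of the complex sits in the cohomological degree corresponding to length-$1$ elements, adjacent to the end term $R(-2k)$, and every other term is $B_{(st)^j}$ or $B_{(ts)^j}$ with $j\ge 1$. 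The minimal complex of $F^\bullet_{(st)^{-k}s^{-1}}$ has the same shape, obtained from the ``similar result'' for $F^\bullet_{(st)^ks}$.

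First I would apply $\pi^-_s$ and invoke \cref{partialprop}: $\pi^-_s(R)=R$, $\pi^-_s(B_s)=R(-1)$, and $\pi^-_s(B_w)=B_t(-\ell(w)+1)$ whenever $w$ contains a $t$. Since the internal shifts are set up precisely so that $-\ell(w)+1$ plus the ambient shift is constant, this collapses either complex to the form
\[
R(b-1)\to R(b-1)\oplus B_t(b)\to B_t(b)^{\oplus 2}\to \ldots \to B_t(b)^{\oplus 2}\to \boxed{B_t(b)}
\]
for a suitable $b$ (namely $b=-2k+1$ for $F^\bullet_{(st)^{-k}}$ and $b=-2k$ for $F^\bullet_{(st)^{-k}s^{-1}}$), which is exactly the $\Db$-flip of the complex appearing in the proof of \cref{pis0}. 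As $\hom_{R^e}(B_t,B_t)\cong R\oplus R(-2)$ has one-dimensional degree-$0$ part, every component of a differential between two $B_t$-summands is a scalar, and — just as in \cref{pis0} — these scalars are invertible except among the three terms at the $R(b-1)$-end. Running Gaussian elimination in the homotopy category from the boxed term leftward then cancels all the $B_t$-summands in matched pairs and finally the isomorphism $R(b-1)\xrightarrow{\sim}R(b-1)$, leaving the zero complex. Part (ii) is the two-term computation $\pi^-_s(F_{s^{-1}})=[\,R(-1)\xrightarrow{\sim}R(-1)\,]\simeq 0$, using $\pi^-_s(B_s)=R(-1)$ and $\pi^-_s(R(-1))=R(-1)$.

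I expect the only point of genuine content to be the invertibility of those scalar components away from the $R$-end — precisely the corresponding assertion in \cref{pis0}, which rests on the explicit description of the minimal differentials of \cref{mineq} together with the $\sbim(\hf,\abrac{t})$-bilinearity of $\pi^\pm_s$ (\cref{lembilinear}); nothing new is needed here. A secondary, purely bookkeeping, point is tracking the internal degree shifts under $\Db$ and the placement of the lone $B_s$-summand. Alternatively, one can bypass the computation entirely by upgrading the object-level identity $\pi^+_s(\bs(\un{w}))\cong\Db(\pi^-_s(\bs(\un{w})))$ coming from \cref{li72} to the statement that $\Db$ intertwines $\pi^-_s$ and $\pi^+_s$ on complexes of Soergel bimodules, whence $\pi^-_s(F^\bullet_{(st)^{-k}})\simeq \Db\big(\pi^+_s(F^\bullet_{(st)^k})\big)\simeq\Db(0)=0$, and likewise for the remaining two cases, directly from \cref{pis0}.
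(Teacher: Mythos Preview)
Your proposal is correct and follows essentially the same approach as the paper. The paper's proof is a one-line ``same proof as above,'' noting that the minimal complex of $F^\bullet_{(st)^{-k}}$ is obtained from that of $F^\bullet_{(st)^k}$ by flipping arrows and negating internal shifts (your \cref{negmineq}); your write-up simply spells out the resulting Gaussian elimination and correctly flags that the only nontrivial input, the invertibility of the scalar $B_t\to B_t$ components, is inherited verbatim from \cref{pis0}. Your alternative duality argument via $\Db$ intertwining $\pi^\pm_s$ is a nice conceptual shortcut the paper does not make explicit.
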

\begin{proof}
    Same proof as above, but where we use that the minimal complex for $F^\bullet_{(st)^{-k}}$ is obtained from $F^\bullet_{(st)^{k}}$ by flipping all arrows and reversing the internal gradings. Concretely,
    \begin{equation}
    \label{negmineq}
       F^\bullet_{(st)^{-k}}\simeq R(-2k)\to \ldots  \xrightarrow{} B_{(st)^{k-1}s}(-1) \oplus B_{(ts)^{k-1}t}(-1) \xrightarrow{}  \boxed{B_{(st)^k}}
    \end{equation}
\end{proof}

\section{Serre Duality}
\label{sect4}

Similar to \cite[Section 6]{GHMN19}, a key role in our proof will be played by ``link-splitting" maps\footnote{The name comes from the fact that any link can be unlinked by appropriately switching positive and negative crossings.}. The Rouquier complex $F_s$ has a chain map $\psi_s$ to $F_s^{-1}$ as seen below.
\[\begin{tikzcd}
F_s \arrow[d, "\psi_s"] & = & & \boxed{B_s} \arrow[d, "\id"]\arrow[r] & R(1) \\
F_s^{-1} & = & R(-1) \arrow[r] & \boxed{B_s}&
\end{tikzcd}
\]
After Gaussian elimination, $\cone(\psi_s)\simeq R(-1)\xrightarrow{\alpha_s}\boxed{R(1)}$ and so we have the following distinguished triangle
\begin{equation}
\label{spliteq}
    F_s^2\xrightarrow{\id_{F_s}\otimes_R \psi_s} R\to \mathrm{Tot}\paren{F_s(-1)\xrightarrow{\alpha_s} \boxed{F_s(1)}} \xrightarrow{+1}
\end{equation}

\begin{prop}
\label{preprop}
Let $\overbrace{F_sF_t\ldots}^{k }$ be the Rouquier complex where we alternate between $F_s$ and $F_t$ for $k$ terms.
\begin{enumerate}[(i)]
    \item $\pi_s^+(\overbrace{F_s\ldots F_t}^{a }F_s^2\overbrace{F_t\ldots}^{b })\simeq \pi_s^+(\overbrace{F_s\ldots F_s}^{c }F_t^2\overbrace{F_s\ldots}^{d })\simeq 0$ for $|a-b|\ge 2$ and $|c- d|\ge 2$ and $a+b\le 2m-3$
    \item $\pi_s^+(\overbrace{F_sF_t\ldots}^{k })\simeq 0$ for all $2\le k \le 2m-3$.
\end{enumerate}
\end{prop}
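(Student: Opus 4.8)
The plan is to prove \textbf{(i)} and \textbf{(ii)} simultaneously by induction on the total number of elementary Rouquier factors appearing in the complex. The engine for \textbf{(i)} is the splitting triangle \eqref{spliteq} together with its evident $t$-colored analogue (swap $s\leftrightarrow t$): a doubled crossing $F_s^2$ or $F_t^2$ sitting inside a word can be ``resolved'' into strictly shorter complexes. The engine for \textbf{(ii)} is a single braid relation applied to a length-$m$ window, which turns a non-reduced alternating word into a once-defected word of the same length, together with the $\sbim(\hf,\abrac{t})$-bilinearity of $\pi^+_s$ (\cref{lembilinear}), which lets us strip off leading $F_t$-factors. The base cases: for $2\le k\le m$ the word $\overbrace{F_sF_t\cdots}^{k}$ is a reduced expression for $(st)^{k/2}$ or $(st)^{(k-1)/2}s$, hence $\pi^+_s$ kills it by \cref{pis0}(i); and $\pi^+_s(F_s)\simeq 0$ by \cref{pis0}(ii).

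For the inductive step of \textbf{(i)}, tensor \eqref{spliteq} on the left by $L=\overbrace{F_s\ldots F_t}^{a}$ and on the right by $N=\overbrace{F_t\ldots}^{b}$ and apply the triangulated functor $\pi^+_s$, obtaining a distinguished triangle
\[
\pi^+_s(LF_s^2N)\longrightarrow \pi^+_s(LN)\longrightarrow \pi^+_s\!\big(\mathrm{Tot}(LF_s(-1)N\to LF_s(1)N)\big)\xrightarrow{+1}.
\]
Here $LN$ is an $F_t^2$-defected word with parameters $(a-1,b-1)$, strictly shorter and still off-center since $|(a-1)-(b-1)|=|a-b|\ge 2$, while $LF_s(\pm1)N$ is, up to internal shift, the alternating word $\overbrace{F_sF_t\cdots}^{a+1+b}$, also strictly shorter and of length $\le (2m-3)+1=2m-2$. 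By the inductive hypothesis both outer terms are contractible, hence so is $\pi^+_s(LF_s^2N)$; the $F_t^2$-case is word-for-word identical using the $t$-colored triangle. One must check that the color of the doubled crossing matches the required form (it does, by parity of $a$) and dispose of the degenerate cases $a=0$ or $b=0$, in which $LN$ is simply a shorter alternating word handled by \textbf{(ii)}.

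For the inductive step of \textbf{(ii)} with $m<k\le 2m-2$, apply the braid relation to the length-$m$ \emph{suffix} of $\overbrace{F_sF_t\cdots}^{k}$, which is a reduced expression for $w_0$, replacing it by $w_0$'s other reduced expression. This flips the first letter of that window and creates a doubled crossing at its junction with the untouched length-$(k-m)$ prefix; the result is a once-defected word of the same length $k$ with defect parameters $(k-m-1,\,m-1)$. The essential point is the choice of the \emph{suffix} (not the prefix): then $|(k-m-1)-(m-1)|=2m-k$, which is $\ge 2$ precisely when $k\le 2m-2$, and the defect sum $k-2\le 2m-4$ is in range, so this is a legitimate instance of \textbf{(i)} — whose proof, as above, only invokes words of length $<k$. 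Hence $\pi^+_s(\overbrace{F_sF_t\cdots}^{k})\simeq 0$. The value $k=2m-2$, one beyond the range literally asserted in \textbf{(ii)}, occurs only as an auxiliary step here and is covered by the very same argument.

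Organizationally this is one induction on length $\ell$: at stage $\ell$ one first establishes the \textbf{(i)}-statements (which reference only lengths $<\ell$, via the splitting triangle) and then the \textbf{(ii)}-statement at length $\ell$ (which references \textbf{(i)} at length $\ell$ and strictly smaller lengths). I expect the real obstacle to be purely bookkeeping: correctly reading off the two resolved words from each splitting triangle, verifying that the color of the doubled crossing and the off-centeredness $|a-b|\ge 2$ propagate (this is exactly where the suffix braid move, rather than the prefix one, is forced), and clearing the degenerate defect-parameter cases, where a resolved word collapses to a shorter alternating one treated by \cref{pis0} or by \textbf{(ii)} at a lower length.
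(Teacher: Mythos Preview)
Your proposal is correct and follows essentially the same simultaneous induction as the paper: resolve the doubled crossing via the splitting triangle \eqref{spliteq} to reduce (i) to shorter instances of (i) and (ii), and use a braid move on a length-$m$ window to reduce (ii) to (i). The only notable difference is that the paper applies the braid to the \emph{prefix} (yielding a defected word that starts with $F_t$, which then relies on an implicit extension of (i) or on bilinearity), whereas your suffix choice keeps the leading $F_s$ and matches the literal statement of (i); your explicit handling of the auxiliary case $k=2m-2$ is likewise something the paper uses but does not spell out.
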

\begin{proof}
Suppose that $(ii)$ is true up to $k=k_0$. As we are in a triangulated monoidal category, from \cref{spliteq} we obtain the distinguished triangle
\[ \overbrace{F_s\ldots F_t}^{a }F_s^2\overbrace{F_t\ldots}^{b }\to \overbrace{F_s\ldots F_s}^{a -1}F_t^2\overbrace{F_s\ldots}^{b -1}\to  \mathrm{Tot}\paren{\overbrace{F_sF_t\ldots}^{a+b+1 }(-1)\to\overbrace{F_sF_t\ldots}^{a+b+1 }(1) } \]
If $a+b\le k_0-1$, then applying $\pi_s^+$  the last term will be 0 by assumption. Repeating this, we obtain the chain of homotopy equivalences
\[ \pi_s^+\paren{  \overbrace{F_s\ldots F_t}^{a }F_s^2\overbrace{F_t\ldots}^{b }}\simeq \pi_s^+\paren{  \overbrace{F_s\ldots F_s}^{a-1 }F_t^2\overbrace{F_s\ldots}^{b -1}}\simeq \ldots \pi_s^+\paren{\overbrace{F_s\ldots}^{|a-b| }}  \]
As $2\le |a-b|\le k_0$ the last term is homotopy equivalent to 0. Thus,  $(ii)$ up to $k_0\implies (i)$ up to $a+b\le k_0-1$. \\

Now suppose that $(i)$ is true up to $a+b\le k_0-1$. Note $(ii)$ is already true for $k\le m$ by \cref{pis0}. When $k_0+1>m$ we can apply a braid relation to see that when $m$ is odd that
    \[ \overbrace{F_sF_t\ldots F_s}^{k_0+1 }=\overbrace{F_s\ldots F_s}^{m}\overbrace{F_t\ldots }^{k_0+1 -m}\xequals{braid}\overbrace{F_t\ldots F_s}^{m-1}F_t^2\overbrace{F_s\ldots F_t}^{k_0-m}   \]
    If $m$ is even then we have $F_s^2$ instead of $F_t^2$ in the middle. Since $m-1+(k_0-m)=k_0-1$ and $|k_0-m-(m-1)|=|k_0-2m+1|\ge 2$ as $k_0\le 2m-3$, $(i)$  up to $a+b\le k_0-1$ shows the RHS above is homotopy equivalent 0. Thus, $(i)$ up to $a+b\le k_0-1$ implies $(ii)$ up to $k_0+1$.
\end{proof}

\subsection{Semi-Orthogonal Decomposition}

\begin{lemma}
Given $w\in I_2(m)$, let $F_w$ and $F_w^{-1}$ be the Rouquier complexes for the positive/negative braid lifts of a chosen reduced expression for $w$. Then $\set{F_w}_{w\in I_2(m)}$ or $\set{ F_w^{-1}}_{w\in I_2(m)}$ spans $K^b(\sbim(\hf, I_2(m)))$.
\end{lemma}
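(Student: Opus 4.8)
The plan is to prove the slightly stronger statement that $B_w\in\mathcal T$ for every $w\in I_2(m)$, where $\mathcal T:=\abrac{F_x : x\in I_2(m)}$. Since every bounded complex of Soergel bimodules is an iterated cone of its terms and each term is a finite direct sum of grading shifts of the indecomposables $B_y$, the collection $\set{B_w}_{w\in I_2(m)}$ spans $K^b(\sbim(\hf,I_2(m)))$; hence establishing $B_w\in\mathcal T$ for all $w$ forces $\mathcal T=K^b(\sbim(\hf,I_2(m)))$. The assertion for $\set{F_w^{-1}}_{w\in I_2(m)}$ follows by the identical argument, replacing positive braid lifts by negative ones, which are supported in non-positive rather than non-negative cohomological degrees.

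First I would induct on $\ell(w)$. The base case is immediate, since $B_e=R=F_e\in\mathcal T$. For the inductive step, fix a reduced expression $\un{w}=(s_1,\dots,s_k)$ and recall $F_w=F_{s_1}\otimes_R\cdots\otimes_R F_{s_k}$ with each $F_{s_j}=[\boxed{B_{s_j}}\to R(1)]$. Expanding the tensor product, the term of $F_w$ in cohomological degree $i$ is a direct sum of bimodules $\bs(\un{u})(i)$, where $\un{u}$ ranges over the length-$(k-i)$ subwords of $\un{w}$. For $i\ge 1$ such a $\bs(\un{u})$ is supported on elements of length $\le k-i<\ell(w)$, hence decomposes into grading shifts of indecomposables $B_y$ with $\ell(y)<\ell(w)$, and therefore lies in $\mathcal T$ by the inductive hypothesis.

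Now I would apply the stupid truncation triangle
\[ \sigma_{>0}F_w\to F_w\to F_w^0\to (\sigma_{>0}F_w)[1], \]
where $F_w^0=\bs(\un{w})$ sits in cohomological degree $0$ and $\sigma_{>0}F_w=[0\to F_w^1\to\cdots\to F_w^k]$. The complex $\sigma_{>0}F_w$ is an iterated cone of its shifted terms $F_w^i$ ($i\ge 1$), each in $\mathcal T$, so $\sigma_{>0}F_w\in\mathcal T$; since $F_w\in\mathcal T$ by definition, the triangle identifies $F_w^0=\bs(\un{w})$ with a cone of objects of $\mathcal T$, hence $\bs(\un{w})\in\mathcal T$. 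Finally, by Soergel's classification of the indecomposables there is a splitting $\bs(\un{w})\cong B_w\oplus L$ in which $L$ is a direct sum of grading shifts of $B_y$ with $y<w$, so $L\in\mathcal T$ by induction; the split distinguished triangle $L\to B_w\oplus L\to B_w\to L[1]$ (with zero connecting map) exhibits $B_w$ as $\cone(L\to\bs(\un{w}))$, and since $L,\bs(\un{w})\in\mathcal T$ we conclude $B_w\in\mathcal T$, completing the induction.

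This is a bootstrap and I do not expect a genuine obstacle: the only ingredients are the description of the terms of a Rouquier complex as Bott--Samelson bimodules of subwords, the facts that $\bs(\un{w})$ contains $B_w$ with multiplicity one while all its other indecomposable summands are indexed by $y<w$, and routine manipulations with stupid truncations and split triangles. The single point requiring care is that the ``span'' in use is closed under cones and direct sums but \emph{not} under direct summands, which is exactly why the last step is phrased via the split-triangle trick rather than by simply observing that $B_w$ is a summand of an object already known to lie in $\mathcal T$.
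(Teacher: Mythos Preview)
Your argument is correct. Both you and the paper proceed by induction on $\ell(w)$, showing each $B_w$ lies in the span, but the mechanisms differ. The paper invokes the explicit \emph{minimal} complex for $F^\bullet_{(st)^k}$ (equation~\eqref{mineq}), builds an inclusion of complexes $F^\bullet_{(st)^{k-1}s}(1)[-1]\hookrightarrow F^\bullet_{(st)^k}$, and identifies the cone as the two-term complex $[\boxed{B_{(st)^k}}\to B_{(ts)^{k-1}t}(1)]$; one further cone then extracts $B_{(st)^k}$. Your route is more elementary and more general: you work with the \emph{unreduced} Rouquier complex, observe that every positive-degree term is a Bott--Samelson of a proper subword (hence handled by induction), and peel off $\bs(\un{w})$ via the stupid truncation triangle before splitting off $B_w$. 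Your approach requires no dihedral-specific input (no perversity, no ``all KL polynomials are $1$'') and would prove the analogous spanning statement for any Coxeter system verbatim, whereas the paper's argument is tailored to the dihedral minimal complex. The trade-off is that the paper's version stays within the minimal complexes already computed for other purposes in the paper, while yours introduces the full Bott--Samelson expansion. Your care about the span not being idempotent-complete, and the resulting split-triangle manoeuvre, is exactly right.
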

\begin{proof}
The indecomposable bimodules $\set{B_w}_{w\in I_2(m)}$ clearly span $K^b(\sbim(\hf, I_2(m)))$. Taking the cone of the map
\begin{equation}
\label{inceq}
\begin{tikzcd}
 R(1) \arrow[d, ]&= & & R(1) \arrow[d, "\id"] \\
 F_s &= &\boxed{B_s} \arrow[r] & R(1)
\end{tikzcd}
\end{equation}
shows $B_s$ (and $B_t$) are in the span of $\set{F_w}_{w\in I_2(m)}$. Now proceed by induction on the length of $w$. Using \cref{mineq} we have an inclusion map of complexes $F^\bullet_{(st)^{k-1}s}(1)[-1]\to F^\bullet_{(st)^{k}} $
\[\begin{tikzcd}
 F^\bullet_{(st)^{k-1}s}(1) \arrow[d, ]&\simeq & & B_{(st)^{k-1}s}(1) \arrow[d, "\id"] \ar[r]& \ldots \ar[d, "\id"] \\
F^\bullet_{(st)^{k}} &\simeq &\boxed{B_{(st)^{k}}} \arrow[r] & B_{(ts)^{k-1}t}(1)\oplus B_{(st)^{k-1}s}(1) \ar[r]& \ldots 
\end{tikzcd}\]
The cone is isomorphic to the two term complex $\sbrac{\boxed{B_{(st)^{k}}} \to B_{(ts)^{k-1}t}(1)}$. $B_{(ts)^{k-1}t}$ is in the span of $\set{F_w}_{w\in I_2(m)}$ by induction so taking the cone of the inclusion map (similar to \cref{inceq}) shows $B_{(st)^{k}}$ is also in the span.\\

The proof for $\set{ F_w^{-1}}_{w\in I_2(m)}$ spanning proceeds similarly where we use \cref{negmineq} instead.
\end{proof}

\begin{corollary}
\label{semicor}
    Let \, $\Uc_t^{\pm}$ be the full triangulated subcategories spanned by the Rouquier complexes $\set{F_w}_{e,t\neq w\in I_2(m)}$ and $\set{F_w^{-1}}_{e,t\neq w\in I_2(m)}$, respectively. Then we have semi-orthogonal decompositions
    \begin{align*}
       K^b( \sbim(\hf, I_2(m) )) & \simeq \paren{ \Uc_t^{-} \to K^b( \sbim(\hf, \abrac{t} ))  } \\
       & \simeq \paren{  K^b( \sbim(\hf, \abrac{t} ))\to \Uc_t^{+}   }
    \end{align*}
\end{corollary}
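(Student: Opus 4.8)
The plan is to obtain both decompositions directly from \cref{ghmn414} applied to the idempotents $\pi^{\mp}_s$, and then to identify the kernels and images that appear. By the remark after \cref{pidef}, $(\pi^-_s,\eta)$ is a counital idempotent and $(\pi^+_s,\epsilon)$ a unital idempotent in $\Ac=K^b(\mathrm{End}(\sbim(\hf,I_2(m))))$, with $\Mc=K^b(\sbim(\hf,I_2(m)))$ the associated module category; so \cref{ghmn414} gives at once
\[\Mc\simeq\paren{\ker\pi^-_s\to\mathrm{im}\,\pi^-_s}\qquad\text{and}\qquad\Mc\simeq\paren{\mathrm{im}\,\pi^+_s\to\ker\pi^+_s}.\]
Thus the corollary reduces to the identifications $\mathrm{im}\,\pi^{\pm}_s=K^b(\sbim(\hf,\abrac t))$ together with $\ker\pi^-_s=\Uc_t^{-}$ and $\ker\pi^+_s=\Uc_t^{+}$.

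For the images: since $\abrac{\rho_s,\alpha_t^\vee}=0$ we have $\rho_s\in R^t$, so $\rho_s^e$ slides freely through every object of $\sbim(\hf,\abrac t)$ and acts there as $0$ (this is exactly the computation in the proof of \cref{lembilinear}); hence $\pi^{\pm}_s$ restricts to the identity functor on $K^b(\sbim(\hf,\abrac t))$, giving $K^b(\sbim(\hf,\abrac t))\subseteq\mathrm{im}\,\pi^{\pm}_s$. Conversely, \cref{partialprop} shows that $\pi^{\pm}_s$ carries each indecomposable $B_w$ to a shift of $R$ or of $B_t$, so the essential image of $\pi^{\pm}_s$ on $\Mc$ is contained in $K^b(\sbim(\hf,\abrac t))$; reading $\mathrm{im}$ as the essential image, this yields the desired equality.

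For the kernels I would invoke \cref{ghmn417}(3). The previous lemma provides a spanning set $\set{F^{-1}_w}_{w\in I_2(m)}$ of $\Mc$; partition it into $\set{R=F^{-1}_e,\ F^{-1}_t}$ and $\set{F^{-1}_w}_{e,t\neq w}$. On the first pair $\pi^-_s$ acts as the identity: $\pi^-_s(R)\cong R$ by \cref{partialprop}(a), and $F^{-1}_t$ is a complex in $\sbim(\hf,\abrac t)$. Each object in the second collection is killed by $\pi^-_s$: if the chosen reduced word of $w$ begins with $s$ then $w$ is one of $s$, $(st)^k$, $(st)^ks$ (for some $k\ge 1$), and $\pi^-_s(F^{-1}_w)\simeq 0$ by \cref{negpis0}; if it begins with $t$, write $w=tw'$ with $w'$ a reduced word beginning with $s$, factor $F^{-1}_w\cong F^{-1}_t\otimes_R F^{-1}_{w'}$, and use the $\sbim(\hf,\abrac t)$-bilinearity of $\pi^-_s$ (\cref{lembilinear}, applied termwise) to conclude $\pi^-_s(F^{-1}_w)\simeq F^{-1}_t\otimes_R\pi^-_s(F^{-1}_{w'})\simeq 0$. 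Then \cref{ghmn417}(3) tells us $\set{F^{-1}_w}_{e,t\neq w}$ spans $\ker\pi^-_s$ — that is, $\ker\pi^-_s=\Uc_t^{-}$ — and $\set{R,F^{-1}_t}$ spans $\mathrm{im}\,\pi^-_s$, which re-identifies it as $K^b(\sbim(\hf,\abrac t))$ since $R$ and $B_t$ generate $\sbim(\hf,\abrac t)$ and $B_t\in\abrac{R,F^{-1}_t}$. Running the argument word for word with the spanning set $\set{F_w}_{w\in I_2(m)}$, with \cref{pis0} in place of \cref{negpis0} and $\pi^+_s$ in place of $\pi^-_s$, yields $\ker\pi^+_s=\Uc_t^{+}$. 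Substituting these identifications into the two decompositions above proves the corollary.

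I expect the main obstacle to be purely organizational, in the kernel step: the vanishing results \cref{pis0} and \cref{negpis0} are stated only for reduced words that begin with $s$, so one must be careful that every $w\neq e,t$ is accounted for, which is precisely where one peels off a leading tensor factor $F^{\pm 1}_t$ and appeals to bilinearity over $\sbim(\hf,\abrac t)$ to reduce to the already-handled case. The only other subtlety is that $\mathrm{im}\,\pi^{\pm}_s$ must be taken as the essential image (equivalently, the full subcategory of objects on which the canonical map to or from the monoidal unit is an isomorphism), so that it is literally all of $K^b(\sbim(\hf,\abrac t))$ rather than merely a triangulated subcategory generating it.
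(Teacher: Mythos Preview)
Your proof is correct and follows essentially the same approach as the paper: apply \cref{ghmn414} to the idempotents $\pi^{\mp}_s$, then use \cref{ghmn417}(3) together with the spanning sets $\{F_w^{\pm 1}\}$ and the vanishing results \cref{pis0}, \cref{negpis0} to identify the kernels and images. You are in fact slightly more careful than the paper in one spot: the paper simply cites \cref{negpis0} (resp.\ \cref{pis0}) for \emph{all} $w\neq e,t$, whereas those results as stated only treat reduced words beginning with $s$, and your reduction via \cref{lembilinear}---peeling off a leading $F_t^{\pm 1}$---is exactly what is needed to cover the remaining words.
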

\begin{proof}
 $\mathbf{P}=\pi^-_s$ is a counital idempotent and \cref{partialprop} shows that $\pi^-_s(R)=R, \pi^-_s(B_t)=B_t$ and thus $\pi^-_s(F_t)=F_t$. \cref{negpis0} shows that $\pi^-_s(F^\bullet_w)\simeq 0$ for $e,t\neq w\in I_2(m)$. \cref{ghmn417} then implies 
 that
 \begin{equation}
 \label{kerpieq}
     \ker \pi^-_s=\Uc^-_t, \qquad \im \pi^-_s=K^b( \sbim(\hf, \abrac{t} ))
 \end{equation}
 
 The first semi-orthogonal decomposition now follows from \cref{ghmn414}. Similarly, $\mathbf{Q}=\pi^+_s$ is a unital idempotent and \cref{partialprop} shows $\pi^+_s(F_t)=F_t$ while \cref{negpis0} shows that $\pi^+_s(F^\bullet_w)\simeq 0$ for $e,t\neq w\in I_2(m)$ so \cref{ghmn417} implies that
  \begin{equation}
     \ker \pi^+_s=\Uc^+_t, \qquad \im \pi^+_s=K^b( \sbim(\hf, \abrac{t} ))
 \end{equation}
 and the second semi-orthogonal decomposition also follows from \cref{ghmn414}.
\end{proof}

\begin{lemma}
\label{tenequiv}
   Tensoring on the left (or right) with $\ft_{\set{s,t}/t}$ (see \cref{fteq} below) gives an equivalence of categories $\Uc_t^{-}\to \Uc_t^{+}$ with inverse $\ft_{\set{s,t}/t}^{-1}$. 
\end{lemma}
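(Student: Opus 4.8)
The plan is to reduce the statement to an explicit Rouquier‑complex calculation which is then dispatched by \cref{pis0}, \cref{preprop} and \cref{negpis0}. Since $\ft_{\set{s,t}/t}$ is a composite of Rouquier complexes and their inverses, it is an invertible object of $K^b(\sbim(\hf, I_2(m)))$, so $T := \ft_{\set{s,t}/t}\otimes_R(-)$ is an autoequivalence with inverse $T^{-1} := \ft_{\set{s,t}/t}^{-1}\otimes_R(-)$. It therefore suffices to prove the two inclusions $T(\Uc_t^-)\subseteq\Uc_t^+$ and $T^{-1}(\Uc_t^+)\subseteq\Uc_t^-$: given these, for $Y\in\Uc_t^+$ one has $T^{-1}(Y)\in\Uc_t^-$ with $T(T^{-1}(Y))=Y$, so $T(\Uc_t^-)=\Uc_t^+$, and then $T|_{\Uc_t^-}$ and $T^{-1}|_{\Uc_t^+}$ are mutually inverse equivalences.

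For $T(\Uc_t^-)\subseteq\Uc_t^+$: by \cref{semicor} we have $\Uc_t^-=\ker\pi_s^-$ and $\Uc_t^+=\ker\pi_s^+$, and $\Uc_t^-$ is generated as a triangulated category by the negative Rouquier complexes $F_w^{-1}$ with $e,t\neq w\in I_2(m)$. As $T$ is triangulated it is enough to show $\ft_{\set{s,t}/t}\otimes_R F_w^{-1}\in\ker\pi_s^+$ for each such $w$. Writing $\ft_{\set{s,t}/t}\cong\ft_t^{-1}\otimes_R\ft_{\set{s,t}}$ and noting that $\ft_t^{-1}=F_t^{-2}$ lies in $K^b(\sbim(\hf,\abrac t))$, the $\sbim(\hf,\abrac t)$-bilinearity of $\pi_s^+$ (\cref{lembilinear}, applied termwise to complexes) gives
\[\pi_s^+\!\paren{\ft_{\set{s,t}/t}\otimes_R F_w^{-1}}\;\cong\;\ft_t^{-1}\otimes_R\pi_s^+\!\paren{\ft_{\set{s,t}}\otimes_R F_w^{-1}},\]
so it suffices that $\pi_s^+(\ft_{\set{s,t}}\otimes_R F_w^{-1})\simeq 0$. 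Here $\ft_{\set{s,t}}=F_{w_0}^2$; choosing a reduced word for $w_0$ that ends in a reduced word for $w$ yields $F_{w_0}\otimes_R F_w^{-1}\simeq F_{w_0w^{-1}}$, hence $\ft_{\set{s,t}}\otimes_R F_w^{-1}\simeq F_{w_0}\otimes_R F_{w_0w^{-1}}$, a positive complex. Using the braid relation $\overbrace{F_sF_t\cdots}^{m}\simeq\overbrace{F_tF_s\cdots}^{m}$ together with the uniqueness of reduced words of length $<m$ in $I_2(m)$, rewrite $F_{w_0}\otimes_R F_{w_0w^{-1}}$ as an alternating product of length $2m-\ell(w)$ and then split off a (possibly trivial) factor $F_t\in K^b(\sbim(\hf,\abrac t))$ from each end, presenting it as $F_t^{\epsilon_1}\otimes_R M\otimes_R F_t^{\epsilon_2}$ with $\epsilon_i\in\set{0,1}$ and $M$ an alternating product starting and ending in $F_s$, of odd length $j$ with $1\le j\le 2m-3$. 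By \cref{pis0} ($j=1$) and \cref{preprop} ($2\le j\le 2m-3$) we get $\pi_s^+(M)\simeq 0$, hence $\pi_s^+(F_t^{\epsilon_1}\otimes_R M\otimes_R F_t^{\epsilon_2})\cong F_t^{\epsilon_1}\otimes_R\pi_s^+(M)\otimes_R F_t^{\epsilon_2}\simeq 0$.

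The inclusion $T^{-1}(\Uc_t^+)\subseteq\Uc_t^-$ is entirely symmetric: $\Uc_t^+=\ker\pi_s^+$ is generated by the positive $F_w$ with $e,t\neq w$; using $\ft_{\set{s,t}/t}^{-1}\cong\ft_t\otimes_R\ft_{\set{s,t}}^{-1}$ and $\sbim(\hf,\abrac t)$-bilinearity of $\pi_s^-$ one reduces to $\pi_s^-(\ft_{\set{s,t}}^{-1}\otimes_R F_w)\simeq 0$, and $\ft_{\set{s,t}}^{-1}\otimes_R F_w\simeq(F_{w^{-1}w_0}\otimes_R F_{w_0})^{-1}$ is rewritten just as above into $F_t^{\epsilon_1}\otimes_R M^{-1}\otimes_R F_t^{\epsilon_2}$ with $\pi_s^-(M^{-1})\simeq 0$ by \cref{negpis0}.

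The step I expect to be the main obstacle is the rewriting of $F_{w_0}\otimes_R F_{w_0w^{-1}}$: as a positive complex its length can be as large as $2m-1$, a priori outside the range $\le 2m-3$ in which \cref{preprop} gives the vanishing of $\pi_s^+$. The reason the argument closes is that, after applying one braid relation, the two outermost colors of $F_{w_0}\otimes_R F_{w_0w^{-1}}$ can always be arranged to be $t$, so that two factors of $F_t$ — which $\pi_s^+$ absorbs by \cref{lembilinear} — split off and bring the alternating middle into the proven range, with the length‑$1$ leftover handled separately by \cref{pis0}. Verifying this "two $t$-strands at the ends" claim for all $w\neq e,t$ and both parities of $m$ is the one genuinely case‑dependent point in the proof.
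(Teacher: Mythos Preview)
Your proof is correct and follows the paper's high-level strategy: verify on generators that $\pi_s^+(\ft_{\{s,t\}/t}\otimes_R F_w^{-1})\simeq 0$ for each $e,t\neq w$, then invoke symmetry for the other inclusion. The actual computation, however, is organized differently. The paper multiplies directly using the form $\ft_{\{s,t\}/t}\simeq\overbrace{F_s\cdots F_s}^{m-2}F_t^2\overbrace{F_s\cdots F_s}^{m-2}$ from \cref{fteq}, producing products with a square in the middle and invoking part (i) of \cref{preprop}; it then breaks into the sub-cases $\ell(w)\le m-2$, $\ell(w)=m-1$, $\ell(w)=m$, and treats words beginning with $t$ separately via the braid-group identity $\ft_{\{s,t\}/t}\cdot t^{-1}=t^{-1}\cdot\ft_{\{s,t\}/t}$. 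Your route---pull out $\ft_t^{-1}=F_t^{-2}$ via \cref{lembilinear}, reduce to the purely positive alternating product $F_{w_0}F_{w_0w^{-1}}$, strip outer $F_t$'s, and apply part (ii) of \cref{preprop}---is a bit more uniform and avoids the separate ``$w$ starts with $t$'' case. The point you flag as the main obstacle is in fact clean: the first and last letters of the alternating word $F_{w_0}F_{w_0w^{-1}}$ are $\bar b$ and $\bar a$ respectively (where $w=a\cdots b$), and checking the four possibilities for $(a,b)$ with $w\neq e,t$ shows that after removing the outer $F_t$'s one always lands at odd length $j\le 2m-3$.

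One small remark: for the symmetric inclusion, citing \cref{negpis0} alone only covers negative alternating products of length $\le m$, whereas your $M^{-1}$ can have length up to $2m-3$; you implicitly need the $\pi_s^-$-analogue of \cref{preprop}. This is routine (same proof with the link-splitting triangle reversed), and the paper's treatment of that direction (``Similarly, \ldots'') is no more detailed.
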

\begin{proof}
    We first show that tensoring on the left with $\ft_{\set{s,t}/t}$ sends $\Uc_t^{-}$ to $\Uc_t^{+}$ by showing it on the generators of $\Uc_t^{-}$. When $m$ is odd, we have
    \[ \ft_{\set{s,t}/t} \overbrace{F_s^{-1}F_t^{-1}\ldots }^{k}= \overbrace{F_s\ldots F_s}^{m-2}F_t^2\overbrace{F_s\ldots F_s}^{m-2-k}\implies  \text{Applying }\pi_s^+ \simeq 0  \qquad k\le m-2   \]
    \[ \ft_{\set{s,t}/t} \overbrace{F_s^{-1}F_t^{-1}\ldots }^{m-1}= \overbrace{F_s\ldots F_t}^{m-1}\implies  \text{Applying }\pi_s^+ \simeq 0\]
    \[ \ft_{\set{s,t}/t} \overbrace{F_s^{-1}F_t^{-1}\ldots }^{m}= \overbrace{F_s\ldots F_t}^{m-1}F_s^{-1}\xequals{braid} F_t^{-1} \overbrace{F_s\ldots F_s}^{m-2} F_t  \implies  \text{Applying }\pi_s^+ \simeq 0 \]
   where we have used \cref{preprop} in the last steps above. One can check at the level of the braid group that $\ft_{\set{s,t}/t} t^{-1}=t^{-1} \ft_{\set{s,t}/t}$ and thus if our generator starts with a $F_t^{-1}$ on the left
\[ \pi_s^+\paren{\ft_{\set{s,t}/t} \overbrace{F_t^{-1}F_s^{-1}\ldots }^{k}}= \pi_s^+\paren{F_t^{-1} \ft_{\set{s,t}/t} \overbrace{F_s^{-1}\ldots }^{k-1}}= F_t^{-1}\pi_s^+\paren{ \ft_{\set{s,t}/t} \overbrace{F_s^{-1}\ldots }^{k-1}}\simeq 0 \]
   where we have used bilinearity of $\pi_s^+$ and the previous case. Similar calculations hold when $m$ is even. Thus by \cref{kerpieq} the image of $\Uc_t^{-}$ lands in $\Uc_t^{+}$ as desired.\\

   Similarly, tensoring on the left with $\ft_{\set{s,t}/t}^{-1}$ sends $\Uc_t^{+}$ to $\Uc_t^{-}$ and it is clear that these functors are inverse equivalences.
\end{proof}

\subsection{Relative Serre Duality}

\begin{theorem}
\label{relsertheorem}
    $\pi_s^-(X^\bullet)\simeq \pi_s^+( \ft_{\set{s,t}/t}\otimes_R X^\bullet )\simeq \pi_s^+(X^\bullet\otimes_R \ft_{\set{s,t}/t} )$ for all $X^\bullet\in K^b(\sbim(\hf, I_2(m)))$.
\end{theorem}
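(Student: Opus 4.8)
The plan is to follow the strategy of \cite{GHMN19}: use the two semi-orthogonal decompositions of \cref{semicor} together with the equivalence of \cref{tenequiv} to reduce \cref{relsertheorem} for an arbitrary $X^\bullet$ to the single instance $X^\bullet=R$, and then to establish that instance --- namely $\pi_s^+(\ft_{\set{s,t}/t})\simeq R$ --- by a Gaussian elimination computation built on the link-splitting triangle \cref{spliteq} and the vanishing results of \cref{sect3}. Throughout, let $\iota\colon\sbim(\hf,\abrac{t})\hookrightarrow\sbim(\hf,I_2(m))$ denote the inclusion.

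First I would carry out the semi-orthogonal reduction. Fixing $X^\bullet$, the first decomposition of \cref{semicor} supplies a distinguished triangle $\iota\pi_s^-(X^\bullet)\to X^\bullet\to C^\bullet\xrightarrow{+1}$ with $C^\bullet\in\Uc_t^-$. Tensoring this triangle with $\ft_{\set{s,t}/t}$ (on the left, or on the right) and applying the triangulated functor $\pi_s^+$, the outer term dies: $\ft_{\set{s,t}/t}\otimes_R C^\bullet\in\Uc_t^+$ by \cref{tenequiv}, and $\Uc_t^+=\ker\pi_s^+$ by \cref{semicor}. Hence $\pi_s^+(\ft_{\set{s,t}/t}\otimes_R X^\bullet)\simeq\pi_s^+(\ft_{\set{s,t}/t}\otimes_R\iota\pi_s^-(X^\bullet))$, and similarly with the tensor on the right; since $\pi_s^-(X^\bullet)\in K^b(\sbim(\hf,\abrac{t}))$ and $\pi_s^-(R)\simeq R$ by \cref{partialprop}, it is enough to prove $\pi_s^+(\ft_{\set{s,t}/t}\otimes_R\iota(M^\bullet))\simeq M^\bullet\simeq\pi_s^+(\iota(M^\bullet)\otimes_R\ft_{\set{s,t}/t})$ for all $M^\bullet\in K^b(\sbim(\hf,\abrac{t}))$. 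Now write $\ft_{\set{s,t}/t}=\ft_{\set{s,t}}\otimes_R\ft_t^{-1}$; using that $\ft_{\set{s,t}}$ is the full twist on $I_2(m)$, hence central, and that $\ft_t^{\pm1}$ lies in $K^b(\sbim(\hf,\abrac{t}))$ (so it can be absorbed into $\iota$), together with the $\sbim(\hf,\abrac{t})$-bilinearity of $\pi_s^+$ from \cref{lembilinear} (which passes to the homotopy category), both sides collapse to $M^\bullet\otimes_R\pi_s^+(\ft_{\set{s,t}/t})$ --- here one also uses $\ft_{\set{s,t}}=\ft_{\set{s,t}/t}\otimes_R\ft_t$, so $\pi_s^+(\ft_{\set{s,t}})\simeq\pi_s^+(\ft_{\set{s,t}/t})\otimes_R\ft_t$, and the centrality of $\ft_t$ in $K^b(\sbim(\hf,\abrac{t}))$. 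Thus the entire theorem reduces to the single homotopy equivalence $\pi_s^+(\ft_{\set{s,t}/t})\simeq R$, which is exactly \cref{relsertheorem} for $X^\bullet=R$.

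The remaining step --- and the main obstacle --- is this base case. I would realize $\ft_{\set{s,t}/t}=\ft_{\set{s,t}}\otimes_R\ft_t^{-1}$ as a Rouquier complex and peel off its $\ft_s=F_s^2$-factors one at a time via \cref{spliteq}, exactly as in the proof of \cref{preprop}: each move rewrites $\ft_s$ as $R$ plus a ``$\mathrm{Tot}$''-remainder built from shorter Rouquier complexes, and after applying $\pi_s^+$ the remainders vanish because --- after further splitting and braid moves --- they reduce to alternating Rouquier complexes of length $\le 2m-3$, killed by \cref{pis0} and \cref{preprop}. What survives is $R$. For $m=3$ this is already transparent: one has $\ft_{\set{s,t}/t}\simeq\ft_sF_t\ft_sF_t^{-1}$, and splitting the two copies of $\ft_s$ while using $\pi_s^+(F_{st})\simeq\pi_s^+(F_{sts})\simeq 0$ and $\pi_s^+(F_s)\simeq 0$ (all from \cref{pis0}) leaves $\pi_s^+(\ft_{\set{s,t}/t})\simeq F_t\otimes_R F_t^{-1}\simeq R$.

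The hard part will be organizing this last computation uniformly in $m$: unlike every reduction preceding it, $\pi_s^+$ does \emph{not} annihilate $\ft_{\set{s,t}/t}$ --- the complex sits just outside the range of \cref{pis0} and \cref{preprop} --- so there is a genuine non-vanishing contribution that must be carried through the Gaussian elimination, and one must verify that each remainder produced by the splitting scheme really is an alternating word short enough for \cref{preprop} to apply. An alternative route to the base case is to compute the minimal complex of $\ft_{\set{s,t}/t}$ directly (its class in the Grothendieck group is forced by \cref{deltabchange}-type considerations and multiplicativity, though, since $\ft_{\set{s,t}/t}$ is not perverse, pinning down the differentials still requires the $\hom$-dimension and vanishing inputs of \cref{sect3}) and then apply $\pi_s^+$ termwise via \cref{partialprop} and eliminate.
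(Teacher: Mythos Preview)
Your strategy matches the paper's: the same semi-orthogonal reduction via \cref{semicor} and \cref{tenequiv}, the same base-case target $\pi_s^+(\ft_{\set{s,t}/t})\simeq R$, and the same splitting-triangle/\cref{preprop} attack on it. Two small points. First, your detour through centrality of $\ft_{\set{s,t}}$ in the reduction is unnecessary: bilinearity of $\pi_s^+$ (\cref{lembilinear}) already gives $\pi_s^+(\ft_{\set{s,t}/t}\otimes_R\iota M^\bullet)\simeq\pi_s^+(\ft_{\set{s,t}/t})\otimes_R M^\bullet$ directly, which is exactly how the paper finishes (see \cref{relsereq}). Second, the ``hard part'' you anticipate for the base case dissolves once you have the right braid identity: rather than peeling outer $\ft_s$ factors, the paper rewrites $\ft_{\set{s,t}/t}$ in the palindromic form
\[
\ft_{\set{s,t}/t}\;\simeq\;\overbrace{F_s\cdots F_s}^{m-2}\,F_?^{\,2}\,\overbrace{F_s\cdots F_s}^{m-2}
\]
(with $?=t$ for $m$ odd and $?=s$ for $m$ even) and then applies \cref{spliteq} to the \emph{middle} square. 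The remainder is precisely the alternating word of length $2m-3$, which \cref{preprop}(ii) kills, so one step drops both halves by one; iterating lands on $\pi_s^+(R)=R$. The fact that the palindrome begins and ends in $F_s$ is what makes the final step work, since $\pi_s^+(F_s)\simeq 0$ but $\pi_s^+(F_t)\not\simeq 0$.
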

\begin{proof}
It suffices to prove the first equivalence as the second proceeds similarly. When $m$ is odd, we have that 
   \begin{equation}
   \label{fteq}
       \ft_{\set{s,t}/t}=F^\bullet_{(st)^m}F_t^{-2}\simeq\overbrace{F_s\ldots F_t}^{m-1}\overbrace{F_s\ldots F_s}^{m} F_t^{-1}\xequals{braid}  \overbrace{F_s\ldots F_s}^{m-2}F_t^2\overbrace{F_s\ldots F_s}^{m-2}  
   \end{equation} 
   As in the proof of \cref{preprop} using \cref{spliteq} we obtain the distinguished triangle
   \[ \overbrace{F_s\ldots F_s}^{m-2 }F_t^2\overbrace{F_s\ldots F_s}^{m-2 }\to \overbrace{F_s\ldots F_t}^{m -3}F_s^2\overbrace{F_t\ldots F_s}^{m -3}\to  \mathrm{Tot}\paren{\overbrace{F_sF_t\ldots}^{2m-3}(-1)\to\overbrace{F_sF_t\ldots}^{2m-3 }(1) } \xrightarrow{+1}\]
   Using \cref{preprop} $(ii)$ the last term is homotopy equivalent to 0 after applying $\pi_s^+$, so we obtain the following chain of homotopy equivalences
   \begin{equation}
   \label{pifteq}
    \pi_s^+\paren{  \ft_{\set{s,t}/t}}=  \pi_s^+\paren{  \overbrace{F_s\ldots F_s}^{m-2 }F_t^2\overbrace{F_s\ldots F_s}^{m-2 }}\simeq \pi_s^+\paren{  \overbrace{F_s\ldots F_t}^{m-3 }F_s^2\overbrace{F_t\ldots F_s}^{m -3}}\simeq \ldots \simeq \pi_s^+\paren{R}=R  
   \end{equation} 
   Note that it is crucial that the start and ends of the expressions above end in $s$ as  $\pi_s^+(F_s)\simeq 0$ by \cref{pis0}$(ii)$ but $\pi_s^+(F_t)\not\simeq 0$! When $m$ is even, \cref{fteq} has $F_s^2$ in the middle (for the case $m=2$, $\ft_{\set{s,t}/t}=F_s^2$) but the start and end still consists of $F_s$ so the same arguments above also show \cref{pifteq} in this case. \\

   Using \cref{semicor}, $X^\bullet$ fits into the distinguished triangle
   \[  \pi_s^-(X^\bullet)\xrightarrow{\eta} X^\bullet \to C^-(X^\bullet)\xrightarrow{+1}  \]
   where $C^-(X^\bullet)=\cone(\eta)\in \Uc^-_t$. Tensoring with $ \ft_{\set{s,t}/t}$ on the left gives us the distinguished triangle
   \[  \ft_{\set{s,t}/t}\otimes_R \pi_s^-(X^\bullet)\to  \ft_{\set{s,t}/t}\otimes_R X^\bullet \to  \ft_{\set{s,t}/t}\otimes_R C^-(X^\bullet)\xrightarrow{+1}  \]
  $\ft_{\set{s,t}/t}\otimes_R C^-(X^\bullet)\in \Uc_t^+$ by \cref{tenequiv} so the last term is homotopy equivalent to 0 after applying the triangulated functor $\pi_s^+$. Thus by bilinearity and \cref{pifteq} we see that
  \begin{equation}
  \label{relsereq}
  \pi_s^-(X^\bullet)\simeq \pi_s^+\paren{\ft_{\set{s,t}/t}}\otimes_R \pi_s^-(X^\bullet)= \pi_s^+\paren{\ft_{\set{s,t}/t}\otimes_R \pi_s^-(X^\bullet)}=   \pi_s^+\paren{\ft_{\set{s,t}/t}\otimes_R X^\bullet} 
  \end{equation}
\end{proof}

\subsection{Dualities}

\begin{lemma}[{\cite[Exercise 18.13]{EMTW20}, \cite[Proposition 5.9]{Soe07}}]
\label{soelem}
    Let $\hf$ be a realization of a Coxeter group $W$ and let $B, B^\prime\in \sbim(\hf, W)$. Then 
    \[ \Db(B\otimes_R B^\prime) =\Db(B)\otimes_R \Db(B^\prime)  \]
\end{lemma}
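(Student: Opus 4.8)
The plan is to reduce the identity to Bott--Samelson bimodules, then to the single bimodule $B_s$, and to settle the resulting base case using the Frobenius extension $R\supset R^s$. First, both assignments $(B,B')\mapsto\Db(B\otimes_R B')$ and $(B,B')\mapsto\Db(B)\otimes_R\Db(B')$ are additive in each variable: the functor $\Db=\hom_{\_ R}(-,R)$ of \cref{defd} carries finite direct sums to finite direct sums, and $-\otimes_R-$ preserves them. Since every object of $\sbim(\hf,W)$ is a direct summand of a finite direct sum of grading shifts of Bott--Samelson bimodules, it suffices to produce an isomorphism of $R$-bimodules $\Db(\bs(\un{v})\otimes_R\bs(\un{w}))\cong\Db(\bs(\un{v}))\otimes_R\Db(\bs(\un{w}))$ which is \emph{natural} in $\bs(\un{v})$ and $\bs(\un{w})$; naturality then lets one restrict along the Karoubi idempotents cutting out the indecomposables, yielding the statement for all $B,B'$. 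Writing $\bs(\un{w})=\bs(\un{u})\otimes_R B_s$ with $\un{u}$ of smaller length and inducting on the length of $\un{w}$ --- with trivial base case $\bs(\un{w})=R$, where $\Db(R)=R$ --- the whole statement reduces to the one-step claim
\[\Db(M\otimes_R B_s)\;\cong\;\Db(M)\otimes_R B_s\qquad\text{naturally in }M\in\sbim(\hf,W),\]
whose special case $M=R$ incidentally recovers the self-duality $\Db(B_s)\cong B_s$.

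For the one-step claim I would use that $R\supset R^s$ is a graded Frobenius extension, with $R$ free of rank $2$ as an $R^s$-module on either side and Frobenius trace the Demazure operator $\partial_s\colon R\to R^s$. Writing $B_s\cong R\otimes_{R^s}R(1)$ gives $M\otimes_R B_s\cong(M\otimes_{R^s}R)(1)$, and the (shifted) self-biadjunction of the induction--restriction pair for $R/R^s$ is encoded in a bimodule isomorphism $\hom_{\_ {R^s}}(R,R^s)\cong R$ up to a grading shift. Combining this with the hom--tensor adjunction produces a chain of isomorphisms of the form (grading shifts suppressed, $\hom_{\_ {R^s}}$ denoting right-$R^s$-linear maps)
\[\Db(M\otimes_R B_s)\;\cong\;\hom_{\_ R}(M\otimes_{R^s}R,\,R)\;\cong\;\hom_{\_ {R^s}}(M,\,R)\;\cong\;\Db(M)\otimes_{R^s}R\;\cong\;\Db(M)\otimes_R B_s,\]
each arrow natural in $M$; it then remains only to reinstate the grading shifts, the case $M=R$ fixing the normalization.

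The main obstacle is that the hom--tensor adjunction and the Frobenius reciprocity isomorphism used in the chain are, a priori, isomorphisms of one-sided $R$-modules only --- precisely the phenomenon flagged in the Warning after \cref{li72}, where $\hom_{R^e}(B_t,\bs(\un{w}))\cong\hom_{R^e}(R,B_t\otimes_R\bs(\un{w}))$ holds merely as right $R$-modules. Promoting each arrow in the displayed chain to an isomorphism of $R$-bimodules, which is exactly what the lemma asserts, requires that $R/R^s$ be a \emph{symmetric} Frobenius extension --- i.e.\ that its left and right Frobenius forms agree, both being $\partial_s$ --- and carrying out this bilinearity check carefully is the one genuinely delicate point, with the coherent bookkeeping of grading shifts across the induction a minor additional nuisance. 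Alternatively one can avoid the induction entirely by writing down an explicit candidate natural transformation $\Db(B)\otimes_R\Db(B')\to\Db(B\otimes_R B')$ out of the biduality counit and coevaluation of $B$ and $B'$, available since Soergel bimodules are finitely generated free as left and as right $R$-modules, and then checking that after the forgetful functor to graded right $R$-modules it becomes the classical isomorphism for finitely generated projective modules; the substantive content --- bimodule-linearity of that map --- is the same either way.
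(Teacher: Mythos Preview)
The paper does not supply its own proof of this lemma: it is stated with citations to \cite[Exercise 18.13]{EMTW20} and \cite[Proposition 5.9]{Soe07} and used as a black box. Your sketch is essentially the argument of Soergel in the second of those references --- reduce by additivity and Karoubi closure to Bott--Samelsons, peel off one $B_s$ at a time, and handle the single step via the Frobenius extension $R^s\subset R$ --- and you have correctly isolated the one genuine subtlety, namely that the hom--tensor and Frobenius reciprocity isomorphisms must be checked to be $R$-\emph{bilinear} rather than merely one-sided. Your alternative route via an explicit natural transformation built from biduality and checked against the classical isomorphism for finitely generated projectives is also viable and is closer in spirit to how \cite{EMTW20} packages things. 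Either way there is nothing to compare against in the present paper; your proposal is a correct outline of the standard proof.
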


Let us now introduce two other ``duality" functors on $\sbim(\hf, W)$ and their interactions with $\Db$.

\begin{definition}
\begin{enumerate}[(1)]
    \item $\omega : \sbim(\hf, W) \to \sbim(\hf, W)$
is the \un{covariant} graded, anti-monoidal functor which sends $B_s \mapsto B_s$, fixes morphisms, and satisfies $\omega(MN) = \omega(N)\omega(M)$ and $\omega(M(1)) = \omega(M)(1)$.
\item $(-)^\vee : \sbim(\hf, W) \to \sbim(\hf, W)^{op}$ is the \un{contravariant} anti-graded, anti-monoidal functor which sends $B_s \mapsto B_s$, flips morphisms diagrammatically, and satisfies 
$(MN)^\vee = N^\vee M^\vee $ and $M(1)^\vee = M^\vee(-1).$
\end{enumerate}
\end{definition}

Because $\Db(B_s)\cong B_s$ \cite[Exercise 18.12]{EMTW20}, it follows from \cref{soelem} that

\begin{equation}
\label{wdveq}
    \omega (\Db(-))=(-)^\vee =\Db(\omega(-))
\end{equation}

The monoidal or anti-monoidality of these 3 functors extend to $K^b(\sbim(\hf, W))$ as well as \cref{wdveq}. They are also all fully faithful and because $\hh^k(MN)\xcong{v.s.} \hh^k(NM)$ it follows that $\hh^k\circ \omega\xcong{v.s.}\hh^k$ and thus 
\[ \hh^k(\Db(X^\bullet))\xcong{v.s.} \hh^k\paren{ \paren{X^\bullet}^\vee} \]
but not as $R-$modules! Because $B_s$ is a Frobenius algebra object in $\sbim(\hf, W)$ (see \cite[Section 8.2]{EMTW20}), the functor $B_s\otimes_R(-)$ is self adjoint. Likewise with $(-)\otimes_R B_s$ from which it follows that for all $B\in \sbim(\hf, W)$
\[ \hom_{R^e}(B \otimes_R M, N)\cong \hom_{R^e}( M, B^\vee \otimes_R N) \qquad \text{and}\qquad \hom_{R^e}( M\otimes_R B, N)\cong \hom_{R^e}( M, N\otimes_R B^\vee ) \]

As a result, for any $X^\bullet\in K^b(\sbim(\hf, W))$ 
\[ \uhom_{R^e}(X^\bullet, Y^\bullet)\xcong{\text{right }R} \uhom_{R^e}(R, (X^\bullet)^\vee \otimes_R Y^\bullet)\xcong{\text{right }R} \uhom_{R^e}(  (Y^\bullet)^\vee \otimes_R X^\bullet, R ) \]

\subsubsection{Final Computation}

\begin{lemma}
    Given $B\in \sbim(\hf, I_2(m))$, $\hh^0(B)=\pi^-_t \circ \pi^-_s (B)$ and  $\hh^0(B)=\pi^+_t \circ \pi^+_s (B)$.
\end{lemma}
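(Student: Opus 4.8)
The plan is to match each side of the asserted identities with the standard descriptions $\hh^0(-)=\hom_{R^e}(R,-)$ and $\hh_0(-)=R\otimes_{R^e}(-)$, the only real input being that $W$-invariant linear forms act centrally on every Soergel bimodule.

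First I would record a linear‑algebra fact coming from \cref{asump1}. Since fundamental weights exist, $\{\alpha_s^\vee,\alpha_t^\vee\}$ is linearly independent, so the annihilator $(\kb\alpha_s^\vee\oplus\kb\alpha_t^\vee)^\perp\subseteq V^*$ has dimension $\dim V-2$, and it is exactly the fixed space $(V^*)^W$ (a form $\xi$ is $W$‑fixed iff $\xi(\alpha_s^\vee)=\xi(\alpha_t^\vee)=0$). On the other hand $\rho_s,\rho_t$ are linearly independent — pair them against $\alpha_s^\vee,\alpha_t^\vee$ — and $\kb\rho_s\oplus\kb\rho_t$ meets $(V^*)^W$ in $0$ for the same reason. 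Counting dimensions gives $V^*=\kb\rho_s\oplus\kb\rho_t\oplus(V^*)^W$, so $R=\mathrm{Sym}(V^*)$ is generated as a $\kb$‑algebra by $\rho_s$, $\rho_t$ together with $(V^*)^W$. Next I would note that any $z\in(V^*)^W$ lies in $R^s\cap R^t$, hence in every Bott–Samelson bimodule $\bs(\un w)$ it slides freely across each tensor factor; therefore $z$ is central in $\bs(\un w)$, and so central in every $B\in\sbim(\hf,I_2(m))$ by passing to direct summands. Finally, $\rho_s^e$ and $\rho_t^e$ commute as operators on any bimodule (both are "commutator with" maps and $R$ is commutative), so $\pi^-_s$ and $\pi^-_t$ commute, as do $\pi^+_s$ and $\pi^+_t$.

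For the first equality, $\hh^0(B)=\ext^0_{R^e}(R,B)=\hom_{R^e}(R,B)$, and a bimodule map $R\to B$ is the same as an element $b\in B$ with $rb=br$ for all $r\in R$. The centraliser $\{r\in R:rb=br\}$ of a fixed such $b$ is a $\kb$‑subalgebra of $R$ containing $(V^*)^W$ by the second paragraph, hence all of $R$ as soon as it contains $\rho_s$ and $\rho_t$. Thus $\hh^0(B)=\{b\in B:\rho_s^e b=\rho_t^e b=0\}$, and since $\pi^-_s(B)=\ker\rho_s^e$ is a sub‑bimodule of $B$ on which $\rho_t^e$ restricts to $\rho_t^e$, this is $\ker\big(\rho_t^e|_{\pi^-_s(B)}\big)=\pi^-_t\circ\pi^-_s(B)$. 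The cokernel version runs identically with kernels replaced by cokernels and $\hom_{R^e}(R,-)$ replaced by $R\otimes_{R^e}(-)$: here $R\otimes_{R^e}B=B/\mathfrak j B$ with $\mathfrak j=\ker(R^e\to R)$ generated by $\{x\otimes 1-1\otimes x:x\in V^*\}$, which by the decomposition is generated by $\rho_s^e$, $\rho_t^e$ and the elements $z\otimes 1-1\otimes z$ $(z\in(V^*)^W)$; the latter annihilate $B$, so $R\otimes_{R^e}B=B/(\rho_s^e B+\rho_t^e B)=\pi^+_t\circ\pi^+_s(B)$, i.e. $\hh_0(B)=\pi^+_t\circ\pi^+_s(B)$ (note that $\pi^+_t\pi^+_s(B)=(B/\rho_s^e B)/\rho_t^e(B/\rho_s^e B)$ is visibly this quotient).

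I do not expect a genuine obstacle: the entire content is the two elementary observations of the second paragraph — that $\rho_s,\rho_t$ and the $W$‑invariant forms generate $R$, and that those invariant forms are central on Soergel bimodules — after which the statement is formal manipulation of kernels and cokernels. The one thing worth flagging is the grading bookkeeping: with the standard normalisation of $B_s$ the argument pairs $\hh^0(B)\simeq\pi^-_t\pi^-_s(B)$ with $\hh_0(B)\simeq\pi^+_t\pi^+_s(B)$, and these two are interchanged by the $\Db$‑duality between $\pi^-_s$ and $\pi^+_s$ already recorded in \cref{li72}.
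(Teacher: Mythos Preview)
Your proof is correct and follows essentially the same approach as the paper: both arguments decompose $V^*=\kb\rho_s\oplus\kb\rho_t\oplus(V^*)^W$, observe that $W$-invariant linear forms act centrally on Soergel bimodules (so the corresponding $\pi^{\pm}$ are the identity), and then invoke the center/co-center description of $\hh^0$ and $\hh_0$. Your version is simply more explicit about the linear algebra and the centrality argument, and you are right that the second equality should read $\hh_0(B)=\pi^+_t\circ\pi^+_s(B)$.
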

\begin{proof}
    As vector spaces we have that $V^*=\kb \rho_s \oplus \kb \rho_t \oplus (V^*)^{s,t}$. Let $\set{e_1^*, \ldots, e_r^*}$ be a basis for $(V^*)^{s,t}$. Then using the center and co-center interpretation of $\hh^0$ and $\hh_0$, respectively, we see that 
    \[  \hh^0(B)= \pi^-_t \circ \pi^-_s \circ \pi^-_{e_1^*}\ldots \circ \pi^-_{e_r^*}, \qquad \hh_0(B)= \pi^+_t \circ \pi^+_s \circ \pi^+_{e_1^*}\ldots \circ \pi^+_{e_r^*}  \]
    However, $\pi^-_{e_i^*}(B)\cong  \pi^+_{e_i^*}(B)\cong B$ for all $B\in\sbim(\hf, I_2(m)) $ and $1\le i \le r$ and thus the lemma follows. 
\end{proof}

\begin{lemma}
  $\pi^-_t, \pi^{+}_t $ are the right and left adjoints to the inclusion functor $\abrac{\boxed{R}}\to K^b(\sbim(\hf, \abrac{t}))$ and 
  \begin{equation}
  \label{rank1releq}
       \pi_t^-(Z^\bullet)\simeq    \pi_s^+\paren{F_t^2\otimes_R Z^\bullet} \simeq \pi_s^+\paren{Z^\bullet\otimes_R F_t^2}  \qquad \forall Z^\bullet \in K^b(\sbim(\hf, \abrac{t}))
  \end{equation}
\end{lemma}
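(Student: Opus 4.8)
The plan is to obtain both assertions as the rank-one shadows of \cref{adjcor} and \cref{relsertheorem}: one replaces $I_2(m)$ by $\abrac{t}$, the distinguished generator $s$ by $t$, and the parabolic $\abrac{t}$ by the trivial subgroup $\set{e}$, so that $\sbim(\hf,\set{e})=\abrac{\boxed{R}}$ and the ambient ``relative full twist'' becomes $\ft_{\abrac{t}/\set{e}}=F_t^2$ (accordingly the $\pi_s^+$ in \cref{rank1releq} should read $\pi_t^+$, as the matching subscripts in \cref{relsertheorem} require). I would then transcribe, one level down, the chain \cref{partialprop} $\Rightarrow$ \cref{pis0},\cref{negpis0} $\Rightarrow$ \cref{semicor},\cref{adjcor} $\Rightarrow$ \cref{tenequiv} $\Rightarrow$ \cref{relsertheorem}. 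This is available because $\abrac{t}$ is itself a Coxeter system to which \cref{partialprop} applies — via the symmetry $s\leftrightarrow t$, parts (a) and (b) involve only $R$ and $B_t$, so $\pi_t^\pm(R)\cong R$ and $\pi_t^\pm(B_t)\cong R(\pm1)$. In particular $\pi_t^\pm$ preserve $\sbim(\hf,\abrac{t})$ with essential image $\sbim(\hf,\set{e})$, and by the remark after \cref{pidef} they remain counital/unital idempotents there; hence $\pi_t^-$ is a co-localization and $\pi_t^+$ a localization functor on $K^b(\sbim(\hf,\abrac{t}))$ with essential image $\abrac{\boxed{R}}$, and \cite[Lemma 4.4]{GHMN19} identifies them with the right and left adjoints of the inclusion $\abrac{\boxed{R}}\hookrightarrow K^b(\sbim(\hf,\abrac{t}))$, exactly as in \cref{adjcor}. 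This settles the first assertion.

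For the semi-orthogonal decompositions I would apply $\pi_t^+$ to $F_t=[\boxed{B_t}\to R(1)]$ and $\pi_t^-$ to $F_t^{-1}=[R(-1)\to\boxed{B_t}]$: in each case the term $B_t$ collapses to a shift of $R$ and the differential becomes the map induced by the (co)unit of $B_t$, which is an isomorphism — these are the rank-one instances of \cref{pis0}$(ii)$ and \cref{negpis0}$(ii)$ — so $\pi_t^+(F_t)\simeq0$ and $\pi_t^-(F_t^{-1})\simeq0$. Since each of $\set{R,F_t}$ and $\set{R,F_t^{-1}}$ generates $K^b(\sbim(\hf,\abrac{t}))$ ($B_t$ being a cone of a map between a shift of $R$ and $F_t^{\mp1}$, cf.\ \cref{inceq}), \cref{ghmn417}(3) gives $\ker\pi_t^+=\abrac{F_t}$, $\ker\pi_t^-=\abrac{F_t^{-1}}$ and $\im\pi_t^\pm=\abrac{\boxed{R}}$, and \cref{ghmn414} yields
\[
K^b(\sbim(\hf,\abrac{t}))\simeq\paren{\abrac{F_t^{-1}}\to\abrac{\boxed{R}}}\simeq\paren{\abrac{\boxed{R}}\to\abrac{F_t}}.
\]
Moreover $F_t^2\otimes_R(-)$ sends $\abrac{F_t^{-1}}$ into $\abrac{F_t}$, taking the generator $F_t^{-1}$ to $F_t$, with quasi-inverse $F_t^{-2}\otimes_R(-)$ — the (here essentially tautological) analogue of \cref{tenequiv}.

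The equivalence \cref{rank1releq} then follows as in \cref{relsertheorem}. First, $\pi_t^+(F_t^2)\simeq R$: from \cref{spliteq} (with $s$ replaced by $t$) there is a distinguished triangle $F_t^2\to R\to\mathrm{Tot}\paren{F_t(-1)\xrightarrow{\alpha_t}\boxed{F_t(1)}}\xrightarrow{+1}$, and since $\pi_t^+$ is triangulated with $\pi_t^+(F_t)\simeq0$, applying $\pi_t^+$ kills the last term, so $\pi_t^+(F_t^2)\xrightarrow{\sim}\pi_t^+(R)=R$. Next, for $Z^\bullet\in K^b(\sbim(\hf,\abrac{t}))$ the first decomposition provides a triangle $\pi_t^-(Z^\bullet)\xrightarrow{\eta}Z^\bullet\to C^-(Z^\bullet)\xrightarrow{+1}$ with $C^-(Z^\bullet)\in\abrac{F_t^{-1}}$; tensoring on the left by $F_t^2$ and applying $\pi_t^+$ kills $\pi_t^+\paren{F_t^2\otimes_R C^-(Z^\bullet)}$ since $F_t^2\otimes_R C^-(Z^\bullet)\in\abrac{F_t}=\ker\pi_t^+$, so $\pi_t^+\paren{F_t^2\otimes_R\pi_t^-(Z^\bullet)}\xrightarrow{\sim}\pi_t^+\paren{F_t^2\otimes_R Z^\bullet}$. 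Finally $\pi_t^-(Z^\bullet)\in\abrac{\boxed{R}}$ slides out of the triangulated functors $F_t^2\otimes_R(-)$ and $\pi_t^+$ (the rank-one form of \cref{lembilinear}, since $\abrac{\boxed{R}}$ is generated by the unit under shifts and cones), so $\pi_t^+\paren{F_t^2\otimes_R\pi_t^-(Z^\bullet)}=\pi_t^+(F_t^2)\otimes_R\pi_t^-(Z^\bullet)\simeq\pi_t^-(Z^\bullet)$; chaining the two isomorphisms gives $\pi_t^-(Z^\bullet)\simeq\pi_t^+(F_t^2\otimes_R Z^\bullet)$, and the right-tensored statement is identical using the second decomposition. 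The one step carrying genuine content is the vanishing $\pi_t^+(F_t)\simeq0$ (and its mirror $\pi_t^-(F_t^{-1})\simeq0$): as in \cref{pis0}, this comes down to checking that the multiplication (resp.\ comultiplication) of $B_t$ is nonzero on the distinguished degree-one generator of $\pi_t^\pm(B_t)\cong R(\pm1)$, which is transparent from the diagrammatics underlying \cref{partialprop}; the remainder is bookkeeping with the decompositions.
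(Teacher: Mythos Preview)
Your proposal is correct and follows precisely the route the paper takes: the paper's own proof is the single sentence ``the first statement follows from \cref{partialprop}, while \cref{rank1releq} follows from the same proof as \cref{relsertheorem} with $\ft_{\set{s,t}/t}\leftrightarrow F_t^2$, $\Uc_t^-\leftrightarrow\abrac{F_t^{-1}}$, $\Uc_t^+\leftrightarrow\abrac{F_t}$'', which is exactly the rank-one transcription you carry out in detail. You also correctly flag the typo: the $\pi_s^+$ in \cref{rank1releq} should be $\pi_t^+$, as is confirmed by the way the lemma is invoked in the Serre duality computation immediately after.
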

\begin{proof}
   The first statement follows from \cref{partialprop} while \cref{rank1releq} follows from the same proof as in \cref{relsertheorem} where we instead have $\ft_{\set{s,t}/t}\longleftrightarrow F_t^2$, $\Uc^-_t\longleftrightarrow \text{span of } \set{F_t^{-1}}$, and $\Uc^+_t\longleftrightarrow \text{span of}  \set{F_t}$.
\end{proof}

\begin{remark}
    $t$ and $s$ can be switched in the above two lemmas.
\end{remark}

\begin{theorem}[Serre Duality]
    For any two complexes $X^\bullet, Y^\bullet \in K^b(\sbim(\hf, I_2(m)))$ one has the following homotopy equivalences of complexes of right (or left) $R-$modules.
\[
\uhom_{R^e}(X^\bullet,Y^\bullet) \simeq \uhom_{R^e}(Y^\bullet, \mathrm{FT}_{\set{s,t}}^{-1} \otimes_R X^\bullet)^\vee \simeq \uhom_{R^e}(\mathrm{FT}_{\set{s,t}} \otimes_R Y^\bullet, X^\bullet)^\vee 
\]
\end{theorem}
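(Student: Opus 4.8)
The plan is to convert \emph{each} side of the claimed equivalence, via the biadjunction displayed just above, into a composite of partial traces, run the two relative Serre dualities to introduce the full twist, and then match the two answers using how $\Db$, $\omega$ and $(-)^\vee$ interact with $\pi_s^{\pm},\pi_t^{\pm}$. Concretely, $\uhom_{R^e}(X^\bullet,Y^\bullet)\xcong{\text{right }R}\uhom_{R^e}(R,(X^\bullet)^\vee\otimes_R Y^\bullet)$, and since $\uhom_{R^e}(R,-)$ is just $\hh^0=\ext^0_{R^e}(R,-)$ applied termwise, the lemma identifying $\hh^0$ with the iterated partial trace $\pi_t^-\circ\pi_s^-$ gives
$$\uhom_{R^e}(X^\bullet,Y^\bullet)\;\xcong{\text{right }R}\;\pi_t^-\pi_s^-\bigl((X^\bullet)^\vee\otimes_R Y^\bullet\bigr),\qquad \uhom_{R^e}(Y^\bullet,\ft_{\set{s,t}}^{-1}\otimes_R X^\bullet)\;\xcong{\text{right }R}\;\pi_t^-\pi_s^-\bigl((Y^\bullet)^\vee\otimes_R\ft_{\set{s,t}}^{-1}\otimes_R X^\bullet\bigr).$$
Writing $M^\bullet:=\ft_{\set{s,t}}\otimes_R (X^\bullet)^\vee\otimes_R Y^\bullet$ and using that $(-)^\vee$ is anti-monoidal, $\Db B_s\cong B_s$ (\cref{soelem}), $(F_s)^\vee\simeq F_s^{-1}$ so $(\ft_{\set{s,t}})^\vee\simeq\ft_{\set{s,t}}^{-1}$, and that $\ft_{\set{s,t}}$ is central, one has $(M^\bullet)^\vee\simeq (Y^\bullet)^\vee\otimes_R\ft_{\set{s,t}}^{-1}\otimes_R X^\bullet$; so the task reduces to the identity $\pi_t^-\pi_s^-\bigl((X^\bullet)^\vee\otimes_R Y^\bullet\bigr)\simeq\bigl(\pi_t^-\pi_s^-((M^\bullet)^\vee)\bigr)^\vee$. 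The third expression in the theorem follows from the second because $\ft_{\set{s,t}}$ is invertible and central, hence $\uhom_{R^e}(\ft_{\set{s,t}}\otimes_R Y^\bullet,X^\bullet)\simeq\uhom_{R^e}(Y^\bullet,\ft_{\set{s,t}}^{-1}\otimes_R X^\bullet)$.

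\textbf{Step 1: the forward chain.} For every $N^\bullet\in K^b(\sbim(\hf,I_2(m)))$ I would prove
$$\pi_t^-\pi_s^-(N^\bullet)\;\simeq\;\pi_t^-\pi_s^+\bigl(\ft_{\set{s,t}/t}\otimes_R N^\bullet\bigr)\;\simeq\;\pi_t^+\Bigl(F_t^2\otimes_R\pi_s^+\bigl(\ft_{\set{s,t}/t}\otimes_R N^\bullet\bigr)\Bigr)\;=\;\pi_t^+\pi_s^+\bigl(\ft_{\set{s,t}}\otimes_R N^\bullet\bigr),$$
where the first $\simeq$ is \cref{relsertheorem} (followed by the triangulated functor $\pi_t^-$), the second is the rank-one relative Serre duality \cref{rank1releq} applied to $Z^\bullet:=\pi_s^+(\ft_{\set{s,t}/t}\otimes_R N^\bullet)\in K^b(\sbim(\hf,\abrac t))$, and the equality uses that $F_t^2=\ft_{\set{t}}\in K^b(\sbim(\hf,\abrac t))$ together with the $\sbim(\hf,\abrac t)$-bilinearity of $\pi_s^+$ (\cref{lembilinear}) and $\ft_{\set{t}}\otimes_R\ft_{\set{s,t}/t}\simeq\ft_{\set{s,t}}$. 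Applying this with $N^\bullet=(X^\bullet)^\vee\otimes_R Y^\bullet$ identifies $\uhom_{R^e}(X^\bullet,Y^\bullet)$ with $\pi_t^+\pi_s^+(M^\bullet)$.

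\textbf{Step 2: the duality matching.} It remains to see $\pi_t^+\pi_s^+(M^\bullet)\simeq\bigl(\pi_t^-\pi_s^-((M^\bullet)^\vee)\bigr)^\vee$. I would get this from the two identities
$$\pi_s^-\bigl((N^\bullet)^\vee\bigr)\;\simeq\;\pi_s^+(N^\bullet)^\vee,\qquad \pi_t^-\bigl((W^\bullet)^\vee\bigr)\;\simeq\;\pi_t^+(W^\bullet)^\vee\quad\bigl(W^\bullet\in K^b(\sbim(\hf,\abrac t))\bigr),$$
applied in succession (first with $N^\bullet=M^\bullet$, then with $W^\bullet=\pi_s^+(M^\bullet)$, which lies in $K^b(\sbim(\hf,\abrac t))$ and whose $(-)^\vee$ stays there), using $(-)^{\vee\vee}=\mathrm{id}$. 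Each identity in turn rests on three inputs: $\Db$ is an exact contravariant auto-duality interchanging left and right $R$-multiplication, so $\Db\circ\pi_s^-\cong\pi_s^+\circ\Db$ and hence $\pi_s^-\circ\Db\cong\Db\circ\pi_s^+$ (since $\Db^2\cong\mathrm{id}$, a consequence of \cref{soelem}; the rank-one case is consistent with \cref{li72}); the functor $\omega$ is, on Soergel bimodules, the bimodule transpose, so it commutes with multiplication by the central element $\rho_s^e$ up to sign and therefore with $\pi_s^-$; and $(-)^\vee=\omega\circ\Db$ by \cref{wdveq}. Then $\pi_s^-((N^\bullet)^\vee)=\pi_s^-(\omega\Db N^\bullet)\simeq\omega\,\pi_s^-(\Db N^\bullet)\simeq\omega\,\Db\,\pi_s^+(N^\bullet)=\pi_s^+(N^\bullet)^\vee$, and the same argument with $s$ replaced by $t$ (and $\sbim(\hf,I_2(m))$ by $\sbim(\hf,\abrac t)$). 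Chaining Steps 1 and 2 and the reductions of the first paragraph closes the proof.

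\textbf{Main obstacle.} The genuinely new input is all packaged in \cref{relsertheorem} and \cref{rank1releq}; once those are available, the forward chain is mechanical. The delicate point is Step 2: \cref{li72}, \cref{soelem} and \cref{wdveq} are stated as isomorphisms of \emph{bimodules}, while the theorem — like the adjunction used in the first paragraph — only makes sense on one side, so I would need to verify that $\Db$, $\omega$ and $(-)^\vee$ are compatible with the one-sided $R$-module structures and with the grading shifts they carry, and that $\Db$ is exact on the four-term sequences $0\to\pi_s^-(\,\cdot\,)\to(\,\cdot\,)\xrightarrow{\rho_s^e}(\,\cdot\,)\to\pi_s^+(\,\cdot\,)\to 0$ so that it truly exchanges $\pi_s^-$ with $\pi_s^+$ (the sign $\Db(\rho_s^e\cdot)=-\rho_s^e\cdot$ being harmless). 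A secondary bookkeeping point is keeping track of whether one lands in $\hh^0$ or in $\hh_0$ at the end of Step 1 — the chain naturally produces $\pi_t^+\pi_s^+$, i.e.\ the co-centre, which is why the $(-)^\vee$ in Step 2 is essential rather than cosmetic.
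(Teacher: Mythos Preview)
Your proof is correct and follows essentially the same architecture as the paper's: Step~1 (the forward chain $\pi_t^-\pi_s^-\to\pi_t^+\pi_s^+$ via \cref{relsertheorem}, \cref{rank1releq} and \cref{lembilinear}) is identical, and the reduction to a single complex $M^\bullet=\ft_{\set{s,t}}\otimes_R(X^\bullet)^\vee\otimes_R Y^\bullet$ via biadjunction and anti-monoidality of $(-)^\vee$ is the same idea.

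The only genuine difference is in Step~2. The paper passes through Hochschild homology: it invokes the universal--coefficient spectral sequence $\ext_R^p(\hh_q(B),R)\Rightarrow\ext_{R^e}^{p+q}(R,\Db B)$, which collapses by \cref{li36}, to obtain $\hh_0(M^\bullet)^\vee\cong\hh^0(\Db M^\bullet)$ in one stroke, and then unfolds $\Db$ monoidally (\cref{soelem}) and converts via \cref{wdveq}. You instead argue the commutation $\Db\circ\pi_s^+\cong\pi_s^-\circ\Db$ and $\omega\circ\pi_s^\pm\cong\pi_s^\pm\circ\omega$ directly, one partial trace at a time, from exactness of $\Db$ on the four-term sequence defining $\pi_s^\pm$. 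This is a repackaging of the same freeness input (your exactness needs $\pi_s^\pm(B_w)$ free as a right $R$-module, which is \cref{partialprop}; note that \cref{li72} already states $\pi_s^+(\bs(\un w))\cong\Db(\pi_s^-(\bs(\un w)))$ explicitly, so you could also cite that instead of re-deriving it). Your route avoids spectral-sequence language and makes the role of $\omega$ more transparent; the paper's route is shorter once the spectral sequence is on the table. Both are valid.
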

\begin{proof}
    Let $B\in \sbim(\hf, I_2(m))$. From the universal coefficient spectral sequence we have that
    \[ \ext_R^p(\hh_q(B), R)\implies \ext_{R^e}^{p+q}(R, \hom_R(B, R)) \]
    But by \cref{li36}, $\hh^q(B)$ and thus $\hh_q(B)$ (see \cite[Lemma 3.4]{GHMN19}) is a free $R$ module and thus
    \begin{equation}
    \label{dhheq}
      \hh_q(B)^\vee=  \Db(\hh_q(B))\cong \hh^q(\Db(B)) \qquad \qquad \forall B\in \sbim(\hf, I_2(m))
    \end{equation}
    where the first equality is because $\omega=\id$ on complexes of $R-$modules. Now we compute
    \begin{align*}
        \uhom_{R^e}(X^\bullet, Y^\bullet)&\cong  \uhom_{R^e}(R, (X^\bullet)^\vee \otimes_R Y^\bullet)=\hh^0((X^\bullet)^\vee \otimes_R Y^\bullet) =\pi_t^- \circ \pi_s^- (  (X^\bullet)^\vee \otimes_R Y^\bullet ) \\
        & \xsim{\cref{relsereq}}  \pi_t^- \pi_s^+\paren{\ft_{\set{s,t}/t}\otimes_R (X^\bullet)^\vee \otimes_R Y^\bullet} \xsim{\cref{rank1releq}} \pi_t^+\paren{ F_t^2\pi_s^+\paren{\ft_{\set{s,t}/t}\otimes_R (X^\bullet)^\vee \otimes_R Y^\bullet} }
    \end{align*}
    By \cref{lembilinear}, we can pull $F_t^2$ inside $\pi_s^+$ and thus the RHS equals 
    \begin{align*}
        \hh_0\paren{\ft_{\set{s,t}}\otimes_R (X^\bullet)^\vee \otimes_R Y^\bullet} &\xcong{\cref{dhheq}} \hh^0\paren{ \Db\paren{ \ft_{\set{s,t}}\otimes_R (X^\bullet)^\vee \otimes_R Y^\bullet } } ^\vee \xcong{} \uhom_{R^e}\paren{R, \Db\paren{ \ft_{\set{s,t}})\otimes_R \Db((X^\bullet)^\vee) \otimes_R \Db( Y^\bullet) }}^\vee \\
        &\cong \uhom_{R^e}\paren{\Db( Y^\bullet)^\vee, \Db\paren{ \ft_{\set{s,t}})\otimes_R \Db((X^\bullet)^\vee)  }}^\vee\xcong{\cref{wdveq}} \uhom_{R^e}\paren{\omega( Y^\bullet), \omega\paren{ \ft_{\set{s,t}}^{-1}}\otimes_R \omega(X^\bullet)  }^\vee \\
        &\cong\uhom_{R^e}\paren{\omega( Y^\bullet), \omega\paren{ \ft_{\set{s,t}/t}^{-1}\otimes_R \omega(X^\bullet)}  }\cong \uhom_{R^e}\paren{ Y^\bullet,  \ft_{\set{s,t}/t}^{-1}\otimes_R \omega(X^\bullet)  }^\vee
    \end{align*}
\end{proof}

\section{Remarks about the General Case}
\label{gensect}

Given a general Coxeter system $(W, S)$ and a parabolic subgroup $W_I$, where $I\subset S$, it was proven in \cite[Appendix A]{GHMN19} that the left and right adjoints to the natural inclusion functor $K^b(\sbim(\hf, W_I))\to K^b(\sbim(\hf, W))$ exists. We conjecture that they have an explicit form, namely

\begin{conjecture}
\label{conj2}
    Let $\hf$ be a realization of $W$ with fundamental weights $\set{\rho_s}_{s\in S}$. Given a subset $I\subset S$, let $\set{s_1, \ldots, s_\ell}$ denote the complement of $I$ in $S$. Let $\pi_L, \pi_R:K^b(\sbim(\hf, W))\to  K^b(\sbim(\hf, W_I))$ be the left and right adjoints to the inclusion functor $K^b(\sbim(\hf, W_I))\to K^b(\sbim(\hf, W))$. Then 
    \[ \pi_L\cong \pi^+_{s_1}\circ \ldots \circ \pi^+_{s_\ell} , \qquad \qquad  \pi_R\cong \pi^-_{s_1}\circ \ldots \circ \pi^-_{s_\ell}  \]
   This is independent of the order chosen for the complement of $I$ in $S$.
\end{conjecture}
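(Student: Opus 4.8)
The plan is to reduce the statement to the one-generator case, which is the direct analogue of \cref{partialprop}, \cref{adjcor} and \cref{semicor}, and then assemble the general case by composing adjunctions. Write $S\setminus I=\set{s_1,\dots,s_\ell}$ and factor the inclusion as a chain of one-generator extensions
\[
\sbim(\hf,W_I)\hookrightarrow\sbim(\hf,W_{I\cup\set{s_1}})\hookrightarrow\cdots\hookrightarrow\sbim(\hf,W_{I\cup\set{s_1,\dots,s_{\ell-1}}})\hookrightarrow\sbim(\hf,W).
\]
Since a right (resp.\ left) adjoint of a composite is the composite of the adjoints in the opposite order, it suffices to prove the following statement $(\star)$: for every Coxeter system $(W,S)$ with a realization $\hf$ having fundamental weights and every $s\in S$, the functor $\pi^-_s$ (resp.\ $\pi^+_s$) is the right (resp.\ left) adjoint to $K^b(\sbim(\hf,W_{S\setminus\set{s}}))\hookrightarrow K^b(\sbim(\hf,W))$. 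Indeed, applying $(\star)$ to each Coxeter system $(W_{I\cup\set{s_1,\dots,s_j}},\,I\cup\set{s_1,\dots,s_j})$ with generator $s_j$ -- noting that the right adjoint of the $j$-th inclusion is exactly the ``kernel of $\rho^e_{s_j}$'' functor and that $\rho_{s_j}$ is still a fundamental weight for the restricted realization -- one obtains $\pi_R\cong\pi^-_{s_1}\circ\cdots\circ\pi^-_{s_\ell}$ and $\pi_L\cong\pi^+_{s_1}\circ\cdots\circ\pi^+_{s_\ell}$, as claimed. Independence of the order is then immediate: a right (resp.\ left) adjoint is unique up to isomorphism and any ordering of $S\setminus I$ realizes the same inclusion; alternatively the $\pi^-_{s_i}$ manifestly commute, since $\pi^-_{s_i}\circ\pi^-_{s_j}(M)=\set{m\in M:\rho^e_{s_i}m=\rho^e_{s_j}m=0}$, and dually for $\pi^+$.

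\textbf{Reducing $(\star)$ to one central statement.} Since $\abrac{\rho_s,\alpha^\vee_t}=\delta_{st}$, we have $\rho_s\in R^t$ for every $t\in S\setminus\set{s}$, hence $\rho_s\otimes1=1\otimes\rho_s$ in $B_t$ and so $\rho^e_s$ acts as zero on every object of $\sbim(\hf,W_{S\setminus\set{s}})$; thus $\pi^{\pm}_s$ restrict to the identity on that subcategory. The functors $\pi^-_s=\ker\rho^e_s$ and $\pi^+_s=\mathrm{coker}\,\rho^e_s$ are additive and idempotent, equipped with the evident transformations $\pi^-_s\to\id$ and $\id\to\pi^+_s$; being additive they descend to triangulated endofunctors of $K^b$, and, exactly as in the Remark following \cref{pidef}, they are counital/unital idempotents, hence co-localization/localization functors. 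By \cite[Lemma 4.4]{GHMN19} it then suffices to identify the essential image of $\pi^-_s$ (resp.\ $\pi^+_s$) on $K^b$ with $K^b(\sbim(\hf,W_{S\setminus\set{s}}))$: the containment ``$\supseteq$'' is the previous sentence, while ``$\subseteq$'' follows once one knows
\[
(\mathrm{CS})\qquad \pi^{\pm}_s(B_w)\in\sbim(\hf,W_{S\setminus\set{s}})\quad\text{for every }w\in W,
\]
the exact generalization of \cref{partialprop}, for then every term of $\pi^-_s(M^\bullet)$ lies in $\sbim(\hf,W_{S\setminus\set{s}})$. Only this membership is needed for $(\star)$; the precise values of $\pi^{\pm}_s(B_w)$, together with an analogue of \cref{negpis0}, would be required to go further and deduce the full Relative Serre Duality statement \cref{ghmnconj}.

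\textbf{The main obstacle.} Everything above is formal once $(\mathrm{CS})$ is known, so the conjecture genuinely rests on establishing $(\mathrm{CS})$. In the dihedral case this is precisely where \cite{Li25} enters: \cref{li72} identifies $\ker\rho^e_s(\bs(\un w))$ with the $\hom$-space $\hom_{R^e}(B_t,\bs(\un w))(1)$, and projecting with the Jones-Wenzl idempotent as in \cref{eq2} gives $\pi^-_s(B_w)=\jw_{\un w}\bigl(\hom_{R^e}(B_t,\bs(\un w))(1)\bigr)\in\sbim(\hf,\abrac{t})$. For a general Coxeter system one expects, by the same diagrammatic Hochschild-cohomology machinery, an identification $\ker\rho^e_s(\bs(\un w))\cong\hom_{R^e}(B',\bs(\un w))(1)$ for a fixed ``singular'' bimodule $B'\in\sbim(\hf,W_{S\setminus\set{s}})$, after which projecting to $B_w$ yields $(\mathrm{CS})$. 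A second, more conceptual route is support-theoretic: $\rho^e_s$ acts on the standard bimodule $\Delta_x$ as multiplication by $\rho_s-x(\rho_s)$, a non-zero-divisor unless $x(\rho_s)=\rho_s$, and under a mild non-degeneracy hypothesis on $\hf$ (automatic for reflection-faithful realizations) $\set{x\in W:x(\rho_s)=\rho_s}=W_{S\setminus\set{s}}$; one would then show that for a Soergel bimodule $M$ both $\ker\rho^e_s(M)$ and $\mathrm{coker}\,\rho^e_s(M)$ inherit $\Delta$- and $\nabla$-filtrations supported on $W_{S\setminus\set{s}}$ and conclude via Soergel's classification. The hard part, in either approach, is to control the kernel and cokernel on \emph{arbitrary} Bott-Samelson bimodules $\bs(\un w)$ rather than just on indecomposables -- one must pin down not merely the graded characters but the actual extension structure of the standard filtrations -- and to determine precisely which non-degeneracy hypotheses on $\hf$ make $(\mathrm{CS})$ true. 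This is the ``one or two central statements'' to which the conjecture should be reducible.
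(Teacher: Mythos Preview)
Your proposal is not a proof but rather a reduction strategy, and it matches the paper's own treatment: the statement is presented in the paper as a \emph{conjecture}, not a theorem, and the paper's discussion in \cref{gensect} does exactly what you do. The paper says ``it suffices to consider the maximal parabolic case $W\setminus\set{s}\subset W$'' (your reduction to $(\star)$ via composing one-generator adjunctions) and then ``\cref{conj2} follows from showing that $\pi^{\pm}_s$ sends Soergel Bimodules to Soergel Bimodules'', i.e.\ exactly your statement $(\mathrm{CS})$, which the paper labels \cref{pisbimeq}. Both you and the paper invoke the localization/colocalization argument from \cite[Lemma 4.4]{GHMN19}, as in \cref{adjcor}, to pass from $(\mathrm{CS})$ to the adjunction.

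Where you go slightly beyond the paper is in sketching two candidate attacks on $(\mathrm{CS})$: a diagrammatic one (the paper also gestures at this, saying a diagrammatic presentation of Ext-enhanced Soergel bimodules for arbitrary $W$ should yield \cref{pisbimeq}) and a support-theoretic one via $\Delta$-filtrations and the stabilizer of $\rho_s$. The latter is not in the paper and is a reasonable heuristic, but as you yourself note, controlling the extension structure of the filtration on $\ker\rho^e_s$ and $\mathrm{coker}\,\rho^e_s$ is the genuine difficulty, so this remains speculative. In short: your reduction is correct and coincides with the paper's, and you correctly isolate $(\mathrm{CS})$ as the unproven core; neither you nor the paper claims a proof.
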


\begin{remark}
   The functors used in the proof of \cref{ghmnconj} in Type A for the special case of subgroups $S_r\times (S_1)^{n-r}\subset S_n$ do not fit the framework above because $\set{x_2, \ldots, x_n}$ are not fundamental weights for the $\Cb^n$ realization of $S_n$. However, $x_n$ is a fundamental weight. Thus, for the corresponding subgroup $S_r\times (S_1)^{n-r}$, the composition $$\pi^{\pm}=\pi^{\pm}_{x_{r+1}}\circ \ldots \circ \pi^{\pm}_{x_n}$$ 
   in this order gives the correct adjoints, but using these functors for the other parabolic subgroups of $S_n$ will fail.
\end{remark}

One can show that it suffices to consider the maximal parabolic case $W\setminus\set{s}\subset W$ for \cref{conj2}. Similar to \cref{adjcor}, \cref{conj2} follows from showing that $\pi^{\pm}_s$ sends Soergel Bimodules to Soergel Bimodules, namely
\begin{equation}
\label{pisbimeq}
    \pi^{\pm}_s(B_w)\in \sbim(\hf, W_{S\setminus \set{s}}) \quad \forall w\in W
\end{equation}

If one works with the homotopy category, it also suffices to show $\pi^{\pm}_s$ sends Bott-Samelsons to Soergel Bimodules. For the maximal parabolic $S_{n-1}\times S_1\subset S_n$ considered in \cite{GHMN19}, \cref{pisbimeq} easily follows from the fact that any $w\in S_n \setminus (S_{n-1}\times S_1)$ can be written as $w=us_{n-1} v$ where 
$u, v\in S_{n-1}\times S_1$. Another easy consequence of this fact is
\begin{equation}
\label{kereq}
    \ker \pi^+_{s_{n-1}}=\abrac{F_w| w\in W, \, w\not\in S_{n-1}\times S_1} \qquad  \ker \pi^-_{s_{n-1}}=\abrac{F_w^{-1} |w\in W, \, w\not\in S_{n-1}\times S_1}
\end{equation}
For other maximal parabolics\footnote{even in Type $A$.} of a Coxeter group, this fact is not true as the double coset space has size greater than 2. We overcome this obstacle in this paper by employing the tools developed in \cite{Li25}, arising from the diagrammatic presentation of Ext-enhanced Soergel bimodules in rank 2. We expect that a diagrammatic presentation of Ext-enhanced Soergel bimodules for arbitrary Coxeter groups $W$ will yield \cref{pisbimeq} and the analogous \cref{kereq} for $W_I\subset W$, from which \cref{conj2} and \cref{ghmnconj} will readily follow.   

\section{Computations of Triply-graded link homology}\label{whiteheadsect}
\begin{prop}[Gaussian Elimination {\cite[Proposition 5.16]{Eli18}}]
\label{gausselim}
Let $\Ac$ be an additive category. Suppose that $C^\bullet\in C(\Ac)$ has the form 
\begin{equation}
\label{gausseq}
    \ldots\to C^{k-1} \xrightarrow{\begin{pmatrix}
b\\
a
\end{pmatrix}} X\oplus D^k  \xrightarrow{\begin{pmatrix}
\psi & d \\
e & c
\end{pmatrix}} Y\oplus  D^{k+1} \xrightarrow{\begin{pmatrix}
g & f\\
\end{pmatrix}} C^{k+2} 
\end{equation} 
where $\psi: X\to Y$ is an isomorphism. Then $C^\bullet\simeq \widetilde{C^\bullet}$ in $K(\Ac)$ where $\widetilde{C^\bullet}$ is the complex
\[  \scalemath{1.2}{\ldots \to C^{k-1}\xrightarrow{a} D^k \xrightarrow{c-e\psi^{-1}d}D^{k+1} \xrightarrow{f} C^{k+2} \to \ldots } \]
with projection 
\begin{tikzcd}
     X\oplus D^k   \ar[d, "(0 \ 1 )"]\ar[r] & Y\oplus D^{k+1}   \ar[d, "(-e\psi^{-1} \ 1)"]  \\
     D^k  \ar[r] & D^{k+1}   
\end{tikzcd}
and inclusion 
\begin{tikzcd}
      D^k  \ar[r]\ar[d, "\begin{bmatrix}
    -\psi^{-1} d \\
    1
    \end{bmatrix}
    "] & D^{k+1}  \ar[d, "\begin{bmatrix}
    0 \\
    1
    \end{bmatrix}
    "]  \\
    X\oplus D^k  \ar[r] & Y\oplus D^{k+1}
\end{tikzcd}.
\end{prop}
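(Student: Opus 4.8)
The plan is the standard one for Gaussian elimination: exhibit explicit chain maps $p\colon C^\bullet\to\widetilde{C^\bullet}$ and $\iota\colon\widetilde{C^\bullet}\to C^\bullet$ — the projection and inclusion already written in the statement are the natural candidates — show that $p\circ\iota=\id_{\widetilde{C^\bullet}}$ on the nose, and produce an explicit nullhomotopy witnessing $\iota\circ p\simeq\id_{C^\bullet}$. All of this reduces to matrix arithmetic with the relations coming from $d_C^2=0$.

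First I would record the relations around degree $k$: expanding $\begin{pmatrix}\psi & d\\ e & c\end{pmatrix}\begin{pmatrix}b\\ a\end{pmatrix}=0$ and $\begin{pmatrix}g & f\end{pmatrix}\begin{pmatrix}\psi & d\\ e & c\end{pmatrix}=0$ and using that $\psi$ is invertible gives $b=-\psi^{-1}da$, $g=-fe\psi^{-1}$, and then $(c-e\psi^{-1}d)a=0$ and $f(c-e\psi^{-1}d)=0$. The last two say that $\widetilde{C^\bullet}$ really is a complex, with middle differential $c-e\psi^{-1}d$; the first two (plus trivial commutativity of every square away from the altered spot) say that $p$ and $\iota$ are chain maps. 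Checking $p\circ\iota=\id_{\widetilde{C^\bullet}}$ is then a one-line matrix identity in the spots $D^k$ and $D^{k+1}$, the identity elsewhere.

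The only step requiring a moment's thought is the nullhomotopy. I would take $h$ supported in a single degree, namely $h\colon Y\oplus D^{k+1}\to X\oplus D^k$ given by $\begin{pmatrix}\psi^{-1}&0\\ 0&0\end{pmatrix}$ with $h=0$ everywhere else, and verify that $d_C h+h d_C$ agrees with $\id_{C^\bullet}-\iota\circ p$ spot by spot: at $X\oplus D^k$ and at $Y\oplus D^{k+1}$ both sides are the obvious $2\times 2$ matrices built from $\psi^{-1}d$ and $e\psi^{-1}$, and everywhere else $\iota\circ p$ is already the identity so both sides vanish. This yields $C^\bullet\simeq\widetilde{C^\bullet}$ in $K(\Ac)$, and one reads off from $p$, $\iota$, $h$ that they induce the stated projection and inclusion maps.

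I do not expect any genuine obstacle here — the content is entirely bookkeeping. The two points to watch are: (i) sign and degree conventions for $d_C$ and for $h$ (in the standard convention, where $h$ lowers cohomological degree and the homotopy relation reads $\id-\iota p=d_Ch+h d_C$, the homotopy above works with no auxiliary signs; a different convention would force a sign on $h$); and (ii) the boundary cases where $C^{k-1}=0$ or $C^{k+2}=0$, in which $a,b$ or $f,g$ are forced to vanish and the relevant relations degenerate, but the argument goes through verbatim.
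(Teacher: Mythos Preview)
Your argument is correct and is the standard proof of Gaussian elimination: the relations $b=-\psi^{-1}da$, $g=-fe\psi^{-1}$, $(c-e\psi^{-1}d)a=0$, $f(c-e\psi^{-1}d)=0$ follow as you say, the chain-map and $p\iota=\id$ checks are routine, and the single-degree homotopy $h=\begin{pmatrix}\psi^{-1}&0\\0&0\end{pmatrix}$ does satisfy $d_Ch+hd_C=\id-\iota p$ at both nontrivial spots.

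The paper itself does not prove this proposition; it simply quotes it from \cite[Proposition 5.16]{Eli18} and then records the special-case corollary. So there is nothing to compare against beyond noting that your outline is exactly the classical verification one finds in the cited source.
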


\begin{corollary}
In the above proposition, if $d$ or $e=0$ (for example if $D^k=0$ or $D^{k+1}=0$) we can remove $X$ and $Y$ without worrying about any changes to the differential.
\end{corollary}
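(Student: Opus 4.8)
The plan is to simply invoke Proposition~\ref{gausselim} and read off the formula for the differential in the reduced complex $\widetilde{C^\bullet}$. Under Gaussian elimination the only arrow that changes is the one from $D^k$ to $D^{k+1}$, which becomes $c - e\psi^{-1}d$; every other differential (the maps denoted $\ldots, a, f, \ldots$) is literally a surviving component of the original differential and is left untouched. So the entire content of the corollary reduces to the observation that the correction term $e\psi^{-1}d$ vanishes under either hypothesis.

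First I would note that if $d=0$ then $e\psi^{-1}d = e\psi^{-1}\cdot 0 = 0$, and symmetrically if $e=0$ then $e\psi^{-1}d = 0\cdot\psi^{-1}d = 0$. In both cases $c - e\psi^{-1}d = c$, so the induced map $D^k\to D^{k+1}$ in $\widetilde{C^\bullet}$ coincides with the component $c$ of the original differential. For the parenthetical remarks: if $D^k=0$ then $d$ is a morphism out of the zero object, hence $d=0$; and if $D^{k+1}=0$ then $e$ is a morphism into the zero object, hence $e=0$. Thus these two special cases are instances of the general criterion $d=0$ or $e=0$.

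The conclusion is then immediate: $C^\bullet\simeq\widetilde{C^\bullet}$ in $K(\Ac)$ by Proposition~\ref{gausselim}, and since the correction term vanishes, $\widetilde{C^\bullet}$ is obtained from $C^\bullet$ by deleting the summands $X$ and $Y$ while keeping every remaining differential (in particular $c$) exactly as is. There is no real obstacle here — the statement is pure bookkeeping on top of the elimination formula. The only point requiring a moment's care is matching up which of $d$ and $e$ corresponds to which of the vanishing conditions $D^k=0$ and $D^{k+1}=0$, which I would spell out explicitly as above.
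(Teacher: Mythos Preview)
Your proof is correct; the paper states this corollary without proof, treating it as immediate from the formula $c - e\psi^{-1}d$ in Proposition~\ref{gausselim}, which is exactly the observation you spell out.
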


\subsection{The Whitehead link}

The Whitehead link $L5a1$ has a braid representative given by $s^{-2}ts^{-1}t$. Let $W^\bullet=F_s^{-2}F_tF_s^{-1}F_t$. As complexes, we have that (note $R=\kb[\alpha_s, \alpha_t]$ below)
\begin{equation*}
F_s^{-2}\simeq R(-2) \xrightarrow{\runit} B_s(-1) \xrightarrow{\ds} \boxed{B_s(1)}  \qquad F_tF_s^{-1}=\scalemath{0.92}{
    B_t(-1) \xrightarrow{ \begin{pmatrix}
-\bcounit \\
\bzero \runit
\end{pmatrix} } \boxed{R\oplus B_t B_s} \xrightarrow{\begin{pmatrix}
\raisebox{-5pt}{ \runit}  &  \bcounit \rzero \\     
\end{pmatrix}} B_s(1) } 
\end{equation*}

Using \cref{gausselim}, the complex $F_tF_s^{-1}B_t$ is homotopy equivalent to $B_t(-2)\xrightarrow{\begin{tikzpicture}[scale=-0.6,line cap=round,line join=round]
  \draw[very thick,blue]
    (-0.5,0) .. controls (-0.5,0.7) and (-0.3,0.9) .. (0,0.9)
             .. controls ( 0.3,0.9) and ( 0.5,0.7) .. (0.5,0);
  \draw[very thick,blue] (0,0.9) -- (0,1.4);
  \draw[very thick,red,] (0,0) -- (0,0.5);
  \node[dot, red] at (0,0.5) {};
  \node[btridot] at (0.5,0.4) {$\scalemath{0.85}{d_t}$};
  \node at (1,0.4) {$-$};
\end{tikzpicture}
}B_tB_sB_t \xrightarrow{ \bcounit \rzero \bzero }B_sB_t(1)$ where

\[ \begin{pmatrix}
\psi & d \\
e & c
\end{pmatrix}=\begin{pmatrix}
-\bzero & -\boxed{\alpha_t/2} \bzero \\
\begin{tikzpicture}[scale=-0.6,line cap=round,line join=round]
  \draw[very thick,blue]
    (-0.5,0) .. controls (-0.5,0.7) and (-0.3,0.9) .. (0,0.9)
             .. controls ( 0.3,0.9) and ( 0.5,0.7) .. (0.5,0);
  \draw[very thick,blue] (0,0.9) -- (0,1.4);
  \draw[very thick,red,] (0,0) -- (0,0.5);
  \node[dot, red] at (0,0.5) {};
\end{tikzpicture} & \begin{tikzpicture}[scale=-0.6,line cap=round,line join=round]
  \draw[very thick,blue]
    (-0.5,0) .. controls (-0.5,0.7) and (-0.3,0.9) .. (0,0.9)
             .. controls ( 0.3,0.9) and ( 0.8,0.9) .. (1,0);
  \draw[very thick,blue] (0,0.9) -- (0,1.4);
  \draw[very thick,red,] (0,0) -- (0,0.3);
  \node[dot, red] at (0,0.3) {};
  \node at (0.5,0.3) {$\scalemath{0.7}{\frac{\alpha_s}{2}}$};
\end{tikzpicture}
\end{pmatrix} \]
Using the inclusion map into the original complex $F_tF_s^{-1}B_t$, we see that $F_tF_s^{-1}F_t$ is homotopy equivalent to

\[\xymatrix@R=4em{
    B_t \ar[r]^-{ \mathrel{ \begin{pmatrix}
-\bcounit \\
\bzero \runit
\end{pmatrix}}} & R(1)\oplus B_tB_s(1) \ar[r]^-{ \mathrel{ \begin{pmatrix}
\runit & \bcounit \rzero 
\end{pmatrix}}}  & B_s(2) \\
B_t(-2) \ar[u]^-{
    \dt
} \ar[r]^-{\begin{tikzpicture}[scale=-0.6,line cap=round,line join=round]
  \draw[very thick,blue]
    (-0.5,0) .. controls (-0.5,0.7) and (-0.3,0.9) .. (0,0.9)
             .. controls ( 0.3,0.9) and ( 0.5,0.7) .. (0.5,0);
  \draw[very thick,blue] (0,0.9) -- (0,1.4);
  \draw[very thick,red,] (0,0) -- (0,0.5);
  \node[dot, red] at (0,0.5) {};
  \node[btridot] at (0.5,0.4) {$\scalemath{0.85}{d_t}$};
  \node at (1,0.4) {$-$};
\end{tikzpicture} } & \boxed{B_tB_sB_t} \ar[u]_{\begin{pmatrix}
    0 \\
    \bzero \rzero \bcounit
\end{pmatrix}} \ar[r]^{\bcounit \rzero \bzero} & B_s B_t(1) \ar[u]_{-\rzero \bcounit}
} \]
Applying $\pi_t^-(-)$ produces the complex on the left below while applying $\pi_t^+(-)(2)$ produces the complex on the right

\begin{equation}
\label{twocomplex}
 \xymatrix@R=4em@C=3.5em{
R(-1) \ar[r]^{\begin{pmatrix}
    -\alpha_t \\
    \runit
\end{pmatrix}}& R(1)\oplus B_s \ar[r]^{\begin{pmatrix}
    \runit & \alpha_t \rzero
\end{pmatrix}}& B_s(2) \\
R(-3) \ar[u]^{0} \ar[r]^-{\begin{pmatrix}
    \runit \\
    -\alpha_t 
\end{pmatrix}} & \boxed{B_s(-2)\oplus R(-1)} \ar[u]_{\begin{pmatrix}
    0 & 0 \\
     \rzero \alpha_t & \runit
\end{pmatrix}} \ar[r]^-{ \quad \begin{pmatrix}
      \alpha_t \rzero  & \runit
\end{pmatrix}} & B_s \ar[u]_{- \rzero \alpha_t}
} \qquad 
\xymatrix@R=4em@C=3.5em{
R(3) \ar[r]^{\begin{pmatrix}
    -1 \\
    \runitd
\end{pmatrix}}& R(3)\oplus B_s(4) \ar[r]^{\begin{pmatrix}
    \runitd & 1
\end{pmatrix}}& B_s(4) \\
R(1) \ar[u]^{0} \ar[r]^-{\begin{pmatrix}
    0 \\
    \alpha_s+\alpha_t
\end{pmatrix}} & \boxed{B_s(4)\oplus R(3)} \ar[u]_{\begin{pmatrix}
    0 & 0 \\
    1 & 0
\end{pmatrix}} \ar[r]^-{ \quad \begin{pmatrix}
    1 & 0
\end{pmatrix}} & B_s(4) \ar[u]_{-1}
}
\end{equation}

where $\ell_{\alpha_t}, r_{\alpha_t}$ are left and right multiplication of $\alpha_t$. Using the LHS, $\pi_t^-(W^\bullet)=F_s^{-2} \pi_t^-(F_tF_s^{-1}F_t)$ is homotopic to

\[
\xymatrix@R=6em@C=5.5em{
B_s(-2) \ar[r]^-{\begin{pmatrix}
    0 \\
   \rzero \runit \\
    -\rzero \alpha_t 
\end{pmatrix}} & \boxed{B_s\oplus (B_sB_s(-1) \oplus B_s ) } \ar[r]^-{\begin{pmatrix}
    -\rzero\alpha_t & 0 & 0 \\
   \rzero \runit & \rzero \rzero \alpha_t & \rzero \runit \\
    0 & \rzero \alpha_t \rzero & \rzero \runit 
\end{pmatrix}}&  (B_s(2) \oplus B_sB_s(1))\oplus B_sB_s(1) \ar[r]^-{\begin{pmatrix}
   \rzero \runit & \rzero \alpha_t \rzero & -\rzero \rzero \alpha_t 
\end{pmatrix}}& B_sB_s(3) \\  
B_s(-4) \ar[u]_{-\ds} \ar[r]^-{d_{-1}} & B_s(-2)\oplus (B_sB_s(-3) \oplus B_s(-2) )\ar[u]_{\scalemath{0.9}{\begin{pmatrix}
\ds & 0 & 0 \\
0 & \ds \rzero & 0 \\
0 & 0 & \ds 
\end{pmatrix}}}  \ar[r]^-{d_0}&  (B_s \oplus B_sB_s(-1))\oplus B_sB_s(-1) \ar[r]^-{d_1} \ar[u]_{\scalemath{0.9}{\begin{pmatrix}
-\ds & 0 & 0 \\
0 & -\ds \rzero & 0 \\
0 & 0 & -\ds \rzero
\end{pmatrix}}} & B_sB_s(1) \ar[u]_{\ds \rzero} \\ 
R(-5) \ar[u]_{-\runitd } \ar[r]^-{\overline{d_{-1}}}& R(-3)\oplus (B_s(-4)\oplus R(-3)) \ar[u]_{\scalemath{0.9}{\begin{pmatrix}
\runitd & 0 & 0 \\
0 & \runit \rzero & 0 \\
0 & 0 & \runitd
\end{pmatrix}}}  \ar[r]^-{\overline{d_0}}& (R(-1)\oplus B_s(-2))\oplus B_s(-2)\ar[u]_{\scalemath{0.9}{\begin{pmatrix}
-\runitd & 0 & 0 \\
0 & -\runit \rzero & 0 \\
0 & 0 & -\runitd \rzero
\end{pmatrix}}}  \ar[r]^-{\overline{d_1}} & B_s \ar[u]_{\runit\rzero}
}
\]
where $d_i$ is the same as the differential above it and $\overline{d_i}$ is $d_i$ without \rzero \ on the left. To calculate $\overline{\hhh}^{A=0}$ we apply $\hh^0$ and obtain the complex (we are ordering the summands of the diagonals from left to right and the numbers indicate which summand of the diagonal the basis vectors are from)
\[ \scalemath{0.88}{ \sbrac{\boxed{1}(-3)\oplus \runitd (-4)\oplus \boxed{1}(-3) }_2 \xrightarrow{\begin{pmatrix}
     0 & 0 &0 \\
     1 & 0 &0 \\
     0 & 1 &0 \\
     0 & 0 &0 \\
     0 & 0 &1 \\
     -\alpha_t & 0 &0 \\
    1 & \alpha_t &1 \\
     0 & \alpha_t &1 \\
\end{pmatrix}} \sbrac{\runitd(-2)}_1\oplus \sbrac{\runitd(-2)\oplus \paren{\runitd\runitd(-3)\oplus \rcup (-3) }\oplus \runitd(-2) }_2\oplus \sbrac{\boxed{1}(-1)\oplus \runitd(-2)\oplus \runitd(-2)  }_3} \]
\[\scalemath{0.9}{ \xrightarrow{\begin{pmatrix}
    0 & 0 & 0 & 0 & 0 & 0 & 0 &0 \\
    1 & 0 & 0 & -1 & 0 & 0 & 0 &0 \\
    0 & 0 & 0 &\alpha_s & 0 & 0 &0 & 0 \\
    -\alpha_t & 0 & 0 & 0 & 0 & 0 &0 & 0 \\
    0 & -\alpha_t & 0 & 0 & 0 & -1 & 0 & 0 \\
    0 & 1 & \alpha_t & 0 & 1 & 0 & -1 & 0 \\
    0 & 0 & 0 & \alpha_t & 0 & 0 & 0 & 0 \\
    0 & 0 & \alpha_t & -1 & 1 & 0 & 0 & -1 \\
    0 & 0 & 0 & \alpha_t+\alpha_s & 0 & 0 & 0 & 0 \\
    0 & 0 & 0 & 0 & 0 & 1 & \alpha_t & -\alpha_t \\
\end{pmatrix}}\boxed{\sbrac{\runitd \oplus\paren{ \runitd\runitd(-1) \oplus \rcup (-1) }\oplus \runitd  }_1\oplus \sbrac{ \runitd \oplus \paren{\runitd \runitd (-1)\oplus \rcup (-1)}\oplus \paren{\runitd \runitd (-1) \oplus \rcup (-1)} }_2\oplus \sbrac{\runitd}_3}} \]
\[ \xrightarrow{\begin{pmatrix}
    -\alpha_t & 0 & 0 & 0 & 0 & 0&0 &0 &0 &0 \\
    1 & \alpha_t & 0 & 1 & 0 & 0 & 1 & 0 & 0 & 0 \\
    0 & 0 & \alpha_t & 0 & 0 & 0 & -\alpha_s & 0 & 0 & 0 \\
    0 & \alpha_t & -1 & 1 & 0 & 0 & 0 & 0 & 1 & 0 \\
    0 & 0 & \alpha_t +\alpha_s & 0 & 0 &0 & 0 &0 & -\alpha_s & 0\\
    0 & 0 & 0 & 0 & 1 & \alpha_t & -1 & -\alpha_t & 0 & 1\\
    0 & 0 & 0 & 0& 0  & 0 & \alpha_t+\alpha_s & 0 & -\alpha_t & 0
\end{pmatrix}} \sbrac{\runitd(2)\oplus \paren{\runitd \runitd (1)\oplus \rcup (1)} \oplus \paren{\runitd\runitd (1)\oplus \rcup (1)}}_1 \oplus \sbrac{\runitd \runitd(1)\oplus \rcup(1) }_2  \]
\[ \xrightarrow{\begin{pmatrix}
    1 & \alpha_t & -1 & -\alpha_t & 0  & 0 &-1 \\
    0 & 0 &\alpha_t+\alpha_s & 0 & -\alpha_t  & 0 & \alpha_s
\end{pmatrix}} \runitd \runitd(3) \oplus \rcup(3)\]
Note that $\hh^0(\runitd)=1$, and so we have applied Gaussian elimination to $\hh^0(R(-5)\xrightarrow{-\runitd} B_s(-4))$ at the cohomological degree $-2$ spot. The nonzero cohomology at $A=0$ will be
\[ \paren{\overline{\hhh}^{A=0}}^{T=1}=\begin{bmatrix}
    0 & 0 & 1 & 0 & 1 & 0 & -1
\end{bmatrix} ^T \kb (1)=\kb(1), \qquad \paren{\overline{\hhh}^{A=0}}^{T=2}= \paren{\rcup} \kb(3)=\kb(3)  \]
For $A=1$, the complex $\pi_s^+(\pi_t^-(W^\bullet))$ will be
\[ \scalemath{0.8}{ \dboxed{\alpha_s^\vee}\xrightarrow{\begin{pmatrix}
    -\alpha_s \\
    0 \\
    \alpha_s \\
    -\alpha_t
\end{pmatrix}}\sbrac{\rhunit(-2)}_1\oplus\sbrac{\dboxed{\alpha_s^\vee}(-3)\oplus \rhunit (-4)\oplus \dboxed{\alpha_s^\vee}(-3) }_2 \xrightarrow{\begin{pmatrix}
    0&  0 & 0 &0 \\
    0& \alpha_s & 0 &0 \\
     1& 0 & 1 &0 \\
     0&0 & 0 &0 \\
     -\alpha_t & 0 & 0 &\alpha_s \\
     0& -\alpha_t & 0 &0 \\
    0& \alpha_s & \alpha_t &\alpha_s \\
  0&   0 & \alpha_t &\alpha_s \\
\end{pmatrix}} \sbrac{\rhunit(-2)}_1\oplus \sbrac{\rhunit(-2)\oplus \paren{\rhunit\runitd(-3)\oplus \rhcup (-3) }\oplus \rhunit(-2) }_2\oplus \sbrac{\boxed{\alpha_s^\vee}(-1)\oplus \rhunit(-2)\oplus \rhunit(-2)  }_3} \]

\[\scalemath{0.9}{ \xrightarrow{\begin{pmatrix}
    0 & 0 & 0 & 0 & 0 & 0 & 0 &0 \\
    1 & 0 & 0 & -1 & 0 & 0 & 0 &0 \\
    0 & 0 & 0 &\alpha_s & 0 & 0 &0 & 0 \\
    -\alpha_t & 0 & 0 & 0 & 0 & 0 &0 & 0 \\
    0 & -\alpha_t & 0 & 0 & 0 & -\alpha_s & 0 & 0 \\
    0 & 1 & \alpha_t & 0 & 1 & 0 & -1 & 0 \\
    0 & 0 & 0 & \alpha_t & 0 & 0 & 0 & 0 \\
    0 & 0 & \alpha_t & -1 & 1 & 0 & 0 & -1 \\
    0 & 0 & 0 & \alpha_t+\alpha_s & 0 & 0 & 0 & 0 \\
    0 & 0 & 0 & 0 & 0 & \alpha_s & \alpha_t & -\alpha_t \\
\end{pmatrix}}\boxed{\sbrac{\rhunit \oplus\paren{ \rhunit\runitd(-1) \oplus \rhcup (-1) }\oplus \rhunit  }_1\oplus \sbrac{ \rhunit \oplus \paren{\rhunit \runitd (-1)\oplus \rhcup (-1)}\oplus \paren{\rhunit \runitd (-1) \oplus \rhcup (-1)} }_2\oplus \sbrac{\rhunit}_3}} \]
\[ \xrightarrow{\begin{pmatrix}
    -\alpha_t & 0 & 0 & 0 & 0 & 0&0 &0 &0 &0 \\
    1 & \alpha_t & 0 & 1 & 0 & 0 & 1 & 0 & 0 & 0 \\
    0 & 0 & \alpha_t & 0 & 0 & 0 & -\alpha_s & 0 & 0 & 0 \\
    0 & \alpha_t & -1 & 1 & 0 & 0 & 0 & 0 & 1 & 0 \\
    0 & 0 & \alpha_t +\alpha_s & 0 & 0 &0 & 0 &0 & -\alpha_s & 0\\
    0 & 0 & 0 & 0 & 1 & \alpha_t & -1 & -\alpha_t & 0 & 1\\
    0 & 0 & 0 & 0& 0  & 0 & \alpha_t+\alpha_s & 0 & -\alpha_t & 0
\end{pmatrix}} \sbrac{\rhunit(2)\oplus \paren{\rhunit \runitd (1)\oplus \rhcup (1)} \oplus \paren{\rhunit\runitd (1)\oplus \rhcup (1)}}_1 \oplus \sbrac{\rhunit \runitd(1)\oplus \rhcup(1) }_2  \]
\[ \xrightarrow{\begin{pmatrix}
    1 & \alpha_t & -1 & -\alpha_t & 0  & 0 &-1 \\
    0 & 0 &\alpha_t+\alpha_s & 0 & -\alpha_t  & 0 & \alpha_s
\end{pmatrix}} \rhunit\runitd(3) \oplus \rhcup(3)\]
The nonzero cohomology of $\pi_s^+(\pi_t^-(W^\bullet))$ will then be 
\[\scalemath{0.9}{\h^{T=-1}(\pi_s^+(\pi_t^-(W^\bullet))=\begin{bmatrix}
    0 & 0 & 0 & 0 & 1 & 0 & 1 & 1
\end{bmatrix}^T\kb(-2)=\kb (1), \qquad \h^{T=0}(\pi_s^+(\pi_t^-(W^\bullet))=\begin{bmatrix}
    0 & 0 & 0 & 0 & 1 & 0 & 0 & 0 & -1
\end{bmatrix}^T \kb=\kb(3)} \]
\[\h^{T=1}(\pi_s^+(\pi_t^-(W^\bullet))=\begin{bmatrix}
    0 & 0 & 1 & 0 & 1 & 0 & -1
\end{bmatrix}^T \kb (1)=\kb(5), \qquad \h^{T=2}(\pi_s^+(\pi_t^-(W^\bullet))= \paren{\rhcup} \kb(3)=\kb(7) \]
Now, because the RHS of \cref{twocomplex} has many contractible summands, we can remove them to yield the complex $R(1)\xrightarrow{\alpha_s+\alpha_t}\boxed{R(3)}$. Since $\pi_s^-(F_s^{-2})\simeq \boxed{R}$, it follows that
$$\pi_s^-(\pi_t^+(W^\bullet))\simeq R(1)\xrightarrow{\alpha_s+\alpha_t}\boxed{R(3)}$$ 
The nonzero cohomology of $\pi_s^+(\pi_t^-(W^\bullet))$ will then be
\[\h^{T=0}(\pi_s^-(\pi_t^+(W^\bullet)))= \frac{R(3)}{(\alpha_s+\alpha_t)} \]
It follows that the nonzero cohomology at $A=1$ will be
\[ \paren{\overline{\hhh}^{A=1}}^{T=-1}\cong\kb(1), \qquad \paren{\overline{\hhh}^{A=1}}^{T=0}\cong\kb(3)\oplus \frac{R(3)}{(\alpha_s+\alpha_t)}, \quad  \paren{\overline{\hhh}^{A=1}}^{T=1}\cong\kb(5), \quad  \paren{\overline{\hhh}^{A=1}}^{T=2}\cong\kb(7)  \]

Finally for $A=2$, the complex $\hh^1 \pi_t^+(W^\bullet)$ is homotopy equivalent to 
\[\dboxed{\alpha_s^\vee}(-1)\xrightarrow{\begin{pmatrix}
    \alpha_s+\alpha_t \\
    \alpha_s
\end{pmatrix}} \dboxed{\alpha_s^\vee}(1)\oplus \rhunit \xrightarrow{\begin{pmatrix}
    \alpha_s & -(\alpha_s+\alpha_t) \\
    0 & 0
\end{pmatrix}} \rhunit(2)\oplus \rhunit(2)\xrightarrow{\begin{pmatrix}
    0 & \alpha_s+\alpha_t
\end{pmatrix}}\boxed{\rhunit (4)} \]
and thus the nonzero cohomology at $A=2$ is given by
\[  \paren{\overline{\hhh}^{A=1}}^{T=-1}\cong \rhunit \kb(2)=\kb(5), \quad  \paren{\overline{\hhh}^{A=1}}^{T=0}\cong \rhunit R/(\alpha_s+\alpha_t)(4)= R/(\alpha_s+\alpha_t)(7)   \]

The total Poincare series for $\overline{\hhh}(F_s^{-2}F_tF_s^{-1}F_t)$ will then be
\begin{align*}
    \overline{\mathscr{P}(A, Q, T)}(F_s^{-2}F_tF_s^{-1}F_t)&=TQ^{-1}+T^2Q^{-3}+A\paren{T^{-1}Q^{-1} + Q^{-3}+\frac{Q^{-3}}{(1-Q^2)} +TQ^{-5}+T^2Q^{-7} } +A^2 \paren{ T^{-1}Q^{-5}+\frac{Q^{-7}}{(1-Q^2)}} 
\end{align*}

Substituting $A=-a^2q^2, Q=q, T=-1$ into $\overline{\mathscr{P}(A, Q, T)}(F_s^{-2}F_tF_s^{-1}F_t)$ and multiplying by the unit $q^{2}a^{-3}$, one readily verifies that this coincides with the HOMFLY–PT polynomial of $L5a1$ computed in \cite{LM}
\[ \frac{1}{vz} - \frac{v}{z} - \frac{z}{v^{3}} + \frac{2z}{v} - vz + \frac{z^{3}}{v} \]
after setting $z=q-q^{-1}$ and $v=a$.

\appendix

\begin{appendices}

\end{appendices}

\printbibliography

\end{document}